\definecolor{darkred}{rgb}{0.5,0,0}
\definecolor{darkgreen}{rgb}{0,0.5,0}
\definecolor{darkblue}{rgb}{0,0,0.5} 
\newcommand{\CC}{{\mathbb C}}
\newcommand{\RR}{{\mathbb R}}
\newcommand{\ZZ}{{\mathbb Z}}
\def\uu
\def\pp{\text {\rm \bf \c{p}}}
\def\cD{\mathcal{D}}
\def\cE{\mathcal{E}}
\def\cF{\mathcal{F}}
\def\cH{\mathcal{H}}
\def\cM{\mathcal{M}}
\def\cZ{\mathcal{Z}}
\def\mc{\mathfrak{c}}
\def\mi{\mathfrak{i}}
\def\mv{\mathfrak{v}}
\def\part{\partial}
\newcommand\id{\mathrm{Id} }
\newtheorem{theo}{Theorem}
\newtheorem{prop}[theo]{Proposition}
\newtheorem{lem}[theo]{Lemma}
\newtheorem{defi}[theo]{Definition}
\newtheorem{rem}[theo]{Remark}
\numberwithin{equation}{section}
\newcounter{numero}
\newcounter{numerob}
\newcounter{numerobb}
\title[Extended Magnetohydrodynamics]{ \bf The equations of \\
Extended Magnetohydrodynamics} 
\author[N. Besse]{Nicolas Besse}
\address[Nicolas Besse]{Observatoire de la C\^ote d'Azur,
               Bd de l'Observatoire CS 34229,
               06304 Nice Cedex 4, France}
\author[C. Cheverry]{Christophe Cheverry}
\address[Christophe Cheverry]{Institut Math\'ematique de Rennes, 
Campus de Beaulieu, 263 avenue du G\'en\'eral Leclerc CS 74205 
35042 Rennes Cedex\\FRANCE}
\begin{document}
\maketitle

\bigskip
\bigskip

\noindent \textbf{\small Abstract}. {\small Extended magnetohydrodynamics (XMHD) is a fluid plasma model \cite{GP04}
generalizing ideal MHD by taking into account the impact of Hall drift effects \cite{Light} and the influence of electron inertial 
effects \cite{Lus59}. XMHD has a Hamiltonian structure which has received over the past ten years a great deal of attention 
among physicists \cite{AKY15,Chari,DML16,LMM16,MLM17}, and which is embodied by a non canonical Poisson algebra 
on an infinite-dimensional phase space. XMHD can alternatively be formulated as a nonlinear evolution equation. Our aim 
here is to investigate the corresponding Cauchy problem. We consider both incompressible and compressible versions of 
XMHD with, in the latter case, some additional bulk (fluid) viscosity. In this context, we show that XMHD can be recast as a 
well-posed symmetric hyperbolic-parabolic system implying pseudo-differential operators of order zero acting as coefficients 
and source terms. Along these lines, we can solve locally in time the associated initial value problems, with moreover a minimal 
Sobolev regularity. We also explain the emergence and propagation of inertial waves \cite{ALM16,MLM17}.}

\bigskip
\medskip

\noindent \textbf{\small Keywords}. {\small Hyperbolic-parabolic symmetric systems of conservation laws; Initial value problem for nonlinear systems 
of PDEs; Partially elliptic systems; Compressible and incompressible fluid mechanics; 
Plasma physics; Hall, Inertial and Extended Magnetohydrodynamics; Pseudo-differential operators; Weyl quantization.}

\bigskip
\medskip

{\small \parskip=1pt
\setcounter{tocdepth}{1}
\tableofcontents
}

\medskip

$ \, $

\break


\section{Introduction} \label{Introduction} Extended magnetohydrodynamics (XMHD in abbreviated form) is a system of nonlinear evolution 
equations in the $ (3+1) $-dimensional spacetime $ \RR^3_\text{x} \times \RR_t $, which was first initiated by L\"ust \cite{Lus59}. It can 
be obtained \cite{AKY15} from a two-fluid model (electrons plus ions), under the assumptions of quasi-neutrality and smallness of the 
electron mass compared to the ion mass, by imposing an auxiliary ordering on the equations of motion; it can be recovered from kinetic 
theory \cite{JJM}; or, starting with a Lagrangian picture carried by some adequate two-fluid functional, it can be derived from action 
principles \cite{Chari,KMM17}. 

\smallskip

\noindent Extended MHD can be formulated as a (non-canonical) Hamiltonian system \cite{AKY15,DML16} which subsumes ideal MHD, 
Hall MHD, as well as inertial MHD models. It is equipped with a non canonical Poisson bracket \cite{DML16}, a conserved energy \cite{KimuM}, 
Casimir invariants and topological properties which are investigated in \cite{GTAM,LMM16} and references therein. 

\smallskip

\noindent Extended MHD is motivated by its great importance in astrophysics and geophysics. It has proven to be useful in several contexts, 
like solar wind \cite{ALM16} and neutron stars \cite{AY16}. It is also driven by nuclear fusion science since the Hall effect and the electron 
inertia are currently identified \cite{GTAM,Hossen} as potential reconnection mechanisms in collisionless plasmas. Now, reconnection is a 
dynamical process. Hence, the importance of developing the Eulerian approach in parallel to the aforementioned Lagrangian viewpoint.
This is precisely the position of this article, namely to explain how the XMHD evolution equations can be solved starting from initial data.

\smallskip

\noindent Normalizing variables in the standard Alfv\'en units, with the convention $ \nabla \equiv \nabla_\text{x} $, extended MHD is built with the continuity 
equation (on the total mass density $ \rho $ and the center-of-mass velocity $ \text{v} $),
\begin{equation}\label{eqcontinuityequation}   
\part_t \rho + \nabla \cdot (\rho \, \text{v}) = 0 \, , 
\end{equation}
and with the equation for the momentum density 
\begin{equation}\label{eqmomentumdensity}   
\rho \ \bigl(\part_t \text{v} + (\text{v} \cdot \nabla) \text{v} \bigr) + \nabla \text{p} - \jmath \times \text{B} + d_e^2 \ (\jmath \cdot \nabla) 
(\jmath / \rho) = 0 \, , 
\end{equation}
where $ \text{p} : \RR_+ \rightarrow \RR $ is a smooth function of $ \rho $ representing a pressure, $ \text{B} $ is the magnetic field and $\jmath $ is 
the current density. The two equations (\ref{eqcontinuityequation}) and (\ref{eqmomentumdensity}) must be completed with the Maxwell-Amp\`ere 
equation (where the displacement current $ \part_t \text{E} $ can be dropped under the assumption that our system is not relativistic) 
\begin{equation}\label{Maxwell-Ampere}  
 \jmath = \nabla \times \text{B} \, , 
 \end{equation}
with the Maxwell--Faraday equation
\begin{equation}\label{Maxwell-Faraday}   
\partial_t \text{B} + \nabla \times \text{E} = 0\,, 
 \end{equation}
and with a generalized Ohm's law \cite{KimuM}, which gives the electric field $ \text{E} $ in terms of the other unknowns $\rho $, $ \text{v} $, 
$ \text{B} $, $ \jmath $ and the electron pressure $ \text{p}_e $ according to (see \cite{AKY15,GTAM})
\begin{equation}\label{ohms}   
 \begin{array}{rl}
\text{E}+ \text{v} \times \text{B} = \! \! \! & \displaystyle  - \, \frac{d_i}{\rho} \ \nabla \text{p}_e + d_i \ \frac{\jmath}{\rho} \times \text{B} - 
d_i \ d_e^2 \ \Bigl( \frac{\jmath}{\rho} \cdot \nabla \Bigr) \frac{\jmath}{\rho} \smallskip \\
\ & \displaystyle  + \, d_e^2 \ \Bigl \lbrack \part_t \Bigl( \frac{\jmath}{\rho} \Bigr) + (\text{v}\cdot \nabla) \Bigl( \frac{\jmath}{\rho} \Bigr) + 
\Bigl( \frac{\jmath}{\rho} \cdot \nabla \Bigr) \text{v} \Bigr \rbrack \, .
\end{array} 
\end{equation}
The above two dimensionless parameters $ d_e $ and $ d_i $ are independent. They are non-negative ($ d_e \geq 0 $ and $ d_i \geq 0 $). 
They represent respectively the normalized \underline{e}lectron and \underline{i}on skin depths. In practice  (see Remark \ref{comparison}), 
they are often found to be adjusted in such a way that $ 0 \leq d_e \leq d_i \ll 1 $. Knowing that, the relation (\ref{ohms}) appears clearly as 
a perturbation of the ideal Ohm's law ($ \text{E}+ \text{v} \times \text{B} = 0 $).

\smallskip

\noindent In Subsection \ref{Mathematicalbackground}, we recall the state of knowledge concerning the mathematical results about ideal, Hall and 
extended MHD. In Subsections \ref{idealsituationIntroduction} and \ref{compressibleframeworkIntroduction}, we present our main outcomes concerning 
respectively the incompressible and compressible frameworks. This is also an occasion to outline the plan of the text and to emphasize some key 
ideas.


\subsection{Mathematical background} \label{Mathematicalbackground} Exploiting (\ref{Maxwell-Ampere}), we can express $ \jmath $ in terms 
of $ \text{B} $, and replace $ \jmath $ inside (\ref{eqmomentumdensity}) and (\ref{ohms}) accordingly. Then, we can substitute the electric field 
$ \text{E} $ thus obtained at the level of (\ref{Maxwell-Faraday}). When doing this, we can collect the quantities which undergo a time derivative,
namely
$$ \part_t \text{B} + d_e^2 \ \nabla \times \part_t \Bigl( \frac{\jmath}{\rho} \Bigr) = \part_t \Bigl \lbrack \text{B} + d_e^2 \ \nabla \times \Bigl( 
\frac{\nabla \times \text{B}}{\rho} \Bigr)  \Bigr \rbrack \, . $$
By this way, the expression
\begin{equation}\label{lienBB*inideb}  
\text{B}^* = \text{B} + d_e^2 \ \nabla \times \bigg( \frac{\nabla \times \text{B}}{\rho} \bigg) \, .
\end{equation}
acquires the status of a dynamical variable. The notation $ \text{B}^* $ is very common \cite{AKY15,GP04,LMM16}. From there, the unknowns 
are $ \rho $, $ \text{v} $ and $ \text{B}^* $, while the {\it constitutive relation} (\ref{lienBB*inideb}) is aimed to express $ \text{B} $ in terms of 
$ \text{B}^* $. After some calculations, or see directly the three equations (1)-(5)-(6) in \cite{LMM16}, we obtain the version of XMHD which is 
highlighted in \cite{AKY15,DML16,LMM16} and which is delivered in the form
\begin{equation}\label{compressibledeb}   
\left \lbrace \begin{array}{l}
\part_t \rho + (\text{v} \cdot \nabla )\rho + \rho \ \nabla \cdot \text{v} = 0 \, , \medskip \\
\displaystyle \part_t \text{v} + (\text{v} \cdot \nabla ) \text{v} + \frac{\nabla \text{p}}{\rho} + \text{B}^* \times \frac{\nabla \times \text{B}}{\rho} 
+ d_e^2 \ \nabla \bigg( \frac{ \vert \nabla \times \text{B} \vert^2}{2 \, \rho^2} \bigg) = \nu \ d_e^2 \ \nabla (\nabla \cdot \text{v})  \, , \medskip\\
\displaystyle \part_t \text{B}^* + \nabla \times \bigg( \text{B}^* \times \Bigl(\text{v} - d_i \, \frac{\nabla \times \text{B}}{\rho} \Bigr) \bigg) 
+ d_e^2 \ \nabla \times \bigg( (\nabla \times \text{v}) \times \frac{\nabla \times \text{B}}{\rho} \bigg) = 0 \, .
\end{array} \right. 
\end{equation}
In physics textbooks, these equations are often supplemented by
\begin{equation}\label{oublidivBB*}  
\nabla \cdot \text{B}^* = 0 \, , \qquad \nabla \cdot \text{B} = 0 \, .
\end{equation}
The equations (\ref{lienBB*inideb}) and (\ref{compressibledeb}) are derived (for $ \nu = 0 $)  in the contributions 
\cite{AKY15,ALM16,AY16,DML16,KimuM,LMM16} which mainly focus on the Hamiltonian formalism while the Eulerian approach is not really 
addressed. Individually, the equation (\ref{compressibledeb}) does not fall into usual mathematical categories and its well-posedness does not 
appear to have been clarified. In fact, the system (\ref{lienBB*inideb})-(\ref{compressibledeb}) looks like a quasilinear equation with various 
second order terms whose different roles need to be identified. The part $ \nu \ d_e^2 \ \nabla (\nabla \cdot \text{v}) $ where $ \nu > 0 $ represents 
a bulk (fluid) viscosity. It clearly provides some partial ellipticity on the component $ \text{v} $, namely a control on $ \nabla \cdot \text{v} $. But the 
other (nonlinear) second order terms (which are driven by $ d_e \geq 0 $ and $ d_i \geq 0 $) do not. Let us consider what can be said about the 
influence of $ d_e $ and $ d_i $.

\smallskip

\noindent First, assume that $ d_e = 0 $. Then, from (\ref{lienBB*inideb}), we deduce that $ \text{B}^* = \text{B} $, and two situations may be 
distinguished. For $ d_i = 0 $, we recover the equations of compressible MHD \cite{Maj84}, whose theory is today (almost) completed. 
For $d_i>0$, we incorporate the {\it Hall current term} coming from the third equation of (\ref{compressibledeb}) which (for $ \rho \equiv 1 $) 
reduces to the contribution $ d_i \, \nabla \times \bigl( (\nabla \times \text{B}) \times \text{B} \bigr) $. In particular, when $ \rho \equiv 1 $ and 
$ \nabla \cdot \text{v} = 0 $, we find the incompressible Hall-MHD system which has been introduced by Lighthill \cite{Light}. 

\smallskip

\noindent The situation $ d_e = 0 $ has been much studied by mathematicians in recent years, see for instance \cite{Chae,Dai,LiuTan,Ye}. This 
has been achieved in the presence of dissipative terms, namely a shear (fluid) viscosity ($ \mu \, \Delta \text{v} $ with $ \mu > 0 $) and/or a magnetic 
resistivity ($\eta\, \Delta \text{B} $ with $ \eta > 0 $). As soon as $ \eta > 0 $, the second order terms with $ d_i $ in factor can be absorbed, and the 
system becomes locally well-posed. But when $ \eta = 0 $, Hall-MHD equations are today known to be strongly ill-posed \cite{ChaeW}, even in Gevrey 
spaces \cite{JO}, and even if some kinematic viscosity $ \mu \, \Delta \text{v} $ with $ \mu > 0 $ is added. 

\smallskip

\noindent In this text, as prescribed by physicists \cite{AKY15,DML16,KimuM,LMM16,Lus59}, we work with $d_e>0$. This passage from $ d_e = 0 $ 
to $d_e>0$ is very significant since it allows to include {\it inertial effects} that are fundamental in plasma dynamics. We make progress in two directions:

\smallskip

\noindent $ \bullet $ Looking at the content of (\ref{compressibledeb}), this improvement (from $ d_e = 0 $ to $ d_e > 0 $) is already quite an achievement. 
Indeed, the situation $ d_e > 0 $ seems more complicated: the symmetric part of ideal MHD is broken (since $ \text{B} $ is substituted for $ \text{B}^* $); the 
Hall term (with its potential instabilities \cite{ChaeW,JO}) is still present; and there are extra nonlinear second order terms without evident sign conditions. 
Clearly, supplementary derivative losses may be expected, while the introduction of $ d_e $ does not furnish any dissipation. That is probably why the 
Cauchy problem associated with (\ref{compressibledeb}) has not yet (to our knowledge) been investigated.

\smallskip

\noindent $ \bullet $ In line with the preceding mathematical approaches, we use a touch of dissipation. We impose a bulk (fluid) viscosity $ \nu \, d_e^2 > 0 $. 
This condition is not demanding. In particular, it disappears when the flow is incompressible. The key highlight is, unlike \cite{Chae,Dai,LiuTan,Ye},  the absence 
of shear (fluid) viscosity ($ \mu \, \Delta \text{v} $ with $ \mu > 0 $) and magnetic resistivity ($\eta\, \Delta \text{B} $ with $ \eta > 0 $). This means that the Hall 
instabilities \cite{ChaeW,JO} can (locally in time) be compensated by inertial effects ($d_e>0$) without resorting to such additional dissipative terms. 

\smallskip

\noindent The question is why$ \, $? Our claim is that (\ref{compressibledeb}) becomes locally well-posed once $ d_e > 0 $ and (in the compressible case) 
once $ \nu > 0 $ for the following two principal reasons:
\begin{itemize}
\item [-] {\it About the influence of $ d_e > 0 $}. The analysis of derivative losses (when $ d_e = 0 $) does not include the constitutive relation 
(\ref{lienBB*inideb}) doing everything completely differently by modifying the role of $ \text{B} $ from $ \text{B} \equiv \text{B}^* $ (when 
$ d_e =0 $) to some another $ \text{B} \not \equiv \text{B}^* $ (when $ d_e > 0 $) with a gain of derivatives. Despite appearances, by a change 
of unknowns, the system (\ref{compressibledeb}) can be recast as a (foliation of) well-posed hyperbolic-parabolic systems (whose coefficients 
and source terms take the form of zero order pseudo-differential operators). In so doing, the inertial terms (those with $ d_e > 0 $ in factor) 
contribute to some (almost) symmetric structure, involving completely new features. In this interpretation, they do not provide second order 
dissipative perturbations. Instead, they contribute to the appearance of  inertial waves. 
\item [-] {\it About the influence of $ \nu > 0 $}. The introduction of a volume (fluid) viscosity (in place of a magnetic resistivity) is sufficient (and 
seems also necessary as in other contexts \cite{Masmou}) to absorb (for reasons like in \cite{KS}) the problematic contributions that remain 
in the compressible framework, when performing energy estimates.
\end{itemize}

\noindent As a consequence:
\begin{itemize}
\item [-] We will involve changes of variables that become singular when $ d_e \in \RR_+^* $ goes to zero. Throughout the text, it is therefore 
essential to work with $ d_e > 0 $. Keep in mind that there is no smooth passage from the case $ d_e > 0 $ to the case $ d_e = 0 $. For instance, 
in (\ref{lienBB*inideb}), $ \text{B} $ is expressed in terms of $ \text{B}^* $ through a (partially) elliptic operator which will prove to be (on some 
appropriate subspace) of order $ -2 $ when $ d_e > 0 $, and of order $ 0 $ when $ d_e = 0 $. 
\item [-] It is essential to assume that $ \nu > 0 $ when dealing with the compressible framework. 
\end{itemize}

\noindent Knowing that $ d_e > 0 $, we can prefer a rescaled version of (\ref{compressibledeb}) that makes us forget the role
of $ d_e $. To this end, we multiply $ \text{x} $, $ \text{v} $, $ \text{B}^* $ and $ \text{B} $ by $ d_e^{-1} $, while $ \text{p} $ is 
multiplied by $ d_e^{-2} $. In other words, we work with
$$ d := \frac{d_i}{d_e} \, , \qquad x := \frac{\text{x}}{d_e} \, , \qquad v := \frac{\text{v}}{d_e} \, , \qquad B^* := \frac{\text{B}^*}{d_e} \, , \qquad B := 
\frac{\text{B}}{d_e} \, , \qquad p := \frac{\text{p}}{d_e^2} \, . $$
With these conventions, we find
\begin{equation}\label{compressible}   
\left \lbrace \begin{array}{l}
\part_t \rho + (v\cdot \nabla) \rho + \rho \ \nabla \cdot v = 0 \, , \medskip \\
\displaystyle \part_t v + (v\cdot \nabla) v + \frac{\nabla p}{\rho} + B^* \times \frac{\nabla \times B}{\rho} + \nabla \bigg( 
\frac{ \vert \nabla \times B \vert^2}{2 \, \rho^2} \bigg) = \nu \ \nabla (\nabla \cdot v)  \, , \medskip\\
\displaystyle \part_t B^* + \nabla \times \bigg( B^* \times \Bigl(v - d \, \frac{\nabla \times B}{\rho} \Bigr) \bigg) + \nabla
\times \bigg( (\nabla \times v) \times \frac{\nabla \times B}{\rho} \bigg) = 0 \, , 
\end{array} \right. 
\end{equation}
together with 
\begin{equation}\label{lienBB*ini}  
B^* = B + \nabla \times \bigg( \frac{\nabla \times B}{\rho} \bigg) \, .
\end{equation}
Let $ \bar \rho \in \RR_+^* $ be a 
constant positive background density. At the initial time $ t = 0 $, we impose
\begin{equation}\label{inidata}  
\qquad (\rho, v,B^*) (0,\cdot) = (\bar \rho + \rho_0, v_0,B^*_0) \, . 
\end{equation}
We work away from vacuum, say with
\begin{equation}\label{awayvacuum}  
0 < \bar \rho /2 \leq \bar \rho + \rho_0 (x) \, . 
\end{equation}
Note that the parameter $ d_e $ is no more visible at the level of (\ref{compressible})-(\ref{lienBB*ini}). It is in fact hidden behind the 
definition of $ d $ and behind the preceding change of scales. It keeps of course some influence. Indeed, let $ (\rho, v,B^*) $ be a 
solution to (\ref{compressible})-(\ref{lienBB*ini}). We can adjust $ d_i $ in such a way that $ d_i = d \, d_e $ for a fixed $ d \geq 0 $,
and consider that $ d_e $ can vary. Coming back to the initial variables, we find that 
\begin{equation}\label{rescaledversion}  
(\rho, \text{v} , \text{B}^*) (t, {\rm x}) := (\rho,d_e \, v,d_e \, B^*) (t, {\rm x} / d_e) \, , \qquad d_e \in ]0,1] \, , 
\end{equation}
is a family of solutions to (\ref{compressibledeb}) which belongs (when $ d_e > 0 $ goes to $ 0 $) to a perturbative concentrating regime 
(the periodic regime will not be investigated here) near the constant solution $ (\bar \rho,0,0) $. Indeed, the velocity and magnetic 
components ($ \text{v} $ and $ \text{B}^* $) are of small amplitude $ d_e $ while the profiles are in $ \cH^s $ (and thus they are 
decreasing functions, typically compactly supported). Retain that $ d_e $ has a significant impact at the level of (\ref{compressibledeb}), 
when looking at $ (\rho, \text{v} , \text{B}^*)  $. However, $ d_e $ will not be apparent in our statements (which are uniform with respect to 
$ d_e $) since they are formulated in terms of (\ref{compressible})-(\ref{lienBB*ini}). 

\smallskip

\noindent As usual, we denote by $\cF $ the Fourier transform and by 
$$ \cH^s := \cF^{-1} \bigl( \langle \xi \rangle^s \, \cF \,L^2 \bigr) \, , \qquad s \in \RR \, , \qquad \langle \xi \rangle:=(1+|\xi|^2)^{1/2} \, , $$ 
the standard Sobolev--Bessel potential space. 


\subsection{The incompressible situation} \label{idealsituationIntroduction} This entails looking at the pressure $ p : \RR_+ \rightarrow \RR $ 
as a scalar function that plays the role of a Lagrange multiplier. This also requires to start with initial data as in (\ref{inidata}) with $ \rho_0 = 0 $ 
as well as 
\begin{equation}\label{inidataindivfree}  
\nabla \cdot v_0 = 0 \, , \qquad \nabla \cdot B_0^* = 0 \, .
\end{equation}
Equivalently (see Subsection \ref{ideal equations}), the incompressible situation implies that:
\begin{itemize}
\item [i)] The density $ \rho $ is a positive constant, say (without limiting the generality)
\begin{equation}\label{rhoconstant1}  
\rho = \bar \rho = 1 \, .
\end{equation}
\item [ii)] All the vector fields $ v $, $ B^* $ and $ B $ are solenoidal. They belong to $\cD^s$ with
$$ \cD^s := \bigl \lbrace \, D \in \cH^s (\RR^3;\RR^3) \, ; \, \nabla \cdot D = 0 \, \bigr \rbrace \, . $$
In other words, we work with
\begin{equation}\label{divfreeini}  
\nabla \cdot v = 0 \, , \qquad \nabla \cdot B^* = 0 \, , \qquad \nabla \cdot B = 0 \, .
\end{equation}
\item [iii)] The set (\ref{compressible}) of equations reduces to
\begin{equation}\label{syssimpli}  
\left \lbrace \begin{array}{l}
  \displaystyle \part_t v + (v\cdot \nabla ) v + \nabla p + B^* \times (\nabla \times B) = 0 \, , \medskip \\
\displaystyle \part_t B^* + \nabla \times \bigl( B^* \times (v - d \,\nabla \times B) \bigr) + \nabla \times \bigl( (\nabla \times v) 
\times (\nabla \times B ) \bigr) = 0 \, .
\end{array} \right. 
\end{equation}
\item [iv)] The constitutive relation is replaced by
\begin{equation}\label{lienBB*2}  
  B = (\id-\Delta)^{-1} B^* \, .
\end{equation}
\end{itemize}

\begin{theo}\label{wellposednessideal} [Local smooth wellposedness for incompressible XMHD] Fix the initial data such that
\begin{equation}\label{reginidatain}  
(v,B^*) (0,\cdot) = (v_0,B^*_0) \in \cD^{s} (\RR^3;\RR^3) \times \cD^{s-1} (\RR^3;\RR^3) \, , \qquad s > 5/2  \, .
\end{equation}
Then, we can find some time $ T > 0 $ depending only on the $ \cH^s \times \cH^{s-1} $-norm of $ (v_0,B^*_0) $ such that the Cauchy 
problem built with (\ref{divfreeini})-(\ref{syssimpli})-(\ref{lienBB*2}) together with the initial condition (\ref{reginidatain}) has a 
unique local solution on $ [0,T] $, which is smooth in the following sense
\begin{equation}\label{reginidatainpropa}   
(v,B^*,B) \in  C \bigl( [0,T] ; \cD^s (\RR^3;\RR^3) \times \cD^{s-1} (\RR^3;\RR^3) \times \cD^{s+1} (\RR^3;\RR^3)\bigr) \, . 
\end{equation}
\end{theo}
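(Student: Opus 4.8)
The plan is to exploit the smoothing built into the constitutive relation (\ref{lienBB*2}), recast the incompressible system (\ref{divfreeini})--(\ref{syssimpli})--(\ref{lienBB*2}) as a quasilinear system of symmetric hyperbolic type modulo pseudo-differential operators of order zero, and then run the classical programme: a priori estimates, a regularised scheme, compactness, uniqueness, and strong time-continuity. Concretely, since $B=(\id-\Delta)^{-1}B^{*}$, the map $B^{*}\mapsto B$ is elliptic of order $+2$: from $B^{*}\in\cD^{s-1}$ one gets $B\in\cD^{s+1}$ and, crucially, the current $j:=\nabla\times B=(\id-\Delta)^{-1}\nabla\times B^{*}\in\cD^{s}$ sits at the \emph{same} level as $v$, while $B^{*}=B+\nabla\times j$ is recovered from $j$ by two derivatives. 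Using $B^{*}\times(\nabla\times B)=-(B\cdot\nabla)B+(j\cdot\nabla)j+\tfrac12\nabla(|B|^{2}-|j|^{2})$ and the solenoidal identity $\nabla\times(a\times c)=(c\cdot\nabla)a-(a\cdot\nabla)c$, the system becomes
\begin{equation*}
\part_{t}v+(v\cdot\nabla)v+\nabla\tilde p=(B\cdot\nabla)B-(j\cdot\nabla)j\,,\qquad \tilde p:=p+\tfrac12|B|^{2}-\tfrac12|j|^{2}\,,
\end{equation*}
\begin{equation*}
\part_{t}B^{*}+(w\cdot\nabla)B^{*}+(j\cdot\nabla)(\nabla\times v)=\cR(v,B^{*})\,,\qquad w:=v-d\,j\,,
\end{equation*}
where $\tilde p$ is the Lagrange multiplier that keeps $v$ solenoidal, $(B\cdot\nabla)B$ is a harmless source (it lies in $\cD^{s}$ because $B$ is one derivative smoother than $v$), $w\cdot\nabla$ is a transport operator with $W^{1,\infty}$ coefficient ($w\in\cD^{s}$, $s>5/2$), and $\cR=(B^{*}\cdot\nabla)w+((\nabla\times v)\cdot\nabla)j$ — which absorbs the whole Hall effect beyond its share of $w$ — acts as a zero-order source, i.e. is bounded in $\cD^{s-1}$ by $\lesssim(\|v\|_{\cH^{s}}+\|B^{*}\|_{\cH^{s-1}})^{2}$. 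The only genuinely derivative-losing terms are $(j\cdot\nabla)j$ and $(j\cdot\nabla)(\nabla\times v)$.

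The crux is the a priori estimate $\tfrac{d}{dt}\bigl(\|v\|_{\cH^{s}}^{2}+\|B^{*}\|_{\cH^{s-1}}^{2}\bigr)\le\Phi\bigl(\|v\|_{\cH^{s}}+\|B^{*}\|_{\cH^{s-1}}\bigr)$ for a continuous $\Phi$, which yields a life-span $T$ depending only on the data norm in (\ref{reginidatain}). All transport terms are handled with Bony's paradifferential calculus, so that only $L^{\infty}$ bounds on first derivatives enter (this is what makes $s>5/2$ sufficient, rather than $s>7/2$): the paraproduct part $T_{w}\cdot\nabla$ is essentially skew-adjoint, its commutator with $\langle D\rangle^{s}$ or $\langle D\rangle^{s-1}$ gains a derivative and is bounded by $\|\nabla w\|_{L^{\infty}}\lesssim\|w\|_{\cH^{s}}$, and the complementary paraproduct/remainder is a zero-order source; the pressure drops by solenoidality, and $(B\cdot\nabla)B$ and $\cR$ are controlled by tame Moser estimates. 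The heart of the matter is the derivative-losing pair $-(j\cdot\nabla)j$ (momentum) and $(j\cdot\nabla)(\nabla\times v)$ (induction): writing $B^{*}=\nabla\times j+B$ with $B$ smoother, integrating by parts, moving $\nabla\times$ and $j\cdot\nabla$ across the $L^2$ pairing and commuting a factor $\langle D\rangle^{\pm1}$ past $j\cdot\nabla$ (every commutator being order zero, controlled by $\|\nabla j\|_{L^{\infty}}\lesssim\|j\|_{\cH^{s}}$), their top-order contributions to the energy identity both collapse to $\pm\int(j\cdot\nabla)(\langle D\rangle^{s}j)\cdot(\langle D\rangle^{s}v)\,dx$, and cancel since $\nabla\cdot j=0$. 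In symbol language the two coupling operators share the principal part $\langle\xi\rangle^{-1}(j\cdot\xi)$ times the real skew-symmetric matrix $\xi\times(\cdot)$, hence are mutually adjoint modulo order zero: this is precisely the (pseudo-differential) symmetric-hyperbolic structure announced in the Introduction, in which the electron inertia ``pays back'' the loss produced by the modified Lorentz force, exactly as $(B\cdot\nabla)v$ does for $(B\cdot\nabla)B$ in ideal MHD.

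With this estimate available, existence follows from a Friedrichs-regularised scheme: replacing $v,B^{*}$ by $J_{\eps}v,J_{\eps}B^{*}$ inside the nonlinearities (with $J_{\eps}$ a self-adjoint mollifier commuting with $\langle D\rangle^{\sigma}$, hence not disturbing the cancellation above) turns the system into an ODE in $\cD^{s}\times\cD^{s-1}$; the solutions obey the uniform bound of the previous step on a common $[0,T]$, $\{\part_{t}v^{\eps},\part_{t}B^{*,\eps}\}$ is bounded in $\cD^{s-1}\times\cD^{s-2}$, and Aubin--Lions--Simon gives a subsequence converging in $C([0,T];\cD^{s'}_{\mathrm{loc}}\times\cD^{s'-1}_{\mathrm{loc}})$ for every $s'<s$, enough to pass to the limit in all the (para)products; the limit solves (\ref{divfreeini})--(\ref{syssimpli})--(\ref{lienBB*2}), lies in $L^{\infty}([0,T];\cD^{s}\times\cD^{s-1})$ by weak-$*$ lower semicontinuity, and propagates the constraints (\ref{divfreeini}) because the $v$-equation is Leray-projected and the $B^{*}$-equation is in curl form. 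Uniqueness comes from an energy estimate for the difference of two solutions in the low norm $L^{2}\times\cH^{-1}$ (so that $\delta B=(\id-\Delta)^{-1}\delta B^{*}\in\cH^{1}$ and $\delta j\in L^{2}$ match $\delta v\in L^{2}$), whose coefficients are $W^{1,\infty}$ since $s>5/2$, combined with Gr\"onwall and the same skew-symmetry. Finally, strong time-continuity follows by the standard argument — weak continuity in $\cD^{s}\times\cD^{s-1}$ together with continuity of $t\mapsto\|v(t)\|_{\cH^{s}}^{2}+\|B^{*}(t)\|_{\cH^{s-1}}^{2}$ (the a priori estimate is reversible in time, and a Bona--Smith regularisation of the data controls the $\liminf$) — and then $B=(\id-\Delta)^{-1}B^{*}\in C([0,T];\cD^{s+1})$ is automatic, which gives (\ref{reginidatainpropa}).

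The step I expect to be the main obstacle is the energy estimate: one must verify that the inertial ($d_{e}>0$) and Hall ($d$) second-order terms genuinely assemble into the skew-symmetric structure above, so that no residual derivative loss survives beyond the one that cancels, and one must do this at the borderline regularity $s>5/2$ — which forces systematic use of paradifferential (Weyl) quantization and a careful accounting of which remainders are truly order zero with coefficients bounded by $\cH^{s}$-norms. The elliptic bookkeeping of $(\id-\Delta)^{-1}$ and of the Leray projector, and the degeneracy of the change of unknowns as $d_{e}\to0$, are comparatively routine but must be tracked throughout.
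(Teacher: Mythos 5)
Your argument is correct in substance and rests on the same structural observation that drives the paper --- the constitutive relation \eqref{lienBB*2} makes $j:=\nabla\times B=(\id-\Delta)^{-1}\nabla\times B^{*}$ one derivative smoother than $B^{*}$, and the two order-one couplings $-(j\cdot\nabla)j$ (momentum) and $(j\cdot\nabla)(\nabla\times v)$ (induction) are skew-symmetric partners whose top-order contributions cancel in the energy --- but your implementation is genuinely different from the paper's. You run a direct estimate on $(v,B^{*})$ in the mismatched spaces $\cH^{s}\times\cH^{s-1}$, shifting a factor $\langle D\rangle^{\pm1}$ across the $L^{2}$ pairing and paying with paradifferential and commutator bookkeeping at the borderline regularity (your paraproduct remainders of the type $T_{\nabla B^{*}}w$ land in $\cH^{2s-7/2}\subset\cH^{s-1}$ precisely because $s>5/2$). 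The paper instead equalizes the regularity levels: first the vorticity formulation \eqref{syssimplinet} on $(w,B^{*})=(\nabla\times v,B^{*})$, a symmetric quasilinear system with zero-order pseudo-differential coefficients, which only yields the theorem for $s>7/2$; then, for the stated threshold $s>5/2$, the potential formulation \eqref{syssimpliidealpot} on $(v,A^{*})$ with $\nabla\times A^{*}=B^{*}$, where both unknowns sit in $\cH^{s}$ and the cancellation is obtained by plain $L^{2}$ estimates on the linearized system via the $\mathscr T_{C}$/solenoidality mechanism of Remark \ref{yeti} (a Kawashima--Shizuta-type structure), at the price of introducing the potential, the extra multiplier $e$, and the consistency check of Lemma \ref{potpotideal}. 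Your route buys a self-contained proof on the physical variables; the paper's buys essentially $L^{2}$-level estimates with no delicate commutators at mismatched levels. Two small points to tighten in your write-up: the two coupling blocks are each skew-adjoint modulo order zero and equal modulo order zero, so one is \emph{minus} the adjoint of the other (this is exactly what your displayed cancellation uses, so the phrase ``mutually adjoint'' should be adjusted); and since your induction equation is written in transport rather than curl form, the propagation of $\nabla\cdot B^{*}=0$ should be verified by taking the divergence of that equation (which gives $\part_{t}(\nabla\cdot B^{*})+(w\cdot\nabla)(\nabla\cdot B^{*})=0$, using $\nabla\cdot w=\nabla\cdot j=0$), rather than invoked from the curl form you no longer solve directly.
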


\noindent In (\ref{reginidatainpropa}), the level $ \cH^s $ of regularity for $ v $ does not match with the one $ \cH^{s-1} $ obtained for the dynamical 
variable $ B^* $. This is because the presentation (\ref{syssimpli}), though inherited from physics, is not suitable from the perspective 
of initial value problems. This explains probably why things have not yet worked in this way. To remedy this, we transform in 
Section \ref{vorcasincompressible} the equations of (\ref{syssimpli}). More precisely, we incorporate a new equation on the 
vorticity $ w := \nabla \times v $ in order to obtain a system on $ (w,B^*) $ called the {\it vorticity formulation}. Then, we derive energy 
estimates up to the proof of Theorem \ref{wellposednessideal} (for $ s > 7/2 $).


\subsection{The compressible framework} \label{compressibleframeworkIntroduction} We present below our result concerning
(\ref{compressible})-(\ref{lienBB*ini}). As a prerequisite, we assume a barotropic equation of state. In other words, the pressure 
$ p $ is only a function of the density $ \rho $. It is prescribed by a smooth given function $ p : \RR_+ \rightarrow \RR $ whose 
derivative $ p' $ is positive. 

\begin{theo}[Local smooth wellposedness for compressible XMHD] \label{theoprin} Assume that $ \nu > 0 $, and fix any $ s > 5/2 $.
Select some initial data as in (\ref{inidata})-(\ref{awayvacuum}), with moreover 
\begin{equation}\label{reginidata}  
\quad ( \rho_0 ,  v_0 , B_0^* ) \in \cH^s (\RR^3;\RR) \times \cH^s (\RR^3;\RR^3) \times \cH^{s-1} (\RR^3;\RR^3) \, , \qquad \nabla \cdot B_0^* 
\in \cH^{s} (\RR^3;\RR^3) \, .
\end{equation}
Then, we can find some time $ T > 0 $ which is proportional to the parameter $ \nu $ and inversely proportional to the $ \cH^s \times \cH^s \times \cH^{s-1} 
\times \cH^s $-norm of $ (\rho_0,v_0,B^*_0, \nabla \cdot B^*_0) $ such that the Cauchy problem built with (\ref{compressible})-(\ref{lienBB*ini}) 
together with (\ref{inidata})-(\ref{awayvacuum})-(\ref{reginidata}) has a unique local solution on $ [0,T] $, which is smooth in the following sense 
\begin{equation}\label{sensesmooth}  
 \quad (\rho,v,B^*, B) \in C \bigl([0,T];\cH^s (\RR^3;\RR) \times \cH^s (\RR^3;\RR^3) \times \cH^{s-1} (\RR^3;\RR^3) \times \cH^{s+1} 
(\RR^3;\RR^3) \bigr) \, . 
\end{equation}
\end{theo}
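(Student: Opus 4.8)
The plan is to eliminate $B$ from (\ref{compressible}) by inverting the constitutive relation (\ref{lienBB*ini}), to recast the remaining equations for $(\rho,v,B^*)$ as a symmetrizable hyperbolic--parabolic system whose coefficients and source terms are pseudodifferential of order $\leq 0$, and then to close energy estimates by combining the algebraic cancellations of the MHD-type couplings with the partial ellipticity furnished by $\nu>0$. To carry out the elimination I would first study the linear operator $\cL_\rho:B\mapsto B+\nabla\times(\rho^{-1}\nabla\times B)$. Since $\nabla\times(\rho^{-1}\nabla\times B)$ is automatically divergence free, $\cL_\rho$ respects the Helmholtz decomposition: it is the identity on gradient fields, while on divergence-free fields it is coercive, because $\langle\cL_\rho B,B\rangle_{L^2}=\|B\|_{L^2}^2+\int\rho^{-1}|\nabla\times B|^2\,dx\geq\|B\|_{L^2}^2$ as soon as $\rho\geq\bar\rho/2$, and elliptic of order $2$. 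By Lax--Milgram and elliptic regularity, $\cL_\rho$ is then an isomorphism from $\cH^{s+1}(\RR^3;\RR^3)$ onto $\{B^*\in\cH^{s-1}(\RR^3;\RR^3):\nabla\cdot B^*\in\cH^s\}$, its inverse acting as a smoothing operator of order $-2$ on the solenoidal part and as the identity on the gradient part, and depending smoothly on $\rho$ in the relevant norms (here the Weyl calculus of the keywords enters). Taking the divergence of (\ref{lienBB*ini}) gives $\nabla\cdot B=\nabla\cdot B^*$, while, the divergence of a curl being zero, the third equation of (\ref{compressible}) forces $\partial_t(\nabla\cdot B^*)=0$; thus $\nabla\cdot B^*$ (and $\nabla\cdot B$) is frozen in time and stays in $\cH^s$. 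This explains both the extra assumption $\nabla\cdot B_0^*\in\cH^s$ and the gain $B\in\cH^{s+1}$ in (\ref{sensesmooth}).

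Substituting $B=\cL_\rho^{-1}B^*$, the current $\nabla\times B=\nabla\times\cL_\rho^{-1}B^*$ becomes an operator of order $-1$ applied to $B^*$, hence it lives at the $\cH^s$ level of $v$. I would then perform the change of unknowns announced in the introduction --- keeping $\rho$ and $v$ and replacing $B^*$ by a magnetic variable sitting at level $\cH^s$, the remaining $\cH^{s-1}$ information being recovered from the conserved divergence through an elliptic solve, or equivalently conjugating by a suitable symmetrizer --- so as to write the system in symmetrizable form: a symmetric positive-definite $A_0(U)$, symmetric $A_j(U)$, a single one-sided second-order term $\nu\nabla(\nabla\cdot v)$ acting on $v$ alone, and a remainder $\cR(U)$ built from pseudodifferential operators of order $\leq 0$; this requires staying near $(\bar\rho,0,0)$ and away from vacuum, which a continuity argument guarantees on a short time interval. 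The $L^2$-energy guiding the construction is $\tfrac12\int(\rho|v|^2+B\cdot B^*)\,dx=\tfrac12\int(\rho|v|^2+|B|^2+\rho^{-1}|\nabla\times B|^2)\,dx$, which is nonnegative, and the MHD couplings cancel at the $L^2$ level: testing the weighted momentum balance against $\rho v$ and the $B^*$-balance against $B$ produces the identity $\int v\cdot(B^*\times(\nabla\times B))\,dx+\int\nabla\times(B^*\times v)\cdot B\,dx=0$, with analogous cancellations for the $d$-term and the inertial term after integration by parts.

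The crux is the higher-order a priori estimate. Applying $\Lambda^\sigma$ at the appropriate levels ($\sigma=s$ for $\rho$, $v$ and the new magnetic variable, $\sigma=s-1$ for $B^*$) and commuting through, I would bound commutators by Moser and Kato--Ponce inequalities together with the pseudodifferential calculus for the $\cL_\rho^{-1}$-dependent coefficients, using $s>5/2$ to control $L^\infty$ norms. The cancellation above survives at top order; the genuinely delicate contributions are the top-order pieces produced by the Hall term $\nabla\times(B^*\times(d\,\rho^{-1}\nabla\times B))$ and by the inertial term $\nabla\times((\nabla\times v)\times\rho^{-1}\nabla\times B)$, which a priori threaten a loss of derivatives. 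The point --- in the spirit of the Kawashima--Shizuta theory of symmetric hyperbolic--parabolic systems --- is that, after the change of unknowns, these either become governed by order-zero operators (hence harmless) or reduce to commutators paired against $\nabla(\nabla\cdot v)$, which are then absorbed into the bulk-viscosity dissipation $\nu\|\nabla(\nabla\cdot v)\|_{\cH^{s-1}}^2$ by Young's inequality, at the cost of a factor $\nu^{-1}$. This should yield a differential inequality of the type $\tfrac{d}{dt}\cE_s+\nu\,\cD_s\leq C(\cE_s^{3/2}+\nu^{-1}\cE_s^2)$, whence a continuation argument produces a lifespan $T$ proportional to $\nu$ and inversely proportional to the $\cH^s\times\cH^s\times\cH^{s-1}\times\cH^s$-norm of $(\rho_0,v_0,B_0^*,\nabla\cdot B_0^*)$, exactly as stated.

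It then remains to turn these estimates into an actual solution. Existence would follow from a Friedrichs mollification (or parabolic regularization) of the reformulated system: the regularized problems are globally solvable in a ball of a high Sobolev space, the estimate above is uniform in the regularization parameter, and a compactness argument of Aubin--Lions type passes to the limit, the $\nu>0$ dissipation supplying the strong convergence of $\nabla\cdot v$ needed for the nonlinearities. Uniqueness comes from an energy estimate one derivative below the data on the difference of two solutions, using the Lipschitz dependence of the coefficients and of $\cL_\rho^{-1}$ on the unknowns; time-continuity with values in the top spaces, rather than mere weak continuity, is then recovered by a Bona--Smith argument, legitimate since $s>5/2$. Finally $B\in C([0,T];\cH^{s+1})$ follows from $B=\cL_\rho^{-1}B^*$ and the continuous dependence of $\cL_\rho^{-1}$ on $\rho\in C([0,T];\cH^s)$. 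The main obstacle throughout is the a priori estimate: proving that the Hall and inertial contributions, which carry no dissipation of their own, are tamed by the hidden symmetric structure together with the single scalar viscosity $\nu$ --- this is the heart of the matter and the reason the statement requires $\nu>0$.
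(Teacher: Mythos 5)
Your overall strategy coincides with the paper's: invert the constitutive relation so that the current acts on the magnetic unknown as an operator of order $-1$, freeze the magnetic divergence (Lemma \ref{preservationB*}), recast the equations as a symmetric hyperbolic--parabolic system whose coefficients and sources are pseudodifferential of order $\leq 0$ in the spirit of Kawashima--Shizuta, and let the bulk viscosity $\nu>0$ absorb the only first-order source, which is what produces a lifespan proportional to $\nu$ and inversely proportional to the data norm. The end machinery you invoke (regularization, compactness, uniqueness one derivative below, Bona--Smith) is also consistent with what the paper delegates to the standard quasilinear theory.

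There is, however, a concrete gap at the stated threshold $s>5/2$. Your argument pivots on the claim that $\cL_\rho^{-1}$ gains two derivatives on the solenoidal part and depends smoothly on $\rho$ for every $\rho\in\cH^s$ with $s>5/2$, and on closing the top-order estimates by direct Kato--Ponce commutator bounds with such variable-coefficient pseudodifferential operators. The paper establishes this order $-2$ gain (Proposition \ref{ellllli}) only under the stronger hypothesis $s>7/2$, because the Weyl/parametrix construction with $\cH^s$ coefficients costs regularity of $\rho$; this is precisely why the vorticity formulation (\ref{compressiblebcomplet}) yields the theorem only for $s>7/2$. To reach $s>5/2$ the paper deliberately avoids the $-2$ gain: it switches to the potential unknown $A^*$ with $A=(\mathscr L^{\mc}_2)^{-1}(\rho\,A^*)$, for which only the $L^2$-boundedness of the inverse (Lemma \ref{zeroorder}) is needed, places all evolved unknowns at the single level $\cH^s$, and recovers the higher-order bounds by observing that the vorticity formulation is the derived system of the potential one. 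You gesture at a ``magnetic variable sitting at level $\cH^s$'' and at Kawashima--Shizuta, but you do not supply this mechanism, so the range $5/2<s\leq 7/2$ is not actually covered by your plan. A second, smaller omission: the curl couplings $C\times(\nabla\times\,\cdot\,)$ are not skew-adjoint as written, and the linearized potential system is not symmetric term by term; the $L^2$ estimate closes only after passing to the adjoint operators $\mathscr T_C$ and exploiting $\nabla\cdot\dot A^*=0$ together with the frozen divergence $\nabla\cdot B^*_0$ (Remark \ref{yeti}, Lemma \ref{L2inestimatecomp}), and your appeal to a ``hidden symmetric structure'' needs exactly this step to be made precise.
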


\noindent Let us suppose, as it is often the case in practice, that $ 0 < d_e \leq d_i \ll 1 $. Then, our analysis indicates that the system 
(\ref{compressibledeb}) involves a mix of three interconnected regimes:
\begin{itemize}
\item [-] For low frequencies $ \vert \xi \vert \lesssim 1 $, the solutions behave (approximately) as provided for by compressible magnetohydrodynamics 
\cite{Ala08,Maj84,Scho07}. 
\item [-] For intermediate frequencies $ d_i^{-1} \lesssim \vert \xi \vert \ll d_e^{-1} $, the Hall effects come into play \cite{Chae,Dai,LiuTan,Light,Ye}, 
and various amplification mechanisms become to be implemented. This includes a step towards the singularity formations detected by 
mathematicians \cite{ChaeW,JO} and the tearing modes studied by physicists \cite{FP,GTAM} in the perspective of collisionless magnetic 
reconnection. However, in the weakly nonlinear regime (\ref{rescaledversion}) and as long as the time remains finite, these instabilities do 
not induce the explosion (of norms)  and they do not jeopardize the construction of solutions.
\item [-] For large frequencies $ d_e^{-1} \lesssim \vert \xi \vert $, inertial aspects take the place and new speeds (modes) of propagation 
appear. This means the emergence of {\it inertial waves} (see Paragraph \ref{The inertial waves}), whose impacts have been already observed 
by physicists \cite{ALM16,MLM17} but which do not seem to have been mathematically well identified before. 
\end{itemize}

\smallskip

\noindent It should be borne in mind that Theorem \ref{wellposednessideal} is more accessible than Theorem \ref{theoprin}. 
To some extent, it can be viewed as a simplified version of it. This is why the analysis begins in Section \ref{vorcasincompressible} 
with completing the incompressible situation. This makes the basic ideas more accessible. This also furnishes clear guidelines in the perspective 
of the compressible framework which is investigated in Section \ref{vorcompressibleframework}.

\break

\noindent Section \ref{vorcompressibleframework} follows the same steps as in Section \ref{vorcasincompressible} but it faces new 
challenges:
\begin{itemize}
\item [-]  On the one hand, in comparison with (\ref{lienBB*2}), due to the variations of $ \rho $, it is more difficult to exhibit the 
properties of (partial) ellipticity which are hidden behind the constitutive relation (\ref{lienBB*ini}), see Subsection \ref{transformationcompressible}. 
\item [-] On the other hand, the incompressible transformation must be adapted to the compressible framework, see Subsection 
\ref{transbiscompressible}. We still add the vorticity $ w = \nabla \times v $. Besides, we implement the divergence $ \nabla \cdot v $ and one 
order derivatives of $ \rho $. The system thus obtained is called the {\it compressible vorticity formulation}. 
\item [-] In subsection \ref{lwellposedcomp}, we remark that the divergence of $ B^* $ is a preserved quantity. Taking advantage of this 
information, we show that there is no loss of hyperbolicity and that energy estimates become available.  This is the entry point to the proof 
of Theorem \ref{theoprin}  (at least for $ s > 7/2 $). 
\end{itemize}

\noindent Another salient point should be reported. When dealing in space dimension $ d=3 $ with Sobolev solutions to quasilinear 
systems, the restriction $ s -1> 1+ (d/2) = 5/2 $ (or equivalently $ s > 7/2 $) on the component $ B^* $ would be expected \cite{Maj84,Scho07}. 
In Theorems \ref{wellposednessideal} and \ref{theoprin}, observe the presence of the relaxed condition $ B^* \in \cH^{(3/2)+} $ instead of 
the usual constraint $ B^* \in \cH^{(5/2)+} $. There is a gain of one degree of regularity which is justified in Section \ref{potentialformulations}. 
To this end, instead of looking at derivatives of $ (\rho,v) $, we integrate the magnetic field $ B^* $. As a matter of fact, we consider the 
magnetic potential $ A^* $ which is such that $ \nabla \times A^* = B^* $ and $ \nabla \cdot A^* = 0 $. This leads to the {\it potential formulation}.

\smallskip

\noindent The potential formulation furnishes a self-contained system on $ (\rho,v,A^*) $, which can be studied independently 
and which furnishes different types of supplementary information. This corresponds to the most completed approach but also 
in some aspects to the most challenging. This is why it is explained lastly. In fact, the potential formulation falls (modulo adaptations) 
under the scope of Kawashima-Shizuta theory \cite{KS}. This leads to the optimal regularity results (with $ s > 5/2 $) stated in Theorems 
\ref{wellposednessideal} and \ref{theoprin}.

\smallskip

\noindent Section \ref{Extended dispersion relations} is to exhibit the various types of inertial waves that can arise, and to study their 
properties. To this end, we first select special solutions (constant, in the form of Beltrami fields, corresponding to null point configurations, 
two dimensional, or even moving). Then, we look at the associated linearized equations and we focus on the regime of high frequencies 
(with $ d_e^{-1} \lesssim \vert \xi \vert $). By this way, we can highlight the presence of {\it inertial dispersion relations} which are of 
particular interest. 

\smallskip

\noindent There is a short Appendix, in Section \ref{Appendix}. It is about the div-curl system which appears repeatedly
throughout the text.

\smallskip

\noindent Given a state variable $ U $, we often employ the notation $ U^\star_\diamond $. The superscript $ \star \in \{ \mi, \mc \} $ 
is to indicate that $ U $ is related respectively to the \underline{$ \mi $}ncompressible and \underline{$ \mc $}ompressible situations. 
The subscript $ \diamond \in \{ \mv,\mathfrak{p} \} $ (where $ \mv $ and $ \mathfrak{p} $ must not be confused with the velocity $ v $
and the pressure $ p $) refers to the \underline{$ \mv $}orticity 
and \underline{$ {\mathfrak p} $}otential formulations. We reserve the {\it rsfs} font $ \mathscr P $ for operators, with a symbol denoted by the 
standard font $ P $, so that $ \mathscr P = P(D_x) $. We often put the subscript $ * \in \ZZ $ to specify that $ \mathscr P^\star_* = 
P^\star_*(D_x)  $ is of (maximal) order $ * $, while the superscript $ \star \in \{ \mi, \mc \} $ may still be incorporated for the same 
reasons as before.
 

\section{The incompressible situation} \label{vorcasincompressible}
In Subsection \ref{ideal equations}, we introduce the incompressible equations and some of its principal features. In Subsection \ref{idealconstitutive}, 
we exhibit properties of ellipticity lying behind (\ref{lienBB*2}). In Subsection \ref{transbis}, we perform a dependent change of 
unknowns which transforms (\ref{syssimpli}). In Subsection \ref{lwellposedin}, we derive energy estimates in order to show Theorem 
\ref{wellposednessideal}.


\subsection{The incompressible equations} \label{ideal equations}
The incompressible situation is strongly linked to the system (\ref{compressible})-(\ref{lienBB*ini}) of origin. To see how, starting from 
(\ref{compressible})-(\ref{lienBB*ini}), we have to deduce (\ref{rhoconstant1}), (\ref{divfreeini}), (\ref{syssimpli}) and (\ref{lienBB*2}). 
To this end, we consider below successively the indents i), $ \cdots $, iv)  of Subsection \ref{idealsituationIntroduction}.
\begin{itemize}
\item [i)] Since $\nabla \cdot v = 0$, the first equation of (\ref{compressible}) implies that the density $\rho$ 
is just advected along the characteristic curves generated by the vector field $v$. Hence it remains constant, 
say $ \rho = \bar \rho = 1$, if initially $\rho_0 = 0 $. 
\item [ii)] As already explained, the term $ \nabla p $ plays the role of a Lagrange multiplier which ensures the 
propagation of the constraint $ \nabla \cdot v = 0 $. On the other hand, it is clear that the divergence-free condition 
imposed inside (\ref{inidataindivfree}) on $ B^* $ at time $ t = 0 $ is propagated via the divergence of the third 
equation of \eqref{compressible}, and that it is transmitted to $ B $ through (\ref{lienBB*ini}). 
\item [iii)] We can always incorporate the part $ \vert \nabla \times B \vert^2/ {2 \, \rho^2} $ to the function $ p $. Then, 
knowing that $ \rho = 1 $ and $ \nabla \cdot v = 0$, the system (\ref{compressible}) is exactly the same as (\ref{syssimpli}).
\item [iv)] The link between $ B $ and $ B^* $ is here simplified into $ B^* = B + \nabla \times ( \nabla \times B) $.
Now, since $ \nabla \cdot B = 0 $, we have $ \nabla \times ( \nabla \times B) = - \Delta B $. After inversion, this yields (\ref{lienBB*2}).
\end{itemize}

\noindent Before proceeding, we remark that there is a conserved quantity which may be expressed in terms of $ (v,B) $.

\begin{lem} \label{energypreserin} [A conserved quantity]
incompressible XMHD preserves the energy 
  \begin{equation}    \label{conservedprop}  
 \cE^\mi := \frac 1 2\int_{\RR^3} \big(\vert v \vert^2 + \vert B \vert^2 + \vert \nabla \times B \vert^2 \big) \, dx \, .
   \end{equation}
  \end{lem}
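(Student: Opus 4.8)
The plan is to verify the claimed conservation law directly by differentiating $\cE^\mi$ along solutions of the incompressible XMHD system \eqref{syssimpli}--\eqref{lienBB*2}, using the divergence-free conditions \eqref{divfreeini} throughout. First I would compute $\frac{d}{dt}\int |v|^2\,dx$ using the momentum equation: multiplying $\part_t v + (v\cdot\nabla)v + \nabla p + B^*\times(\nabla\times B)=0$ by $v$ and integrating, the transport term $\int v\cdot(v\cdot\nabla)v\,dx$ vanishes since $\nabla\cdot v=0$, the pressure term $\int v\cdot\nabla p\,dx$ vanishes for the same reason, and there remains $-\int v\cdot\big(B^*\times(\nabla\times B)\big)\,dx$. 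Next I would treat the magnetic contributions: note that $|B|^2+|\nabla\times B|^2$ should be rewritten, using $\nabla\cdot B=0$ and hence $\nabla\times(\nabla\times B)=-\Delta B$, in a form that exposes $B^*$; indeed from \eqref{lienBB*2}, $\int (|B|^2+|\nabla\times B|^2)\,dx = \int B\cdot(\id-\Delta)B\,dx = \int B\cdot B^*\,dx$. So the natural move is $\cE^\mi = \tfrac12\int(|v|^2 + B\cdot B^*)\,dx$, and I would differentiate this equivalent expression instead.

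Then I would compute $\frac{d}{dt}\int B\cdot B^*\,dx$. Since $B=(\id-\Delta)^{-1}B^*$ is the inverse of a self-adjoint operator, $\int B\cdot\part_t B^*\,dx = \int \big((\id-\Delta)^{-1}B^*\big)\cdot\part_t B^*\,dx$, and by self-adjointness this equals $\int B^*\cdot\part_t B\,dx$, so $\frac{d}{dt}\int B\cdot B^*\,dx = 2\int B\cdot\part_t B^*\,dx$. Using the third equation of \eqref{syssimpli}, $\part_t B^* = -\nabla\times\big(B^*\times(v - d\,\nabla\times B)\big) - \nabla\times\big((\nabla\times v)\times(\nabla\times B)\big)$, I would pair with $B$ and integrate by parts, moving the curl onto $B$ to produce $\nabla\times B$ contracted against the two vector triple products. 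The term proportional to $d$ should cancel: it produces $\int (\nabla\times B)\cdot\big(B^*\times(\nabla\times B)\big)\,dx$, which vanishes because $B^*\times(\nabla\times B)$ is orthogonal to $\nabla\times B$. What survives is $-\int(\nabla\times B)\cdot\big(B^*\times v\big)\,dx - \int(\nabla\times B)\cdot\big((\nabla\times v)\times(\nabla\times B)\big)\,dx$; the last integral vanishes pointwise (triple product with a repeated factor). The remaining term is $-\int(\nabla\times B)\cdot(B^*\times v)\,dx = \int v\cdot\big(B^*\times(\nabla\times B)\big)\,dx$ after cyclic permutation of the scalar triple product.

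Finally I would add the two contributions: $\frac{d}{dt}\cE^\mi = \tfrac12\cdot 2\cdot\big(-\int v\cdot(B^*\times(\nabla\times B))\,dx\big) + \tfrac12\cdot 2\cdot\big(\int v\cdot(B^*\times(\nabla\times B))\,dx\big) = 0$, where the factor-$\tfrac12$ accounting is: the $|v|^2$ half contributes the first term and the $B\cdot B^*$ half contributes the second, and they are exact opposites. Hence $\cE^\mi$ is constant in time. I would carry this out for the smooth solutions furnished by Theorem \ref{wellposednessideal}, so that all integrations by parts and the self-adjointness manipulation are justified by the regularity \eqref{reginidatainpropa} and the spatial decay implicit in the $\cH^s$ setting; a brief remark that the identity extends by density or by the a priori bounds covers any lower-regularity case.

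The main obstacle is purely bookkeeping: correctly tracking the several vector triple-product identities and the signs coming from integration by parts on the curl operator, and recognizing at the outset that one should replace $|B|^2+|\nabla\times B|^2$ by $B\cdot B^*$ before differentiating — differentiating the original three-term form directly works too but forces one to commute $\part_t$ with $\nabla\times$ and handle $\int(\nabla\times B)\cdot(\nabla\times\part_t B)\,dx$, which is the same computation in disguise. No genuine analytic difficulty arises; the cancellation of the Hall term (the $d$-dependent piece) and of the $(\nabla\times v)\times(\nabla\times B)$ piece is the structural heart of the statement.
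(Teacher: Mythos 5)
Your proposal is correct and follows essentially the paper's own argument: test the momentum equation with $v$, test the $B^*$-equation with $B$, use the orthogonality of the scalar triple products to kill the Hall and inertial terms, and observe that the two remaining coupling integrals cancel by antisymmetry. Rewriting $\cE^\mi$ as $\tfrac12\int(|v|^2+B\cdot B^*)\,dx$ before differentiating is, as you yourself note, only a cosmetic repackaging of the same computation.
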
 

\begin{proof} Take the $L^2$-scalar product of the first equation of \eqref{syssimpli} with $v$. Using integration by parts and the 
condition $\nabla \cdot v=0$, all terms vanish except $ B^* \times (\nabla \times B)$ giving
\begin{equation}
  \label{eqn_nrj_1}
  \frac{d}{dt} \bigg( \frac 12\int_{\RR^3} |v|^2 \, dx \bigg) + \int_{\RR^3} v\cdot \bigl( B^* \times (\nabla \times B) \bigr) \, dx = 0 \,.  
\end{equation}
Take the $L^2$-scalar product of the second equation of \eqref{syssimpli} with $B$ (but not $ B^* $). Perform integration by parts 
(or exploit that the curl operator is self-adjoint), to see that the two triple products vanish. There remains
  \begin{equation}
  \label{eqn_nrj_2}
  \frac{d}{dt} \bigg( \frac 12\int_{\RR^3} \big(|B|^2 + |\nabla \times B|^2\big) \, dx \bigg)  + \int_{\RR^3} (\nabla \times B) \cdot ( B^* \times v)\, dx=0\,.  
\end{equation}
Summing \eqref{eqn_nrj_1} and \eqref{eqn_nrj_2}, we obtain that $d \cE^\mi/dt=0$ as expected. 
\end{proof}

  \begin{rem}\label{Leray} [Similarities with Leray--$\alpha$ models]
    Incompressible XMHD equations may bear some resemblance to Lagrangian averaged 
    (or Leray--$\alpha$) Euler equations \cite{HL06,MSH01,MSH03}, where a parameter $\alpha$ is introduced and represents 
    the spatial scale below which the dymanics are averaged. But if the parameter $d_e$ can be seen (to some extent) as playing 
    the part of $\alpha$ in Lagrangian averaged $\alpha$--models, its introduction is driven by other considerations related to 
    two-fluid models \cite{GP04,JJM} and its handling is completely different.
  \end{rem}

\noindent We also come back to the introduction of $ d $, and its significance.

\begin{rem} \label{comparison} [Comparison of electron and ion skin depths] Let $ \omega_{pi} $ and $ \omega_{pe} $ be the ion and
electron plasma frequencies. The ratio between $ d_i $ and $ d_e $ can be expressed in terms of 
\href{https://en.wikipedia.org/wiki/Plasma_parameters}{plasma parameters} according to
$$ d = \frac{d_i}{d_e} = \frac{\omega_{pe}}{\omega_{pi}} \simeq 42,72 \ \frac{\sqrt n_e}{\sqrt n_i} \ \frac{\sqrt \mu}{Z} \, ,  $$
where $ n_i $ and $ n_e $ are the \href{https://en.wikipedia.org/wiki/Number_density}{number densities} of ions and electrons, $ \mu =
m_i/m_p $ is the ion mass (expressed in units of the proton mass), and $ Z = q_i/e $ equals to the 
\href{https://en.wikipedia.org/wiki/Plasma_parameters}{atomic number}. In most plasmas, the parameters $ d_e $ and $ d_i $ are 
such that $ d_e \ll d_i $ or even $ d_e \leq 0,1 \times d_i $. However, these two parameters are completely independent, and there are situations 
(related to magnetic reconnection, see the paragraph  "{\it Inertial MHD}" in \cite{LMM16}-p.2402) where $ d_i $ and $ d_e $ could be 
comparable, with $ d_i \sim d_e $. 
\end{rem} 


\subsection{The incompressible constitutive relation} \label{idealconstitutive}
The operator 
$$ \nabla \times \nabla \times :  \cH^2(\RR^3;\RR^3) \rightarrow L^2(\RR^3;\RR^3) $$ 
is not elliptic of order $ 2 $, since it has a nonzero kernel. To avoid this difficulty, it suffices to restrict its action on a suitable subspace.

\begin{lem} \label{ellde2} [Underlying ellipticity when passing from $ B^* $ to $ B $ through the relation (\ref{lienBB*2})] The differential operator 
\begin{equation}\label{differentialoperator}   
\mathscr L^\mi_2  := \id + \nabla \times \nabla \times : \cD^s \rightarrow \cD^{s-2} \, , \qquad s \in \RR \, , 
\end{equation}
is bijective and elliptic of order $ 2 $. Its inverse $ (\mathscr L^\mi_2)^{-1} : \cD^{s-2} \rightarrow \cD^{s} $ takes the form of a Fourier multiplier 
which is elliptic of order $ -2 $.
\end{lem}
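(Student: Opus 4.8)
The plan is to work entirely on the Fourier side, since $\mathscr L^\mi_2$ has constant coefficients. For a vector field $D \in \cD^s$, write $\hat D(\xi)$ for its Fourier transform, which satisfies $\xi \cdot \hat D(\xi) = 0$ for a.e. $\xi$. Under Fourier transform, $\nabla \times$ becomes multiplication by $i \xi \times \cdot$, so $\nabla \times \nabla \times$ becomes the matrix $M(\xi) := -\,\xi \times (\xi \times \cdot) = |\xi|^2 \,\Pi_\xi$, where $\Pi_\xi := \id - |\xi|^{-2} \xi \otimes \xi$ is the orthogonal projection onto $\xi^\perp$ (using the identity $a \times (b \times c) = b(a\cdot c) - c(a\cdot b)$). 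Hence the symbol of $\mathscr L^\mi_2$ is $L^\mi_2(\xi) = \id + |\xi|^2 \Pi_\xi$. On the divergence-free subspace — i.e. when acting on $\hat D(\xi) \in \xi^\perp$ — we have $\Pi_\xi \hat D(\xi) = \hat D(\xi)$, so $L^\mi_2(\xi)$ acts as the scalar $1 + |\xi|^2 = \langle \xi \rangle^2$. This is the key point: restricting to $\cD^s$ collapses the degenerate symbol to a strictly positive scalar.

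Next I would record the consequences. First, ellipticity of order $2$: on $\xi^\perp$ the symbol equals $\langle\xi\rangle^2 \id$, which is invertible with $|L^\mi_2(\xi)^{-1}| = \langle\xi\rangle^{-2}$ and is bounded below by $c |\xi|^2$ for $|\xi|$ large, so $\mathscr L^\mi_2$ is elliptic of order $2$ in the usual sense. Second, bijectivity $\cD^s \to \cD^{s-2}$: given $G \in \cD^{s-2}$, set $\widehat{(\mathscr L^\mi_2)^{-1}G}(\xi) := \langle\xi\rangle^{-2}\, \hat G(\xi)$. Since $\hat G(\xi) \in \xi^\perp$ for a.e. $\xi$, the same holds for $\langle\xi\rangle^{-2}\hat G(\xi)$, so the result lies in the divergence-free class; and $\langle\xi\rangle^s \langle\xi\rangle^{-2}\hat G = \langle\xi\rangle^{s-2}\hat G \in L^2$, giving membership in $\cH^s$, hence in $\cD^s$. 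Plancherel shows this map is a bounded two-sided inverse. Third, the Fourier-multiplier description: one can write $(\mathscr L^\mi_2)^{-1} = \big(\id + |\xi|^2\Pi_\xi\big)^{-1}$ as a genuine multiplier on all of $\cH^{s-2}(\RR^3;\RR^3)$ by inverting the symmetric matrix $L^\mi_2(\xi)$ pointwise — its eigenvalues are $1$ (eigenvector $\xi$) and $\langle\xi\rangle^2$ (double, eigenspace $\xi^\perp$) — so $L^\mi_2(\xi)^{-1} = |\xi|^{-2}\xi\otimes\xi + \langle\xi\rangle^{-2}\Pi_\xi$, which on $\cD^{s-2}$ reduces to $\langle\xi\rangle^{-2}\id$ as above; this matrix symbol is of order $-2$ and elliptic of order $-2$ on the divergence-free subspace.

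The steps are: (i) compute the symbol and diagonalize it using $a\times(b\times c) = b(a\cdot c)-c(a\cdot b)$; (ii) observe the reduction to the scalar $\langle\xi\rangle^2$ on $\xi^\perp$; (iii) read off ellipticity, construct the inverse explicitly via Plancherel, and check it preserves the divergence-free constraint and the Sobolev scale; (iv) identify everything as a Fourier multiplier. There is no real obstacle here — the only thing to be careful about is the bookkeeping between the ``pointwise'' inverse of the matrix symbol on all of $\RR^3$ and its restriction to the subspace $\xi^\perp$, i.e.\ making explicit that on $\cD^s$ the operator $\mathscr L^\mi_2$ coincides with the nice scalar operator $\id - \Delta = \langle D_x\rangle^2$, which is precisely what makes (\ref{lienBB*2}) legitimate. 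This also dovetails with the later claim that the same operator is of order $0$ rather than $-2$ when $d_e = 0$: then the constitutive relation is simply $B^* = B$ with no Laplacian, so the symbol is the identity. I would close by noting that the mapping properties into $\cD^{s+1}$ asserted in Theorem \ref{wellposednessideal} for $B$ are immediate from $B = (\id-\Delta)^{-1}B^*$ and the order $-2$ just established, giving a gain of two derivatives from $B^* \in \cD^{s-1}$.
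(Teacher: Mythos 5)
Your proof is correct and follows essentially the same route as the paper: compute the constant-coefficient symbol $\id + |\xi|^2\Pi_\xi$, note that on divergence-free fields (Fourier transforms valued in $\xi^\perp$) it collapses to $\langle\xi\rangle^2\,\id$, so that $\mathscr L^\mi_2$ coincides with $\id-\Delta$ on $\cD^s$ and its inverse is the multiplier $\langle\xi\rangle^{-2}$, elliptic of order $-2$. The only difference is cosmetic: the paper diagonalizes via an explicit orthonormal frame $O_0(\xi)$ and the Leray projector (which it reuses later in the compressible analysis), whereas you use the intrinsic projection $\Pi_\xi$ directly.
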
 

\begin{proof} Let $ \bigl( e_1(\xi), e_2(\xi), e_3(\xi) \bigr) $ be a smooth orthonormal frame on $ \RR^3 \setminus \{0\} $ which is adjusted such 
that $ e_1(\xi) = \xi / \vert \xi \vert $. Let $ O_0 (\xi) $ be the orthogonal matrix whose column vectors are $ e_1(\xi) $, $ e_2(\xi) $ and $ e_3(\xi) $. 
In other words
\begin{equation}\label{defdePxi}  
O_0 (\xi) := \bigl( e_1(\xi), e_2(\xi), e_3(\xi) \bigr) \, , \qquad e_1(\xi) = \xi / \vert \xi \vert \, , \qquad e_i(\xi) \cdot e_j(\xi) = \delta_{ij} \, .
\end{equation}
Since $ \xi \times e_1(\xi) = 0 $ whereas $ \xi \times \xi \times e_j (\xi) = - \vert \xi \vert^2 \, e_j (\xi) $ for $ j \in \{2,3 \} $, we have
\begin{equation}\label{fourierac}  
\qquad \cF \, (\id + \nabla \times \nabla \times ) \, \cF^{-1} = O_0 (\xi) \, D_2^\mi  (\xi) \, O_0(\xi)^{-1} , \qquad D_2^\mi  (\xi) := \text {\footnotesize 
$ \displaystyle 
\left( \begin{array}{ccc}
1 & 0 & 0 \\
0 & \langle \xi \rangle^2 & 0 \\
0 & 0 & \langle \xi \rangle^2 
\end{array} \right) $} . 
\end{equation}
In other words, the action of $ \id + \nabla \times \nabla \times $ on the whole space $ \cH^2 (\RR^3;\RR^3) $ is unitary equivalent through a
conjugation by $ \mathscr O_0 := O_0 (D_x) $ to the diagonal operator $ \mathscr D^\mi_2 := D^\mi_2 (D_x) $. Introduce the $ L^2 $-projectors 
$ \mathscr P $ and $ \mathscr Q $, where $ \mathscr Q := \id - \mathscr P $ and $ \mathscr P = P(D_x) $ is given by the Leray projector whose 
matrix valued symbol is given by
\begin{equation}\label{l2projector}   
P (\xi) \, v :=  \bigl( e_2(\xi) \cdot v \bigr) \, e_2 (\xi) + \bigl(e_3(\xi) \cdot v \bigr) \, e_3 (\xi) \, . 
\end{equation}
Recall that the operator $ \mathscr L^\mi_2 $ is defined by the restriction of its action to $ \cD^s $, while the set $ \cD^s $ may be characterized by
\begin{equation}\label{charactds}   
\cD^s := \mathscr P \, \cH^s (\RR^3;\RR^3) \, .
\end{equation}
This implies that $ \mathscr L^\mi_2 $ does not see the eigenvalue $ 1 $ of $ D^\mi_2 (\xi) $. It just acts on the Fourier side according 
to the multiplier
\begin{equation}\label{reducdeLe}  
\quad \cF \, \mathscr L^\mi_2 \, \cF^{-1} = \bigl( \cF \, (\id + \nabla \times \nabla \times) \, \cF^{-1} \bigr)_{\mid \cF \, \cD^s} \equiv \langle \xi \rangle^2 \ \id \, ,
\qquad \mathscr L^\mi_2 \equiv \id - \Delta \, , 
\end{equation}
which is bijective and elliptic of order $ 2 $. From (\ref{reducdeLe}), we infer that 
$$ \cF \, (\mathscr L^\mi_2 )^{-1} \, \cF^{-1} = \langle \xi \rangle^{-2} \, \id \, , \qquad (\mathscr L^\mi_2)^{-1} \equiv (\id - \Delta)^{-1} \, . $$ 
This clearly confirms that resorting to $ (\mathscr L^\mi_2)^{-1} $ allows to gain two derivatives.
\end{proof}

\noindent With the above convention, we can deduce from (\ref{lienBB*2}) the incompressible constitutive relation
\begin{equation}\label{ell-1}   
\nabla \times B = \mathscr K^\mi_{-1} \, B^* \, , \qquad \mathscr K^\mi_{-1} := (\mathscr L^\mi_2 )^{-1} \, \nabla \times \equiv \mathscr 
K^\mi_{-1} \mathscr P \, .  
\end{equation}
This relation and Lemma \ref{ellde2} are essential because they allow to interpret all terms implying $ \nabla \times B $ inside (\ref{syssimpli})
as acting on $ B^* $ like operators of order $ -1 $ (instead of $ 1 $ when $ d_e = 0 $). This means that the expressions $ B $ and $ B^* $ do 
not play similar roles. At the same time, this invites to reconsider the hierarchy of terms when looking at (\ref{syssimpli}). With this in mind, 
in the next subsection, we apply the curl operator on the first equation of (\ref{syssimpli}). 


\subsection{Transformation of the incompressible equations} \label{transbis}
The purpose of this subsection is twofold. First, in Paragraph \ref{ideal vorticity formulation}, we exploit (\ref{divfreeini}) and 
(\ref{lienBB*2}) in order to recast (\ref{syssimpli}). Secondly,  in Paragraph \ref{The inertial waves}, we give a concrete meaning 
to the notion of inertial waves.


\subsubsection{The incompressible vorticity formulation} \label{ideal vorticity formulation}
The point is to implement the vorticity $ w := \nabla \times v $ as a new unknown. From (\ref{followingidentity}) and \eqref{VecIdent-1}, we 
can extract a {\it derived system} on $ U^\mi_\mv := (w,B^*) $, which is
 \begin{equation}\label{syssimplinet}  
\left \lbrace \begin{array}{l}
\part_t w + (v \cdot \nabla ) w + ( \mathscr K^\mi_{-1} \, B^* \cdot \nabla ) B^* = \mathscr S^{\mi w}_{\mv0} \, U^\mi_{\mv}  \, ,  \medskip \\
\displaystyle \part_t B^* +  \bigl( ( v - d \, \mathscr K^\mi_{-1} \, B^* ) \cdot \nabla \bigr) B^* + \bigl( \mathscr K^\mi_{-1} \, B^* \cdot \nabla 
\bigr) w = \mathscr S^{\mi B^*}_{\mv 0} \, U^\mi_{\mv}  \, .
\end{array} \right. 
 \end{equation}
 In (\ref{syssimplinet}), the velocity $ v $ must be deduced from $ w $ through the Biot-Savart law (\ref{Biot-Savart}), while the operator 
 $ \mathscr S^{\mi}_{\mv 0} = (\mathscr S^{\mi w}_{\mv 0},\mathscr S^{\mi B^*}_{\mv 0}) $ is given by 
$$ \begin{array}{l}
\displaystyle \mathscr S^{\mi w}_{\mv 0} \, U^\mi_{\mv}  := (B^* \cdot \nabla) \mathscr K^\mi_{-1} \, B^* + \sum_{i=1}^3 w_i \ 
\cM^\mi_i (w) \, ,  
\vspace{-4pt} \\
\displaystyle \mathscr S^{\mi B^*}_{\mv 0} \, U^\mi_{\mv} := - \, d \, (B^* \cdot \nabla ) (\mathscr K^\mi_{-1} \, B^*) + (w \cdot 
\nabla) (\mathscr K^\mi_{-1} \, B^*) + \sum_{i=1}^3 B^*_i \ \cM^\mi_i (w) \, ,
\end{array} $$
with $ \cM^\mi_i $ defined as in Lemma \ref{elldpourw}. By combining Lemmas \ref{ellde2} and \ref{elldpourw}, we obtain that 
$ \mathscr S^{\mi}_{\mv 0} $ is a (non linear) pseudo-differential operator of order zero. Hence, it can be viewed as a source term. 
Observe that $ B $ has disappeared from (\ref{syssimplinet}). There is no longer any 
need for (\ref{lienBB*2}), whereas (\ref{divfreeini}) becomes
\begin{equation}\label{divfreekeep}  
\nabla \cdot w = 0 \, , \qquad \nabla \cdot B^* = 0 \, .
\end{equation}
For $ 0 < d_e \ll 1 $, the inertial modifications appear at the level of (\ref{compressibledeb}) as perturbative terms. As such, the impact of inertial 
terms could seem to be marginal. But this is not so:
\begin{itemize}
\item [-]At high frequencies (for $ \vert \xi \vert \geq 1/d_e $), as suggested by (\ref{compressible}), the inertial contributions 
compete with the other influences.
\item [-]The constitutive relation (\ref{lienBB*inideb}) induces (through Lemma \ref{ellde2}) a complete reordering of the unknowns. The change is 
brutal from $ d_e = 0 $ to $ d_e > 0 $. Once $ d_e > 0 $, the terms which manage in standard MHD the Alfven and Magnetosonic waves are relegated 
inside the source term $ \mathscr S^{\mi}_{\mv 0} \, U_{\mv} ^\mi $, where they play the role of zero order contributions. Still, they participate to lower order dispersive 
effects.
\item [-]In XMHD, new terms become predominant. Emphasis is given to the symmetric part which, in the left part of (\ref{syssimplinet}), involves 
$ \mathscr K^\mi_{-1} $.
\end{itemize} 

\noindent In other words, the passage from (\ref{compressibledeb}) to (\ref{compressible}), and especially from (\ref{compressible}) to 
(\ref{syssimplinet}), is very singular (there is no smooth transition from $ d_e = 0 $ to $ d_e > 0 $). It makes appear the (hidden) hyperbolic 
structure of XMHD. The consequence in terms of the occurrence and organization of waves is as explained just after Theorem \ref{theoprin}.

\begin{rem} \label{Inertialobs} [Energy spectra] In \cite{ALM16,MLM17}, using a Kolmogorov-like analysis and hypotheses (regarding the energy and 
helicity cascades), the authors obtain the energy spectra of XMHD in different (ideal, Hall and inertial) regimes. This study confirms that many types 
of waves overlap in XMHD, while inertial features can overtake at high frequencies. 
\end{rem}


\subsubsection{Inertial waves} \label{The inertial waves} Excluding for the moment the coupling induced by the source terms and assuming 
that $ d_i= 0 $ (so that $ d=0$), the system (\ref{syssimplinet}) reduces to 
\begin{equation}\label{syssimplinetsecond}  
\left \lbrace \begin{array}{l}
\part_t w + (v \cdot \nabla ) w + ( \mathscr K^\mi_{-1} \, B^* \cdot \nabla ) B^* = 0 \, ,  \medskip \\
\displaystyle \part_t B^* +  ( v \cdot \nabla \bigr) B^* + \bigl( \mathscr K^\mi_{-1} \, B^* \cdot \nabla \bigr) w = 0 \, .
\end{array} \right. 
 \end{equation}
Noting $ B^0_{\pm} := B^* \pm w $, this is the same as a nonlinear coupled system of two transport equations, namely
$$ \part_t B^0_{\pm} + (v \cdot \nabla ) B^0_{\pm} \pm \frac{1}{2} \ \bigl( \mathscr K^\mi_{-1} \, (B^0_+ + B^0_-) \cdot \nabla \bigr) B^0_\pm 
= 0 \, . $$
We can immediately recognize two distinct eigenvalues (which provide a first access to  inertial waves), each of multiplicity $ 3 $, which are
\begin{equation}\label{vpkappabis}  
\lambda_\pm \equiv \lambda_\pm (v,B_+^0, B_-^0, \xi) := v \cdot \xi \pm \frac{1}{2} \ \mathscr K^\mi_{-1} \, (B^0_+ + B^0_-) \cdot \xi \, . 
 \end{equation}
These eigenvalues $ \lambda_\pm $ are formally genuinely nonlinear in the sense that
$$ ( \tilde B^0_\pm \cdot \nabla_{B^0_\pm} ) \lambda_\pm (v,B_+^0, B_-^0, \xi) = \pm \frac{1}{2} \ \mathscr K^\mi_{-1} \, \tilde B^0_\pm\ 
\cdot \xi 
\not \equiv 0 \, .  $$
It turns out that the above elementary diagonalisation procedure can be generalized to the whole system. Indeed, with
    \begin{equation}\label{vpkappaccd}  
   \qquad   B_\pm^d := B^\ast + \kappa_\pm^d \, \nabla \times v \, , \qquad v_\pm^d := v-\kappa_\mp^d \, \nabla \times B \, , \qquad 
   \kappa_\pm^d := \frac{1}{2} \ \Bigl( d \pm \sqrt{d^2 + 4} \Bigr) \, , 
       \end{equation}
    the incompressible XMHD equations \eqref{syssimpli} can be recast as
 \begin{equation}\label{bdazkh}  
 \partial_t B_\pm^d + \nabla \times (B_\pm^d \times v_\pm^d) =0 \, .
     \end{equation}    
The formulation (\ref{vpkappaccd})-(\ref{bdazkh}) is implicit in \cite{ALM16,LMM16}. From (\ref{vpkappaccd}), following the preceding lines, we can 
extract 
        \begin{equation}\label{bdazkhencore}  
 v_\pm^d = \nabla \times (-\Delta)^{-1}\bigg(\frac{B_+^d -B_-^d}{\kappa_+^d -\kappa_-^d}\bigg) - \kappa_\mp^d \ \nabla \times (1-\Delta)^{-1}
 \bigg(\frac{\kappa_+^d \, B_-^d - \kappa_-^d \, B_+^d}{\kappa_+^d -\kappa_-^d}\bigg) \, .
    \end{equation}
   In other words, incompressible XMHD can also be seen as two incompressible transport equations on $B_\pm^d$ with velocities $v_\pm^d$, 
   where the latter are given in terms of $B_\pm^d$ by the generalized Biot--Savart type laws (\ref{bdazkhencore}). The unknowns $ B_\pm^d = 
   B^\ast + \kappa_\pm^d \, w $ are made of adequate linear combinations of $ B^* $ and $ w $, together with a link to $ v $ and therefore 
   $ v_\pm $ (in order to close the system). As in (\ref{syssimplinet}), the unknowns are in fact the components of $ U^\mi_{\mv}  $. As in 
   (\ref{syssimplinet}), the system (\ref{bdazkh}) completed with (\ref{bdazkhencore}) is a quasilinear symmetric system whose both coefficients 
   and source terms take the form of zero order pseudo-differential operators. Working with (\ref{syssimplinetsecond}) or (\ref{bdazkh}) are two 
   equivalent options. In this text, we select the approach through (\ref{syssimplinetsecond}). 

\smallskip

\noindent At the level of (\ref{syssimplinet}),  in terms of polarization, the influence of $ v $ and $ d_i $ is just diagonal, while the impact of 
$ d_e $ is not. Let us now assume that $ d > 0 $. Select some special solution $ (\bar w , \bar B^*) $  to the system (\ref{syssimplinetsecond}). 
We can consider the (one order part of the) linearized equations along $ (\bar w , \bar B^*) $ associated with (\ref{syssimplinet}), which are
\begin{equation}\label{syssimplinetsecondlinearized}  
\left \lbrace \begin{array}{l}
\part_t \dot w + (\bar v \cdot \nabla ) \dot w + ( \mathscr K^\mi_{-1} \, \bar B^* \cdot \nabla ) \dot B^* = 0 \, ,  \medskip \\
\displaystyle \part_t \dot B^* +  \bigl( ( \bar v - d \, \mathscr K^\mi_{-1} \, \bar B^* ) \cdot \nabla \bigr) \dot B^* + \bigl( 
\mathscr K^\mi_{-1} \, \bar B^* \cdot \nabla \bigr) \dot w = 0 \, .
\end{array} \right. 
 \end{equation}

\begin{defi} \label{definertial} The inertial waves (related to the choice of $ \bar w $ and $ \bar B^* $) are carried by the two eigenvalues 
$ \lambda_\pm (\xi) $ which are each with multiplicity $ 3 $ of the linear hyperbolic system (\ref{syssimplinetsecondlinearized}), namely 
\begin{equation}\label{vpkappa}  
\lambda_\pm \equiv \lambda_\pm (\xi) := v \cdot \xi - \kappa_\pm^d \ (\mathscr K^\mi_{-1} \, \bar B^*) \cdot \xi \, , \qquad \kappa_\pm^d
:= \frac{1}{2} \ \big( d \pm \sqrt{d^2 + 4}\big) \, .
\end{equation}
\end{defi}

\noindent To observe experimentally inertial waves, two conditions must be fulfilled:
\begin{itemize}
\item [-] The plasma must be sufficiently energetic to trigger high frequencies $ \vert \xi \vert \geq 1/d_e $.
\item [-] The data must be expressed in terms of $ w $ and $ B^* $ (or even better $ B^d_\pm $). Indeed, information collected just in terms of 
$ v $ could be difficult to interpret.
\end{itemize}
 

\subsection{Proof of Theorem \ref{wellposednessideal}} \label{lwellposedin} We start by showing Theorem \ref{wellposednessideal}
under the more restrictive regularity assumption $ s > 7/2 $. We refer to Section \ref{potentialformulations} for the optimal result. In 
particular, at time $ t = 0 $, we know that (with $ \tilde s := s-1 $)
\begin{equation}\label{inidatasimpl}  
U^\mi_{\mv}  (0,\cdot) = U^\mi_{\mv 0} = (w_0, B^*_0) \in \cD^{\tilde s} (\RR^3;\RR^3)^3 \, , \quad w_0 := \nabla \times v_0 \, , \quad 
\tilde s > 5/2  \, .
\end{equation}
Any smooth solution to (\ref{divfreeini})-(\ref{syssimpli})-(\ref{lienBB*2})-(\ref{reginidatain}) leads to a solution to 
(\ref{syssimplinet})-(\ref{divfreekeep})-(\ref{inidatasimpl}), and conversely. We study below the time evolution of 
the $ L^2 $-norm of $ U^\mi_{\mv}  $ (assuming for the moment that $ U^\mi_{\mv}  $ is bounded in the large $ \cH^{\tilde s} $-norm).

\begin{lem} \label{L2inestimate} [$L^2 $-energy estimate for the incompressible vorticity formulation] Let $ T > 0 $.  Assume that the function 
$ U^\mi_{\mv}  \in C([0,T];\cD^{\tilde s}) $ with $ {\tilde s} > 5/2 $ is a solution to (\ref{syssimplinet}) with initial data (\ref{inidatasimpl}). 
Then, we can find a constant $ C $ depending only on the $ C([0,T];\cH^{\tilde s}) $-norm of $ U^\mi_{\mv}  $ such that
\begin{equation}\label{0-energy estimate}  
\parallel U^\mi_{\mv}  (t,\cdot) \parallel_{L^2} \leq \parallel U^\mi_{\mv 0} \parallel_{L^2} \, e^{C \, t} \, , \qquad \forall \, t \in [0,T]  \, .
\end{equation}
\end{lem}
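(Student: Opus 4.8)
The plan is to perform a standard $L^2$-energy estimate on the system (\ref{syssimplinet}), exploiting the fact that the principal (first-order) part has the symmetric structure of coupled transport equations, so that the dangerous terms integrate by parts to zero up to zero-order commutators, while the remainder $\mathscr S^\mi_{\mv0}U^\mi_\mv$ is genuinely of order zero and hence bounded in $L^2$ by the $\cH^{\tilde s}$-norm of $U^\mi_\mv$. Concretely, I would take the $L^2$-scalar product of the $w$-equation with $w$ and of the $B^*$-equation with $B^*$, add them, and track each term.

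First I would handle the transport terms $(v\cdot\nabla)w$ and $\big((v-d\,\mathscr K^\mi_{-1}B^*)\cdot\nabla\big)B^*$: integrating by parts, $\int (a\cdot\nabla)f\cdot f\,dx = -\tfrac12\int(\nabla\cdot a)\,|f|^2\,dx$, so these contribute $\lesssim \|\nabla\cdot v\|_{L^\infty}\|w\|_{L^2}^2 + \|\nabla\cdot(v-d\,\mathscr K^\mi_{-1}B^*)\|_{L^\infty}\|B^*\|_{L^2}^2$; since $\nabla\cdot v=0$ by (\ref{divfreekeep}) and (\ref{Biot-Savart}), and $\mathscr K^\mi_{-1}B^*$ is a zero-order image of $B^*$ (by (\ref{ell-1}) and Lemma \ref{ellde2}), Sobolev embedding $\cH^{\tilde s}\hookrightarrow W^{1,\infty}$ for $\tilde s>5/2$ bounds these by $C\|U^\mi_\mv\|_{L^2}^2$ with $C$ depending only on $\|U^\mi_\mv\|_{C([0,T];\cH^{\tilde s})}$. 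Next, the cross terms $\int(\mathscr K^\mi_{-1}B^*\cdot\nabla)B^*\cdot w\,dx + \int(\mathscr K^\mi_{-1}B^*\cdot\nabla)w\cdot B^*\,dx$: writing $a:=\mathscr K^\mi_{-1}B^*$, this is $\int(a\cdot\nabla)(w\cdot B^*)\,dx = -\int(\nabla\cdot a)\,(w\cdot B^*)\,dx$, again bounded by $\|\nabla\cdot a\|_{L^\infty}\|w\|_{L^2}\|B^*\|_{L^2}$, and $\nabla\cdot a = \nabla\cdot(\mathscr K^\mi_{-1}B^*)$ is a zero-order operator applied to $B^*$ (indeed $\mathscr K^\mi_{-1}=\mathscr K^\mi_{-1}\mathscr P$ is curl-valued, so $\nabla\cdot\mathscr K^\mi_{-1}=0$ exactly, killing this term outright). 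Thus the entire first-order part contributes at most $C\|U^\mi_\mv\|_{L^2}^2$.

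Then I would bound the source term: by combining Lemmas \ref{ellde2} and \ref{elldpourw}, $\mathscr S^\mi_{\mv0}$ is a nonlinear pseudo-differential operator of order zero whose coefficients are controlled by $\|U^\mi_\mv\|_{\cH^{\tilde s}}$, so $\|\mathscr S^\mi_{\mv0}U^\mi_\mv\|_{L^2}\le C\|U^\mi_\mv\|_{L^2}$ with $C$ depending only on $\|U^\mi_\mv\|_{C([0,T];\cH^{\tilde s})}$; pairing against $U^\mi_\mv$ in $L^2$ gives another $C\|U^\mi_\mv\|_{L^2}^2$. Collecting everything, $\tfrac{d}{dt}\|U^\mi_\mv(t,\cdot)\|_{L^2}^2 \le 2C\|U^\mi_\mv(t,\cdot)\|_{L^2}^2$, and Grönwall's inequality yields (\ref{0-energy estimate}). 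The main obstacle is really bookkeeping rather than analysis: one must verify carefully that every coefficient appearing multiplied by a first-order derivative is divergence-free or has $L^\infty$-controlled divergence (which is where the identity $\mathscr K^\mi_{-1}=(\mathscr L^\mi_2)^{-1}\nabla\times$ and the solenoidal constraints (\ref{divfreekeep}) are used), and that the order-zero claim for $\mathscr S^\mi_{\mv0}$ from Lemmas \ref{ellde2}–\ref{elldpourw} is invoked with the correct dependence of constants on the $\cH^{\tilde s}$-norm; once that is in place the estimate is immediate.
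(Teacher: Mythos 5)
Your proposal is correct and follows essentially the same route as the paper's proof: multiply by $w$ and $B^*$, use $\nabla \cdot v = 0$ and the exact identity $\nabla \cdot (\mathscr K^\mi_{-1} B^*) = 0$ (since $(\mathscr L^\mi_2)^{-1}$ commutes with $\nabla\cdot$ and $\nabla\cdot\nabla\times \equiv 0$) to annihilate all first-order contributions, bound the zero-order source via Lemmas \ref{ellde2} and \ref{elldpourw} together with the Sobolev embedding, and conclude by Gr\"onwall. The only cosmetic difference is that you estimate $\Vert \mathscr S^\mi_{\mv 0} U^\mi_{\mv} \Vert_{L^2}$ first and then pair, whereas the paper bounds the pairing term by term; both yield the same constant dependence.
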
 

\begin{proof} Multiply the first and second equation of (\ref{syssimplinet}) respectively by $ w $ and $ B^* $, and then integrate 
with respect to $ x $. Since $ v \in \cD^{\tilde s} $, the contributions issued from the (transport) diagonal part involving $ v \cdot \nabla $ 
disappear. After integrations by parts, there remains 
 \begin{multline*}
   \frac{1}{2} \ \frac{d}{dt}  \bigg( \int_{\RR^3} \vert U^\mi_{\mv}  (t,\cdot)  \vert^2 \ dx  \bigg) 
    = -  \ \frac{d}{2} \, \int_{\RR^3} \nabla \cdot \bigl( \mathscr K^\mi_{-1} \, B^* \bigr) \ \vert B^* \vert^2 \ dx
    \\ + \int_{\RR^3} \nabla \cdot \bigl( \mathscr K^\mi_{-1} \, B^* \bigr) \ (w \cdot B^* ) \ dx
    +  \int_{\RR^3} U^\mi_{\mv}  \cdot \mathscr S^\mi_{\mv 0} U^\mi_{\mv}  \, dx \, .
  \end{multline*}
The Fourier multiplier $ (\mathscr L^\mi_2)^{-1} $ commutes with $ \nabla \cdot $, while $ \nabla \cdot \nabla \times \equiv 0 $. Thus
$$ \frac{1}{2} \ \frac{d}{dt} \bigg( \int_{\RR^3} \vert U^\mi_{\mv}  (t,\cdot)  \vert^2 \, dx  \bigg) = \int_{\RR^3} U^\mi_{\mv}  \cdot \mathscr S^\mi_{\mv 0}
U^\mi_{\mv}  \ dx \, .$$
Below, we use the Sobolev embedding theorem $ \cH^{\tilde s} \hookrightarrow L^\infty $ (knowing that $ \tilde s > 5/2 $). We exploit the condition 
$ \nabla \cdot v = 0 $ to deal with the sum of products $ w_i \ \cM^\mi_i (w) $. We also implement Lemmas \ref{ellde2} and \ref{elldpourw} to get 
$$ \begin{array}{rl}
\displaystyle \bigg \vert \int_{\RR^3} w \cdot \mathscr S^{\mi w}_{\mv 0} U^\mi_{\mv}  \, dx \bigg \vert \leq \! \! \! & \displaystyle  
\parallel w \parallel_{L^\infty} \ \sum_{i=1}^3 \bigl( \parallel B^*_i \parallel_{L^2} \ \parallel \part_i  \mathscr K^\mi_{-1} \, B^*\parallel_{L^2} + 
\parallel w_i \parallel_{L^2} \ \parallel \cM^\mi_i (w) \parallel_{L^2} \bigr) \smallskip \\
\lesssim  \! \! \! & \displaystyle \parallel U^\mi_{\mv}  \parallel_{C([0,T];\cH^{\tilde s})} \ \parallel U^\mi_{\mv}  \parallel_{L^2}^2 \, ,
\end{array} $$
as well as 
$$ \begin{array}{rl}
\displaystyle \bigg \vert \int_{\RR^3} B^* \cdot \mathscr S^{\mi B^*}_{\mv 0} U^\mi_{\mv}  \, dx \bigg  \vert \leq \! \! \! & \displaystyle d \ \parallel B^* 
\parallel_{L^\infty} \sum_{i=1}^3 \parallel B^*_i \parallel_{L^2} \, \parallel \part_i \mathscr K^\mi_{-1} \, B^* \parallel_{L^2} \smallskip \\
\ & \displaystyle + \, \parallel B^* \parallel_{L^\infty} \sum_{i=1}^3 \bigl( \parallel w_i \parallel_{L^2} \, \parallel \part_i \mathscr K^\mi_{-1} \, B^* 
\parallel_{L^2} + \parallel B^*_i \parallel_{L^2} \, \parallel \cM^\mi_i (w) \parallel_{L^2} \bigr) \smallskip \\
\lesssim \! \! \! & \displaystyle \parallel U^\mi_{\mv}  \parallel_{C([0,T];\cH^{\tilde s})} \ \parallel U^\mi_{\mv}  \parallel_{L^2}^2 \, . 
\end{array} $$
By Gr\"onwall's inequality, we recover (\ref{0-energy estimate}).
\end{proof}

\noindent The proof of Lemma \ref{L2inestimate} serves to confirm that the source term is indeed of order $ 0 $. To go further, we have 
to write down a scheme \cite{Alin,Maj84} in order to use a fixed-point method. To this end, we need to implement the linearized version 
of (\ref{syssimplinet}). Then, we have to perform energy estimates in order to obtain a control in the large norm $ L^\infty ([0,T];\cH^{\tilde s}) $, 
and a convergence in the small norm $ L^\infty ([0,T];L^2) $. 

\noindent When doing this, the coefficients (which are transparent in the above proof) are implied. The only difficulty could come from 
the operator $ (\mathscr K^\mi_{-1} \, B^*) \cdot \nabla $ but the coefficient $ \mathscr K^\mi_{-1} \, B^* $ 
is of order $ 0 $ (and even of order $ -1 $) as required. Thus, $ L^2 $-energy estimates are available for the linearized equations along 
the same lines as above. 

\noindent To get $ \cH^{\tilde s} $-bounds, we have to commute the linearized equation with spatial derivatives $ \part^\alpha_x $ with 
$ \vert \alpha \vert \leq \tilde s $, and exploit linear estimates of nonlinear functions. This falls under the scope of the general strategy 
\cite{Alin,Maj84} to solve quasilinear symmetric systems. The details, which are standar and long, are not reproduced here. 
The conclusion is that the Cauchy problem associated with (\ref{syssimplinet}) is well-posed in $ \cH^{\tilde s} $ for $ \tilde s > 5/2 $.

\noindent From the $ \cH^{\tilde s} $-solutions to (\ref{syssimplinet}) with $ \tilde s > 5/2 $, we recover solutions to (\ref{syssimpli}), 
which are such that $ (v,B^*) (t,\cdot) \in \cH^s \times \cH^{s-1} $ with $ s := \tilde s +1 > 7/2 $. Moreover, from Lemma \ref{ellde2} together 
with (\ref{ell-1}), we obtain that $ B (t,\cdot) \in \cH^{s+1} $. This concludes the proof of Theorem \ref{wellposednessideal} at least on 
condition that $ s > 7/2 $.

\begin{rem} \label{versus} [Propagated $ L^2 $-energy for (\ref{syssimplinet}) versus conserved quantity for (\ref{syssimpli})] From Lemma 
\ref{ellde2}, we know that $ \parallel \nabla \times B \parallel_{L^2} = \parallel \mathscr K^\mi_{-1} \, B^* \parallel_{L^2} \lesssim \parallel 
B^* \parallel_{L^2} $.
It is clear that, with $ \cE^\mi $ as in (\ref{conservedprop}), we have $ \cE^\mi \lesssim \parallel U^\mi_{\mv}  \parallel_{L^2} $. 
The opposite is false. In other words, Lemma \ref{L2inestimate} is not a corollary of Lemma \ref{energypreserin}.
\end{rem}


\section{The compressible framework} \label{vorcompressibleframework} In this section, $ p: \RR_+ \rightarrow \RR $ is a given strictly 
increasing smooth function of $ \rho $. We extend here (\ref{reginidatain}) by putting aside the condition $ \nabla \cdot B_0^* = 0 $. As a 
matter of fact, we consider general vector fields $ B^* $. This is made possible by the following remark.

\begin{lem} \label{preservationB*} [Conservation of the magnetic divergence] Any solution to (\ref{compressible})-(\ref{lienBB*ini})-(\ref{inidata}) 
is such that 
\begin{equation}\label{divfreepreser}  
 \nabla \cdot B^* = \nabla \cdot B = \nabla \cdot B^*_0 \, .
 \end{equation}
\end{lem}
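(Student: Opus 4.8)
The plan is to read off both identities directly from the structure of the equations, using only the elementary fact that $\nabla\cdot(\nabla\times F)=0$ for any (sufficiently regular) vector field $F$.

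First I would take the divergence of the third equation of (\ref{compressible}). Both the Hall/ideal flux term $\nabla\times\bigl(B^*\times(v-d\,\tfrac{\nabla\times B}{\rho})\bigr)$ and the inertial flux term $\nabla\times\bigl((\nabla\times v)\times\tfrac{\nabla\times B}{\rho}\bigr)$ are exact curls, so their divergences vanish identically. What remains is $\partial_t(\nabla\cdot B^*)=0$, and since this holds pointwise in $x$, integrating in time against the initial condition (\ref{inidata}) yields $\nabla\cdot B^*(t,\cdot)=\nabla\cdot B^*(0,\cdot)=\nabla\cdot B^*_0$ for all $t$ in the interval of existence.

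Next I would take the divergence of the constitutive relation (\ref{lienBB*ini}), namely $B^*=B+\nabla\times\bigl(\tfrac{\nabla\times B}{\rho}\bigr)$. Again the second term on the right is an exact curl, so $\nabla\cdot B^*=\nabla\cdot B$ identically in $(t,x)$. Chaining the two identities gives the full chain $\nabla\cdot B^*=\nabla\cdot B=\nabla\cdot B^*_0$, which is (\ref{divfreepreser}).

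I do not expect a genuine obstacle here: the only point requiring a word is that for the class of solutions under consideration (smooth in the sense of (\ref{sensesmooth}), in particular $B\in C([0,T];\cH^{s+1})$ with $s>5/2$) every operation above — taking curls, divergences, and the time derivative, and commuting $\partial_t$ with $\nabla\cdot$ — is justified, so that the formal computation is rigorous. One may note in passing that (\ref{divfreepreser}) is exactly what licenses, in the compressible setting, dropping the a priori constraint $\nabla\cdot B^*_0=0$ from (\ref{reginidatain}) while still controlling $\nabla\cdot B^*$ through its initial value, as used later in Subsection \ref{lwellposedcomp}.
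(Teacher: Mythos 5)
Your argument is correct and is exactly the paper's (one-line) proof, simply spelled out: the curl structure of the third equation of (\ref{compressible}) gives $\part_t(\nabla\cdot B^*)=0$, and taking the divergence of (\ref{lienBB*ini}) identifies $\nabla\cdot B^*$ with $\nabla\cdot B$. Nothing further is needed.
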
 

\begin{proof} This is just because $ \part_t (\nabla \cdot B^*) = 0 $. 
\end{proof}

\noindent The whole vector field $ B^* $ (resp. $ B $) can be reconstituted from $  \nabla \cdot B^* $ and $ \nabla \times B^* $ (resp. 
from $ \nabla \cdot B $ and $ \nabla \times B $) by solving the div-curl system (see Subsection \ref{div-curlsystem}). The parts 
$ \nabla \cdot B^* $ and $ \nabla \cdot B $ are determined by (\ref{divfreepreser}). In particular, with $ \mathscr P = P(D_x) $ where 
$ P $ is as in (\ref{l2projector}), retain that
\begin{equation}\label{retainthat}   
B^* = \mathscr Q \, B_0^* + \mathscr P \, B^* \, , \qquad \mathscr Q = \id - \mathscr P \, . 
 \end{equation}
In other words, replacing everywhere $ B^* $ as indicated above, the system (\ref{compressible}) reduces to an equation on 
$ (\rho,v,\mathscr P B^*) $, while the constitutive relation (\ref{lienBB*ini}) is aimed to deduce  $ \mathscr P B $ from $ \mathscr P B^* $, 
or equivalently $ \nabla \times B $ from $ \nabla \times B^* $. 

\smallskip

\noindent Lemma \ref{preservationB*} is straightforward. It is however highlighted because it plays a crucial role for the 
reason explained in the remark below.

\begin{rem} \label{yeti} [A consequence of the foliation by vector fields having a fixed divergence] Let $ C $ be a smooth vector field viewed 
as a coefficient. From (\ref{followingidentity}), we have the decomposition 
\begin{equation}\label{decompoooo}  
\mathscr T_C \, B^* \equiv \mathscr T \, B^* := \nabla \times (C \times B^*) = \mathscr T_1 \, B^* + \mathscr T_0 \, B^*  
 \end{equation}
with
\begin{equation}\label{definitionuf}  
\mathscr T_1 \, B^* := (\nabla \cdot B^*) \ C - (C \cdot \nabla) B^* \, , \qquad \mathscr T_0 \, B^* := (B^* \cdot \nabla) C -  (\nabla \cdot C) \ B^* \, . 
 \end{equation}
The operator $ \mathscr T_0 $ is of order $ 0 $, while $ \mathscr T_1 $ is of order $ 1 $. The action of $ \mathscr T_1 $ is not skew-adjoint. 
As such, it is not compatible with energy estimates. However, knowing (\ref{divfreepreser}), we should opt for $ \mathscr T \, B^* = \tilde 
{\mathscr T}_1 \, B^* + \tilde {\mathscr T}_0 \, B^* $ with
\begin{equation}\label{definitionufbis}  
\tilde {\mathscr T}_1 \, B^* := - (C \cdot \nabla) B^* \, , \qquad \tilde {\mathscr T}_0 \, B^* := (\nabla \cdot B^*_0) \ C + (B^* \cdot \nabla) C -  
(\nabla \cdot C) \ B^* \, . 
 \end{equation}
The operator $ \tilde {\mathscr T}_1 $ is skew-adjoint. Contrary to $ {\mathscr T}_1 $, it can be dealt with in the energy estimates without losses 
of derivatives. This trick will be repeatedly used. As a matter of fact, we will systematically replace $ \nabla \cdot B^* $ by $ \nabla \cdot B^*_0 $. 
 \end{rem}

\noindent In order to make the transition from $ \nabla \times B^* $ to $ \nabla \times B $, we have to exploit conveniently the constitutive relation (\ref{lienBB*ini}).  
To this end, we follow a plan similar to Section \ref{vorcasincompressible}. In Subsection \ref{transformationcompressible}, we come back to the 
content of (\ref{lienBB*ini}) but this time when $ \rho $ is a non constant function. In Subsection \ref{transbiscompressible}, we adapt to the compressible 
context the change of variables of Subsection \ref{transbis}. In Subsection \ref{lwellposedcomp}, we derive energy estimates to show Theorem 
\ref{theoprin} (for $ s > 7/2 $).  At each stage, in comparison with Section \ref{vorcasincompressible}, we need to implement important and difficult modifications. 


\subsection{The compressible constitutive relation} \label{transformationcompressible} The difficulty here is to exploit (\ref{lienBB*ini}) in 
order to express $ \mathscr P B $ in terms of $ \mathscr P B^* $. In this subsection, we fix a time $ t \in \RR_+ $, and we assume that the 
function $ \rho (t,\cdot) : \RR^3 \rightarrow \RR$ is bounded and positive. More precisely, we impose
\begin{equation}\label{minorationrho}  
\exists \, (c,C) \in \RR^2 \, ; \qquad 0 < c \leq \rho(t,x) \leq C \, , \qquad \forall \, x \in \RR^3  \, .
\end{equation}
We also suppose that the function $ \rho (t,\cdot) $ is smooth enough, say in $ \cH^s (\RR^3) $ with $ s > 7/2 $. By this way, we can use the 
pseudo-differential calculus with coefficients in $ \cH^s $, as developed for instance in \cite{Marschall, Tay91}. In what follows, we will sometimes
omit to mention the presence of $ t $. From (\ref{lienBB*ini}), we get that
\begin{equation}\label{lienBB*inire}  
 \nabla \times B^* = \mathscr  L^{\mc}_2 \, \bigg( \frac{\nabla \times B}{\rho(x)} \bigg) \, , \qquad \mathscr  L^{\mc}_2 := \rho(x) \, \id + \nabla \times 
\nabla \times \, . 
\end{equation}
We look at $ \mathscr  L^{\mc}_2  : L^2 (\RR^3;\RR^3) \rightarrow L^2 (\RR^3;\RR^3) $ as an unbounded operator \cite{CheRay}. 

\begin{lem} \label{zeroorder} [Inverse of $ \mathscr  L^{\mc}_2 $] The operator $ (\mathscr  L^{\mc}_2)^{-1} : L^2 \rightarrow L^2 $ is well-defined
and bounded. 
\end{lem}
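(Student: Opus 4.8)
The plan is to show that the unbounded operator $\mathscr L^{\mc}_2 = \rho(x)\,\id + \nabla\times\nabla\times$ on $L^2(\RR^3;\RR^3)$ is invertible with bounded inverse by establishing that it is self-adjoint, nonnegative, and bounded below. First I would fix the natural domain, namely $D(\mathscr L^{\mc}_2) = \{B \in L^2 ; \nabla\times\nabla\times B \in L^2\}$, and note that since $\rho$ satisfies (\ref{minorationrho}) the multiplication operator $B \mapsto \rho(x) B$ is bounded and self-adjoint, so it is a bounded (indeed relatively bounded with relative bound $0$) symmetric perturbation of the self-adjoint operator $\nabla\times\nabla\times$. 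The operator $\nabla\times\nabla\times$ is self-adjoint and nonnegative on its natural domain: this can be seen on the Fourier side, where, exactly as in the proof of Lemma~\ref{ellde2}, it is unitarily equivalent via conjugation by $\mathscr O_0$ to the Fourier multiplier $\mathrm{diag}(0,|\xi|^2,|\xi|^2)$. By the Kato--Rellich theorem, $\mathscr L^{\mc}_2$ is then self-adjoint on $D(\nabla\times\nabla\times)$.

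Next I would prove the lower bound. For $B \in D(\mathscr L^{\mc}_2)$ we compute
\begin{equation}\label{lowerboundLc}
\langle \mathscr L^{\mc}_2 B, B \rangle = \int_{\RR^3} \rho(x)\,|B|^2\,dx + \int_{\RR^3} |\nabla\times B|^2\,dx \geq c\,\|B\|_{L^2}^2 \, ,
\end{equation}
using that the curl term is nonnegative (integration by parts, with the boundary term vanishing for $B$ in the domain, which contains the Schwartz class as a core) and that $\rho \geq c > 0$. Since $\mathscr L^{\mc}_2$ is self-adjoint, (\ref{lowerboundLc}) together with the Cauchy--Schwarz inequality gives $\|\mathscr L^{\mc}_2 B\|_{L^2} \geq c\,\|B\|_{L^2}$, so $\mathscr L^{\mc}_2$ is injective with closed range. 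Self-adjointness also forces the range to be dense (its orthogonal complement is the kernel, which is trivial), hence the range is all of $L^2$. Therefore $(\mathscr L^{\mc}_2)^{-1}$ is everywhere defined, and from $\|\mathscr L^{\mc}_2 B\| \geq c\|B\|$ we get $\|(\mathscr L^{\mc}_2)^{-1}\|_{L^2\to L^2} \leq 1/c$, which is the claim.

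The main obstacle, I expect, is the functional-analytic bookkeeping around the unbounded operator: pinning down the correct domain of $\nabla\times\nabla\times$, checking it is self-adjoint there (as opposed to merely symmetric) and that Schwartz functions form a core so the integration by parts in (\ref{lowerboundLc}) is legitimate. Once self-adjointness and the coercivity estimate (\ref{lowerboundLc}) are in hand, the conclusion is immediate. A cleaner alternative that sidesteps some of this is the Lax--Milgram route: consider the bounded, coercive (by the same computation as (\ref{lowerboundLc})), symmetric bilinear form $a(B,B') := \int \rho\,B\cdot B' + \int (\nabla\times B)\cdot(\nabla\times B')$ on the Hilbert space $H(\mathrm{curl}) = \{B \in L^2 ; \nabla\times B \in L^2\}$; Lax--Milgram yields, for every $f \in L^2$, a unique $B \in H(\mathrm{curl})$ with $a(B,B') = \langle f, B'\rangle$ for all $B' \in H(\mathrm{curl})$, which is exactly the weak form of $\mathscr L^{\mc}_2 B = f$, and the coercivity constant $c$ gives the bound $\|B\|_{L^2} \leq \|f\|_{L^2}/c$. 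Either way the ellipticity of $\rho\,\id$ in the low-frequency directions (where $\nabla\times\nabla\times$ degenerates) is what saves the day, mirroring the role of the $\id$ term in Lemma~\ref{ellde2}.
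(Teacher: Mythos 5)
Your argument is correct, and it reaches the conclusion by a route that differs from the paper's in how it passes from coercivity to invertibility. Both proofs start from the same observation, namely the estimate (\ref{elliordre0}): $ \langle \mathscr L^{\mc}_2 \, u , u \rangle = \int \rho \, \vert u \vert^2 + \int \vert \nabla \times u \vert^2 \geq c \, \Vert u \Vert_{L^2}^2 $ under (\ref{minorationrho}). The paper then avoids identifying the self-adjoint realization altogether: it factorizes $ \mathscr L^{\mc}_2 = (\mathscr X^{\mc})^* \, \mathscr X^{\mc} $ via the polar (square-root) decomposition, notes that both $ \mathscr X^{\mc} $ and $ (\mathscr X^{\mc})^* $ are bounded below, and quotes an invertibility theorem for closed densely defined operators (Theorem 3.3.2 in \cite{Dere}) to conclude. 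You instead pin down the self-adjoint realization explicitly -- $ \nabla \times \nabla \times $ is self-adjoint on its maximal domain as a nonnegative matrix Fourier multiplier, and adding the bounded symmetric multiplication by $ \rho $ preserves self-adjointness by Kato--Rellich -- and then run the standard argument that a self-adjoint operator bounded below by $ c > 0 $ is injective with closed, dense range, giving $ \Vert (\mathscr L^{\mc}_2)^{-1} \Vert \leq 1/c $. Your Lax--Milgram variant on $ H(\mathrm{curl}) $ is also sound and arguably the cleanest, since coercivity of the form is exactly the displayed estimate. What your route buys is an explicit description of the domain and a more elementary, self-contained chain of standard facts; what the paper's route buys is brevity and independence from any domain identification, at the price of invoking the polar decomposition and an external theorem. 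Both are valid proofs of the same statement.
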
 

\begin{proof} Observe that $ \mathscr  L^{\mc}_2 $ is symmetric and positive since
\begin{equation}\label{elliordre0}  
\qquad \begin{array}{rl}
\displaystyle \int (\mathscr  L^{\mc}_2 u) (x) \cdot \bar u(x) \ dx = \! \! \! & \displaystyle  \int \rho(x) \ \vert u(x) \vert^2 \ dx  \\
\ & \displaystyle + \int \vert \nabla \times  u(x) \vert^2 \ dx 
\geq c \, \parallel u \parallel_{L^2}^2 \, , \qquad \forall u \in \cH^2 \, .  
\end{array} 
\end{equation}
The polar decomposition furnishes the existence of a densely defined, closed and self-adjoint operator $ \mathscr X^{\mc} : 
L^2 \rightarrow L^2 $ with domain $ \text{Dom} \, (\mathscr X^{\mc}) $ such that $ \mathscr  L^{\mc}_2 = (\mathscr X^{\mc})^* \, \mathscr X^{\mc} $, 
and therefore
$$ \begin{array}{ll}
\parallel \mathscr X^{\mc} \, u \parallel_{L^2} \geq \sqrt c \ \parallel u \parallel_{L^2} \, , \quad & \forall \, u \in \text{Dom} \, (\mathscr X^{\mc}) \, , \medskip\\
\parallel (\mathscr X^{\mc})^* \, u \parallel_{L^2} \geq \sqrt c \ \parallel u \parallel_{L^2} \, , \quad & \forall \, u \in \text{Dom} \, \bigl( (\mathscr X^{\mc})^*\bigr) \, .
\end{array} $$
Starting from there, the two operators $ \mathscr X^{\mc} $ and $ (\mathscr X^{\mc})^* $ are invertible (Theorem 3.3.2 in \cite{Dere}, or see also \cite{CheRay}). 
The same applies to $ \mathscr  L^{\mc}_2 $ with $ (\mathscr L^{\mc}_2)^{-1} = (\mathscr X^{\mc})^{-1} \circ \bigl( (\mathscr X^{\mc})^* \bigr)^{-1} $. 
\end{proof}

\noindent For smooth enough vector fields $ B^* $, the relation (\ref{lienBB*inire}) amounts to the same thing as 
\begin{equation}\label{inverseconstitutive}  
\frac{\nabla \times B}{\rho(x)} = \mathscr K^{\mc}_{-1} \, B^* \, , \qquad \mathscr K^{\mc}_{-1} := (\mathscr  L^{\mc}_2)^{-1} \, \nabla \times \equiv \mathscr 
K^{\mc}_{-1} \mathscr P \, .
\end{equation}
The system (\ref{compressible}) where $ \nabla \times B / \rho $ is replaced everywhere as indicated in (\ref{inverseconstitutive}) is enough 
to recover a self-contained system on $ {}^t (\rho,v,\mathscr P B^*) $. We can progress without introducing $ B $ and without imposing $ \nabla \cdot B^* = 0 $. 
Neither (\ref{lienBB*ini}) nor (\ref{divfreeini}) are needed. It suffices to rely on (\ref{inverseconstitutive}). Still, the passage through (\ref{lienBB*ini}) and 
(\ref{divfreeini}), which is prescribed by physicists, is meaningful. First, it is a way to deduce the final constitutive relation (\ref{inverseconstitutive}). 
Secondly, it is more adapted in view of the potential formulation (in Section \ref{potentialformulations}). Now, one important key in continuity with Lemma 
\ref{ellde2} is to show that the restriction of $ (\mathscr L^{\mc}_2)^{-1} $ to $ \cD^r $ (for well-chosen indices $ r $) still gives rise to a gain of two derivatives. 
In other words, we have to justify the subscript $ -1 $ in $ \mathscr K^{\mc}_{-1} $.

\begin{prop} [A property of ellipticity when going from $ \nabla \times B^* $ to $ \nabla \times B / \rho $ through the constitutive relation 
(\ref{inverseconstitutive})] \label{ellllli} The action of $ (\mathscr  L^{\mc}_2)^{-1} $ is associated with a matrix valued operator whose all coefficients 
are pseudo-differential operators. Its restriction to solenoidal vector fields is elliptic of order less or equal to $ -2 $. More precisely, for $ r \in [s-2,s] $,
the action $ (\mathscr  L^{\mc}_2)^{-1} : \cD^r \rightarrow \cH^{r+2} $ is well-defined and continuous. 
\end{prop}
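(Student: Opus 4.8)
\noindent The plan is to make $\mathscr L^{\mc}_2=\rho(x)\,\id+\nabla\times\nabla\times$ into a genuinely second-order elliptic operator by restricting its inverse to solenoidal inputs. On the Fourier side the (frozen) symbol of $\mathscr L^{\mc}_2$ is $\rho(x)\,\id+|\xi|^2\,P(\xi)$, with $P(\xi)$ the Leray symbol of \eqref{l2projector}: it equals $\rho(x)+|\xi|^2$ on $\xi^\perp$ but only $\rho(x)$ along $\xi$, so the operator is merely of order $0$ in the gradient directions -- and this is exactly the degeneracy that disappears once the data are divergence free. Accordingly, fix $B^*\in\cD^r$ with $r\in[s-2,s]$ and put $u:=(\mathscr L^{\mc}_2)^{-1}B^*$, which lies in $L^2$ by Lemma \ref{zeroorder} (recall $\cD^r\subset L^2$ since $r>0$). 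The goal is to show $u\in\cH^{r+2}$ with $\|u\|_{\cH^{r+2}}\lesssim\|B^*\|_{\cH^r}$, the implied constant depending on $\|\rho\|_{\cH^s}$ and on the bounds $c,C$ of \eqref{minorationrho}.

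\noindent First I would record the two \emph{hidden gains} contained in $\rho u+\nabla\times\nabla\times u=B^*$. Taking the divergence and using $\nabla\cdot(\nabla\times\nabla\times u)=0$ and $\nabla\cdot B^*=0$ gives $\nabla\cdot(\rho u)=0$, hence
\[
\nabla\cdot u=-\,\rho^{-1}\,(\nabla\rho\cdot u),
\]
which is controlled by $u$ in the \emph{same} Sobolev space, with no derivative loss, since $\nabla\rho\in\cH^{s-1}$, $\rho^{-1}\in\cH^s$ and $s>7/2$. Taking instead the curl and setting $w:=\nabla\times u$ -- so that $\nabla\cdot w\equiv0$ and $\nabla\times\nabla\times w=-\Delta w$ -- the identity $\nabla\times(\rho u)=\rho\,w+\nabla\rho\times u$ turns the equation into the genuinely elliptic second-order equation
\[
(-\Delta+\rho(x))\,w=\nabla\times B^*-\nabla\rho\times u .
\]
Since $-\Delta+\rho\ge-\Delta+c$ is uniformly elliptic of order $2$ with coefficient in $\cH^s$, the pseudo-differential (or para-differential) calculus with $\cH^s$ coefficients, as in \cite{Marschall,Tay91}, provides for it a parametrix that gains two derivatives on the Sobolev range allowed by $s>7/2$, modulo a smoothing remainder.

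\noindent Then I would run a bootstrap on the regularity index. Starting from $u\in L^2$: the divergence identity places $\nabla\cdot u$ in the current space; the elliptic estimate for $w$ places $\nabla\times u=w$ one order above the worst of $\nabla\times B^*\in\cH^{r-1}$ and $\nabla\rho\times u$; feeding $w$ and $\nabla\cdot u$ into the div--curl reconstruction of Subsection \ref{div-curlsystem} (the operators $\nabla\times(-\Delta)^{-1}$ and $\nabla(-\Delta)^{-1}$, up to the low-frequency corrections treated there) then recovers $u$ one order higher. Iterating this cycle, and checking at each stage that the coefficient-generated terms $\nabla\rho\cdot u$, $\nabla\rho\times u$ and -- after one further derivative -- the $\nabla^2\rho\in\cH^{s-2}$ contributions are estimated in the relevant target spaces by Moser-type product estimates \cite{Maj84} -- which is where the constraints $r\ge s-2$ and $r\le s$ enter -- one climbs from $L^2$ up to $u\in\cH^{r+2}$ with the asserted bound. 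Reassembling the same construction as a Neumann series around the principal symbol $(\rho(x)+|\xi|^2)^{-1}P(\xi)+\rho(x)^{-1}(\id-P(\xi))$ modulo lower order then exhibits $(\mathscr L^{\mc}_2)^{-1}$ as a matrix of $\cH^s$-coefficient pseudo-differential operators of order $\le0$, of order $\le-2$ once composed with the Leray projector $\mathscr P$; this justifies the subscript $-1$ in $\mathscr K^{\mc}_{-1}=(\mathscr L^{\mc}_2)^{-1}\,\nabla\times$ of \eqref{inverseconstitutive}, in continuity with Lemma \ref{ellde2}.

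\noindent The main obstacle is the limited smoothness of $\rho$: everything above must be carried out inside the $\cH^s$-coefficient calculus rather than the classical one, which forces one to track carefully the Sobolev window in which both the parametrix of $-\Delta+\rho$ and its remainder behave well, and to control the commutators $[\nabla\times,\rho\,\cdot\,]$ and the inhomogeneous terms $\nabla\rho\times u$, $\nabla\rho\cdot u$ with sharp (Kato--Ponce/Moser) product estimates; it is this bookkeeping, fed by $\nabla\rho\in\cH^{s-1}$ and $\nabla^2\rho\in\cH^{s-2}$, that is delicate and that ultimately dictates the admissible range $r\in[s-2,s]$.
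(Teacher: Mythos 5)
Your route is genuinely different from the paper's: instead of conjugating $\mathscr L^{\mc}_2$ by the Fourier frame $\mathscr O_0$, correcting by a Weyl-quantized unitary $e^{i\mathscr A}$ obtained from a homological equation, and inverting the resulting $2\times 2$ elliptic block by an approximate parametrix, you split the equation $\rho u+\nabla\times\nabla\times u=B^*$ into its divergence and curl parts, reduce the curl part to the scalar elliptic operator $\rho-\Delta$ acting on $w=\nabla\times u$, and climb by a bootstrap through the div--curl system. This is more elementary, it isolates correctly where the gain of two derivatives lives (on $\nabla\times u$), and it would in fact suffice for every later use of the proposition in the paper, which only ever invokes $r\le s-2$ (e.g.\ in (\ref{condrerr})). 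The first assertion of the statement, that $(\mathscr L^{\mc}_2)^{-1}$ is a matrix of pseudo-differential operators, is however only gestured at by your closing Neumann-series remark; the paper's conjugation/parametrix construction is precisely what substantiates that part.

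There is nevertheless a genuine gap with respect to the statement as formulated, namely on the upper part of the range $r\in[s-2,s]$. Your own divergence identity $\nabla\cdot u=-\rho^{-1}\,\nabla\rho\cdot u$ caps $\nabla\cdot u$ at $\cH^{s-1}$: it is a product with $\nabla\rho\in\cH^{s-1}$ and cannot be smoother than that, no matter how regular $u$ becomes along the bootstrap. Consequently the div--curl reconstruction can never place $u$ above $\cH^{s}$, and your iteration terminates at $u\in\cH^{\min(r+2,\,s)}$: it proves the claimed continuity $\cD^r\to\cH^{r+2}$ only when $r+2\le s$, i.e.\ at the left endpoint $r=s-2$, not on the whole interval. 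Your remark that the constraints $r\ge s-2$ and $r\le s$ ``enter through the Moser estimates'' hides this: the upper constraint $r\le s$ of the proposition is not what limits your scheme; what limits it is that the obstruction sits in the non-solenoidal component $\mathscr Q u$, which in your argument enters the reconstruction of $u$ on an equal footing with $w$, whereas the paper's proof keeps the two blocks separated in the rotated frame (the order-zero, non-gaining block only ever receives order $-3$ remainders). To cover $r>s-2$ you would have to estimate $\mathscr P u$ and $\mathscr Q u$ separately (your curl equation does give $\nabla\times u\in\cH^{r+1}$ on the whole range) and then either weaken the conclusion for $\mathscr Q u$ or reproduce the paper's finer symbolic bookkeeping; as written, the claim ``one climbs from $L^2$ up to $u\in\cH^{r+2}$'' fails for $r\in(s-2,s]$.
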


\noindent In comparison with Lemma \ref{ellde2}, the variations of the function $ \rho $ induce modifications:
\begin{itemize}
\item [{\it - d1.}] First, unlike $ (\mathscr L^\mi_2)^{-1} $, the image of $ (\mathscr  L^{\mc}_2)^{-1} $ on $ \cD^r $ is not $ \cD^{r+2} $. Indeed, since $ \rho $
is not constant, the action of $ (\mathscr  L^{\mc}_2)^{-1} $ implies a deformation out of the set of solenoidal vector fields. This is restored as indicated 
in (\ref{inverseconstitutive}) after multiplication by $ \rho $. The transition from $ \nabla \times B^* $ to $ \nabla \times B $ through (\ref{inverseconstitutive}) 
is not diagonal; it is not so simple. In particular, the identity
\begin{equation}\label{rolelll}  
\nabla \times B = \rho(x) \  (\tilde {\mathscr  L}^{\mc}_2)^{-1}  (\nabla \times B^*) \, , \qquad \tilde {\mathscr  L}^{\mc}_2 := \bigl( \rho(x) - \Delta \bigr) \ \id_{3 \times 3}  \, ,
\end{equation}
which could appear as the correct extrapolation of (\ref{lienBB*2}) is false. 
\item [{\it - d2.}] Secondly, there are restrictions on $ r $ which are absent (on $ s $) at the level of (\ref{differentialoperator}). on the one hand, the upper 
bound $ r \leq s $ comes from the limited regularity of $ \rho $. On the other hand, the lower bound $ s-2 \leq r $ is issued from the rules of composition 
in Sobolev spaces (in view of a para-differential calculus). We can further illustrate these two conditions  by looking at the elliptic equation $ \bigl( \rho(x) 
- \Delta \bigr) \, u = f $ where $ u \in \cH^{s-1} $ and $ f \in \cH^r $ with $ r \in [s-2,s] $. Then, knowing that $ s > 7/2 $, from Theorem  1.2.A in \cite{Tay91}, 
we obtain that $ u \in \cH^{r+2} $. 
\end{itemize}

\begin{proof} The rest of Subsection \ref{transformationcompressible} is devoted to the proof of Proposition \ref{ellllli}. To overcome the difficulty {\it d1}, 
we must keep track of derivative losses concerning $ \rho $. To solve {\it d2}, we follow a procedure in three steps:
\begin{itemize}
\item [-] In Paragraph \ref{paratory work}, we explain our method of unitary conjugation, and we introduce preliminary tools like the Weyl quantization.
\item [-] In Paragraph \ref{Unitaryreduction}, we show by Weyl calculus that $ \mathscr  L^{\mc}_2 $ is (almost) unitary equivalent to a block diagonal 
action. In fact, the principal symbol of $ \mathscr  L^{\mc}_2 $ has two distinct eigenvalues: $ \rho(x) $ and  $ \rho(x) + \vert \xi \vert^2 $ which are 
respectively of multiplicity one and two. The unitary reduction reveals a $ 2 \times 2 $ elliptic block of order $ 2 $, corresponding to the second eigenvalue 
and involving (after inversion) a gain of two derivatives. The difficulty is to show that this gain remains effective on $ \cD^r $, while it could be destroyed by 
the variations of $ \rho $. The presence of a non constant function $ \rho $ produces nonzero commutators and by this way non diagonal terms. In this line, 
note again that the relation (\ref{rolelll}) is not verified.
\item [-]In Paragraph \ref{parametrix}, to remedy this, we construct an approximate parametrix, and we check that its properties allow to conclude. 
\end{itemize} 
\vskip -3mm
\end{proof}


\subsubsection{Preparatory work}\label{paratory work} We denote by $OP\cH^s S^m$ the set of pseudo-differential operators of order less or equal 
to $m$ with symbols in $\cH^s$ (e.g., see \cite{Marschall,Tay91}), and simply ${\rm Op}(m)$ an element of $OP\cH^r S^m$ (for some unspecified 
$ r = s-1 $ or $ r = s $). In view of (\ref{elliordre0}), the action of $ \mathscr  L^{\mc}_2 $ is (at least) elliptic of order $ 0 $. Thus, to evaluate its precise 
order, it suffices to consider what happens for large frequencies, that is for $ \xi $ with $ \vert \xi \vert \gg 1 $. The action of $ \mathscr  L^{\mc}_2 $ is 
achieved through a matrix valued differential operator, which is non diagonal. In line with (\ref{defdePxi}) and (\ref{fourierac}), a first attempt to obtain a 
block diagonal form is to look at 
$$ \mathscr O_0^{-1} \, \mathscr  L^{\mc}_2 \, \mathscr O_0 = \mathscr D^{\mc} + \mathscr E \, , \qquad \mathscr E := \mathscr O_0^{-1} \, \bigl \lbrack 
\rho(x) \,  \id , \mathscr O_0 \bigr \rbrack \, , $$
where $ \mathscr D^{\mc} $ and $ \mathscr E = \mathscr E^* $ are given by
$$ \qquad \mathscr D^{\mc} := \left( \begin{array}{ccc}
\rho(x) & 0 & 0 \\
0 & \rho(x) - \Delta & 0 \\
0 & 0 & \rho(x) - \Delta 
\end{array} \right) \, , \qquad \mathscr E =\left( \begin{array}{ccc}
\mathscr E_{11} & \mathscr E_{12} & \mathscr E_{13} \\
\mathscr E^*_{12} & \mathscr E_{22} & \mathscr E_{23} \\
\mathscr E^*_{13} & \mathscr E^*_{23} & \mathscr E_{33}
\end{array} \right) \, . $$
It is clear that $ \mathscr D^{\mc}  \in OP\cH^sS^{2} $. Thus, in the absence of $ \mathscr E $, Proposition \ref{ellllli} would be a direct consequence 
of Theorem  1.2.A in \cite{Tay91}. There remains to explain how to absorb the above remainder $ \mathscr E $.

\smallskip

\noindent Denoting by $ \mathscr O_{0ij} = O_{0ij} (D_x) $ the elements of the matrix valued pseudo-differential 
operator $ \mathscr O_0 $ (which are all of order $ 0 $), we find that (e.g., see Corollary~4.1 in \cite{Alin})
$$ (\mathscr E)_{ij} = \mathscr O_0^{-1} \, \Bigl( \bigl \lbrack \rho(x) , \mathscr O_{0ij} \bigr \rbrack \Bigr)_{ij} = \mathscr O_0^{-1} \, \Bigl( {\rm Op} \bigl( i \, \{ 
O_{0ij} (\xi) , \rho(x) \} \bigr) \Bigr)_{ij} + \rm Op(-2) \, ,$$
where we have introduced the Poisson bracket, which is given in the phase space $(x,\xi)$ by
$$ \{ f ,g  \} :=\sum_{i=1}^3 \partial_{\xi_i} f \ \partial_{x_i}g-\sum_{i=1}^3 \partial_{x_i}f \ \partial_{\xi_i}g\, . $$ 
We see on this formula that $ \mathscr E \in OP\cH^{s-1} S^{-1} $.
The above reduction is not yet sufficient in order to conclude (due to the presence of 
non zero coefficients $ \mathscr E_{1\star} $). To (partially) further absorb $ \mathscr E $, the idea is to find a unitary operator $ \mathscr V $ such that
 \begin{equation}\label{objectif}  
\qquad \mathscr V^* \, (\mathscr D^{\mc} + \mathscr E) \, \mathscr V = \mathscr D^{\mc} + \mathscr E_r +  {\rm Op}(-4) \, , \quad \mathscr E_r := \left( \begin{array}{ccc}
\mathscr E_{11} & 0 & 0 \\
0 & \mathscr E_{22} & \mathscr E_{23} \\
0 & \mathscr E^*_{23} & \mathscr E_{33}
\end{array} \right) = \mathscr E_r^* \, .
\end{equation}
To this end, we seek $ \mathscr V $ in the form $ \mathscr V= e^{i \mathscr A} $ where $ \mathscr A $ is a self-adjoint pseudo-differential operator with real valued 
symbol $ A $. When doing this, to facilitate calculations, it is more appropriate to work with the Weyl quantization (with symbol $ A $) 
given by
$$  \mathscr A \, u (x) \equiv {\rm Op}^W (A) u(x) := \frac{1}{(2 \pi)^3} \, \int_{\RR^3} \! \int_{\RR^3} e^{i (x-y)\cdot \xi} \ A \Bigl( \frac{x+y}{2}, \xi \Bigr) \ 
u(y) \ dy \, d \xi \, , \quad u \in \cD(\RR^3 ) \, . $$
We recall that any operator $ \mathscr A $ having a real symbol $ A $ is self-adjoint, so that $ e^{i \mathscr A} $ is a unitary pseudo-differential 
operator satisfying
$$ e^{i \mathscr A} = \sum_{k=0}^{+\infty} \, \frac{i^k}{k!} \ \mathscr A^k =  \id + i \, \mathscr A + \cdots \, , \qquad (e^{i \mathscr A} )^* = e^{-i \mathscr A} \, . $$
When $ \mathscr A $ is of negative order ($ m < 0 $), the above sum implements terms $ \mathscr A^k $ which are of decreasing orders $ k \, m $. For instance, 
the above remainder (marked by $ \cdots $) is in $ {\rm Op} (2 \, m) $.


\subsubsection{Unitary reduction}\label{Unitaryreduction} Assume that $ \mathscr A = (\mathscr A_{ij})_{ij} $ is a self-adjoint operator of negative order $ -3 $. 
Then, we deal with
$$ \begin{array}{rl}
\mathscr V^* \, (\mathscr D^{\mc} + \mathscr E) \, \mathscr V \! \! \! & = \bigl( \id - i \mathscr A +  {\rm Op}(-6) \bigr) \ (\mathscr D^{\mc} + \mathscr E) \ \bigl(  \id + i 
\mathscr A +  {\rm Op}(-6) \bigr) \medskip \\
\ & = \mathscr D^{\mc} + \mathscr E + i \ \lbrack \mathscr D^{\mc} , \mathscr A \rbrack + i \ \lbrack \mathscr E , \mathscr A \rbrack + \mathscr A \, (\mathscr D^{\mc} + 
\mathscr E) \, \mathscr A + {\rm Op} (-4) \medskip\\
\ & = \mathscr D^{\mc} + \mathscr E + i \ \lbrack \mathscr D^{\mc} , \mathscr A \rbrack + {\rm Op} (-4) \, . 
\end{array}  $$
Thus, to recover (\ref{objectif}), we have to consider the homological equation 
 $$ i \ \lbrack \mathscr D^{\mc} , \mathscr A \rbrack = \left( \begin{array}{ccc}
0 & - \mathscr E_{12} & - \mathscr E_{13} \\
- \mathscr E^*_{12} & 0 & 0 \\
- \mathscr E^*_{13} & 0 & 0
\end{array} \right) + {\rm Op} (-4) \, , $$
where the $ \mathscr E_{1j} $ with $ j\in\{2,3\} $ are given and $ \mathscr A $ is the unknown. We impose $ \mathscr A_{ij} = 0 $ for $ (i,j) $ not equal to $ (1,2) $ 
or $ (1,3) $. Then, we have to solve
$$ i \ \bigl \lbrack \rho(x) , \mathscr A_{1j} \bigr \rbrack +  i \ \mathscr A_{1j} \ \Delta = - \mathscr E_{1j} + {\rm Op} (-4) \, . $$
Assuming that $ \mathscr A $ is in $ {\rm Op}(-3) $, the commutator $ \lbrack \rho , \mathscr A_{1j} \rbrack $ is in $ {\rm Op}(-4) $, and this reduces to
$$ \mathscr A_{1j} =  i \ \mathscr E_{1j} \ \Delta^{-1} + {\rm Op} (-6) \, . $$
We just take $ \mathscr A_{1j} := i \ \mathscr E_{1j} \ \Delta^{-1} $. With this choice, as required initially, the operator $ \mathscr A $ is indeed 
in $ OP \cH^{s-1} S^{-3} $. Moreover, by construction, we have access to (\ref{objectif}).

\smallskip

\noindent Retain that $ \mathscr A $ is of small order for two reasons. First, $ \mathscr E $ is obtained by commuting $ \mathscr O_0 $ 
with the diagonal matrix $ \rho(x) \, \id $ with a corresponding gain of one derivative, so that $ \mathscr E \equiv \mathscr E_{-1} $. 
Secondly, the difference $ | \xi |^2 $ between the two eigenvalues $ \rho(x) $ and $ \rho(x) + | \xi |^2 $ is of order two. After division, 
this yields a supplementary gain of two derivatives.

\smallskip

\noindent Note also that the above process does not allow to go further in the diagonalization process, in order to get rid of $ \mathscr E_{23} $. 
Indeed, the eigenvalue $ \rho(x) + | \xi |^2 $ is of multiplicity $ 2 $. As a consequence, we cannot exploit any gap between the 
(same two) eigenvalues related to the bottom $ 2 \times 2 $ block.


\subsubsection{The approximate parametrix}\label{parametrix} Consider the content of $ \mathscr D^{\mc} + \mathscr E_r $ where the orders are 
clearly separated:
\begin{itemize}
\item {\it Top $ 1 \times 1 $ block}. The scalar pseudo-differential operator $ \rho(x) + \mathscr E_{11} $ is self-adjoint. Moreover, it is elliptic of 
order $ 0 $ since its principal symbol is the function $ \rho(x) $, which satisfies (\ref{minorationrho}). Thus, it can be inverted, and its 
inverse is a self-adjoint pseudo-differential operator of order $ 0 $.
\item {\it Bottom $ 2 \times 2 $ block}. This is
$$ (\mathscr D^{\mc} + \mathscr E_r )^{Bot}_{2 \times 2} := \bigl( \rho(x)  - \Delta \bigr) \ \id_{2\times 2} + \mathscr E^{Bot}_{2 \times 2} \, , \qquad
\mathscr E^{Bot}_{2 \times 2} := \left( \begin{array}{cc}
\mathscr E_{22} & \mathscr E_{23} \\
\mathscr E^*_{23} & \mathscr E_{33}
\end{array} \right) \, , $$
where by construction $ \mathscr E^{Bot}_{2 \times 2} \in OP\cH^{s-1} S^{-1} $ acts continously on $ \cH^r $. The above operator is self-adjoint. 
Once $ \rho(x) $ satisfies (\ref{minorationrho}), it is extracted from an operator which is elliptic of order $ 0 $. As such, it is an elliptic operator of 
order $ 0 $. For large frequencies, it is (in view of its principal symbol $ \vert \xi \vert^2 $) elliptic of order $ 2 $. Consider the elliptic equation
$$ (\mathscr D^{\mc} + \mathscr E_r )^{Bot}_{2 \times 2} \, u = f \in \cH^r \, , \qquad s-2 \leq r \leq s \, , $$
or alternatively
$$ \bigl( \rho(x)  - \Delta \bigr) \, u = f - \mathscr E^{Bot}_{2 \times 2} \, u \in \cH^r \, . $$
By applying Theorem  1.2.A in \cite{Tay91}, we recover that $ u \in \cH^{r+2} $ as required. In conclusion, the operator 
$ (\mathscr D^{\mc} + \mathscr E_r )^{Bot}_{2 \times 2} $ is elliptic of order $ 2 $ on the whole phase space. It can therefore be inverted, and its inverse is 
a self-adjoint matrix valued pseudo-differential operator of  order $ -2 $. 
\end{itemize}

\noindent By construction, we have
 $$ (\mathscr  L^{\mc}_2)^{-1} = \mathscr O_0 \, \mathscr V  \, ( \mathscr D^{\mc} + \mathscr E_r - \mathscr R )^{-1} \, \mathscr V^* \, \mathscr O_0^{-1} \, , 
 \qquad \mathscr R \in OP\cH^{s-1} S^{-4} \, . $$
 On the other hand, for large frequencies, we can write
 $$ \begin{array}{rl}
 \displaystyle ( \mathscr D^{\mc} + \mathscr E_r - \mathscr R )^{-1} \! \! \! & \displaystyle = ( \mathscr D^{\mc} + \mathscr E_r )^{-1} + \sum_{k=1}^{+\infty} 
 \bigl( ( \mathscr D^{\mc} + \mathscr E_r )^{-1} \, \mathscr R \bigr)^k \, ( \mathscr D^{\mc} + \mathscr E_r )^{-1} \\
 \ & = \displaystyle ( \mathscr D^{\mc} + \mathscr E_r )^{-1} + {\rm Op} (-4) \, , 
 \end{array} $$
 and consequently
$$ (\mathscr  L^{\mc}_2)^{-1} =  \mathscr O_0 \, \mathscr V  \, \bigl( \mathscr D^{\mc} + \mathscr E_r \bigr)^{-1} \, \mathscr V^* \, \mathscr O_0^{-1} + 
{\rm Op}(-4) \, .  $$
 But on the other hand $ \mathscr V = \id + \rm Op(-3) $. Thus, the non diagonal terms induced by the actions of $ \mathscr V $ and $ \mathscr V^* $ 
 can be incorporated in a remainder. More precisely
 $$ (\mathscr  L^{\mc}_2)^{-1} = \mathscr O_0 \left( \begin{array}{cc}
\bigl( \rho(x) + \mathscr E_{11} \bigr)^{-1} & 0 \\
0 & (\mathscr D^{\mc} + \mathscr E_r )_{22}^{-1}
\end{array} \right) \mathscr O_0^{-1} + {\rm Op}(-3) \, . $$
Now, let $ v \in \cD^r $. Thus, we have $ \mathscr O_0^{-1} \, v = {}^t (0,v_2,v_3) $ with $ v_j \in \cH^r $, so that
$$ (\mathscr  L^{\mc}_2)^{-1} \, v = \mathscr O_0 \,  \left( \begin{array}{c}
0 \\
\bigl( (\mathscr D^{\mc} + \mathscr E_r )^{Bot}_{2 \times 2} \bigr)^{-1} \left( \begin{array}{c}
v_2 \\
v_3
\end{array} \right) \end{array} \right) + {\rm Op}(-3) \left( \begin{array}{c}
0 \\
v_2 \\
v_3
\end{array} \right) = {\rm Op}(-2) \left( \begin{array}{c}
0 \\
v_2 \\
v_3
\end{array} \right) , $$
which leads to the expected conclusion. 


\subsection{Transformation of the compressible  equations} \label{transbiscompressible} We start by recalling what Lemma \ref{energypreserin}
becomes in the compressible case.

\begin{lem} \label{Invariant quantities} [A decreasing enegy] Let $ U (\rho) $ be the internal energy function of the system. It must satisfy 
$ U' (\rho) = \rho^{-2} \, p (\rho) \geq 0 $, and it can be adjusted such that $ U(0)=0 $ so that $ U (\rho) \geq 0 $. In particular, for a polytropic 
equation of state, we find $ U (\rho) = c \rho^{\gamma-1} / (\gamma -1) $ where $ c $ is a positive constant and $ \gamma > 1 $ is the heat 
capacity ratio. Retain that
\begin{equation}\label{compressibledecene} 
 \cE^{\mc} (t) := \frac 1 2 \int_{\RR^3} \left (  \rho \, \vert v \vert^2 + 2 \ \rho \,  U (\rho) + B \cdot B^* \right ) (t,\cdot) \ dx \leq \cE^{\mc} (0) \, ,
\end{equation}
where 
$$ \int_{\RR^3} B \cdot B^* (t,\cdot) \, dx = \int_{\RR^3} \left ( \vert B \vert^2 + \frac{\vert \nabla \times B \vert^2}{\rho} \right ) (t,\cdot) \
dx \, .  $$ 
\end{lem}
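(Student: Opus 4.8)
\emph{Strategy.} The plan is to establish the differential identity
$$\frac{d}{dt}\,\cE^{\mc}(t)\;=\;\nu\int_{\RR^3}\rho\, v\cdot\nabla(\nabla\cdot v)\,dx\;=\;-\,\nu\int_{\RR^3}\bigl(\nabla\cdot(\rho v)\bigr)\,(\nabla\cdot v)\,dx\;\le\;0$$
and then to integrate in time. The computation runs in parallel with the one in Lemma~\ref{energypreserin}, the difference being that the variations of $\rho$ make every elementary step generate a correction; reorganizing these corrections is where all the work lies.

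\emph{Mechanical budget.} First I would take the $L^2$-scalar product of the momentum equation of (\ref{compressible}) with $\rho v$ and add $\tfrac12|v|^2$ times the continuity equation. The convective contributions then reassemble into the pure divergence $\tfrac12\nabla\cdot(\rho|v|^2 v)$ and drop after integration. The pressure term yields $-\int v\cdot\nabla p\,dx=\int p\,(\nabla\cdot v)\,dx$, and this is exactly $-\frac{d}{dt}\int_{\RR^3}\rho\,U(\rho)\,dx$: indeed $p=\rho^2 U'(\rho)$ and the continuity equation give $\frac{d}{dt}\int\rho U(\rho)=-\int(\rho U)'(\rho)\,\nabla\cdot(\rho v)\,dx=\int v\cdot\nabla p\,dx$. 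Hence $\frac{d}{dt}\bigl(\tfrac12\int\rho|v|^2+\int\rho U(\rho)\bigr)$ equals the sum of three surviving terms: the Lorentz-type term $-\int v\cdot(B^*\times(\nabla\times B))\,dx$; the \emph{compressibility integral}
$$\mathcal K:=\int_{\RR^3}\bigl(\nabla\cdot(\rho v)\bigr)\,\frac{|\nabla\times B|^2}{2\rho^2}\,dx\qquad\Bigl(=-\int_{\RR^3}\rho\, v\cdot\nabla\Bigl(\frac{|\nabla\times B|^2}{2\rho^2}\Bigr)\,dx\Bigr),$$
obtained by integrating by parts the term of (\ref{compressible}) carrying $\nabla\bigl(|\nabla\times B|^2/2\rho^2\bigr)$; and the bulk-viscous term $\mathcal V:=\nu\int\rho\, v\cdot\nabla(\nabla\cdot v)\,dx=-\nu\int(\nabla\cdot(\rho v))(\nabla\cdot v)\,dx$.

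\emph{Magnetic budget.} Using (\ref{lienBB*ini}) and the self-adjointness of $\nabla\times$, one has (as already recorded in the statement) $\int B\cdot B^*\,dx=\int\bigl(|B|^2+\rho^{-1}|\nabla\times B|^2\bigr)\,dx$. Differentiating, $\frac{d}{dt}\tfrac12\int|B|^2=\int B\cdot\partial_t B$; the piece $\int\rho^{-1}(\nabla\times B)\cdot(\nabla\times\partial_t B)\,dx$ equals $\int\nabla\times(\rho^{-1}\nabla\times B)\cdot\partial_t B\,dx$, so it combines with the previous one into $\int B^*\cdot\partial_t B\,dx$; and the last piece $\tfrac12\int\partial_t(\rho^{-1})\,|\nabla\times B|^2\,dx$ is, by the continuity equation $\partial_t\rho=-\nabla\cdot(\rho v)$, precisely $\mathcal K$. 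Thus $\frac{d}{dt}\tfrac12\int B\cdot B^*\,dx=\int B^*\cdot\partial_t B\,dx+\mathcal K$. Next I would pair the induction equation of (\ref{compressible}) with $B$ (and not with $B^*$, exactly as in Lemma~\ref{energypreserin}): after moving one curl, the two double-curl contributions become triple scalar products; the one carrying $\nabla\times v$ and the $d$-proportional part of the Hall term both vanish identically (a vector appears twice), and the remainder is the Lorentz-type triple product $\int(\nabla\times B)\cdot(B^*\times v)\,dx=-\int v\cdot(B^*\times(\nabla\times B))\,dx$. Finally, since $B^*=B+\nabla\times(\rho^{-1}\nabla\times B)$ is a \emph{time-dependent} but self-adjoint relation, one has $\int B\cdot\partial_t B^*\,dx=\int B^*\cdot\partial_t B\,dx+\int\partial_t(\rho^{-1})\,|\nabla\times B|^2\,dx=\int B^*\cdot\partial_t B\,dx+2\mathcal K$, so that substituting into the induction pairing expresses $\int B^*\cdot\partial_t B\,dx$ in terms of the Lorentz-type term and of $-2\mathcal K$.

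\emph{Assembling, and the main obstacle.} Adding the mechanical and magnetic budgets, the Lorentz-type terms cancel — this is the classical cancellation between the work of the magnetic force and the induction term, here unaffected by the $\rho$-weights — while $\mathcal K$ appears with total coefficient $1+1-2=0$: once from the momentum equation, once from $\partial_t(\rho^{-1})$ in the magnetic energy density, and with weight $-2$ from the time-dependence of the constitutive relation (\ref{lienBB*ini}) in the induction pairing. Only $\mathcal V$ remains, which gives the identity stated at the outset; integrating from $0$ to $t$ yields $\cE^{\mc}(t)\le\cE^{\mc}(0)$, and when $\nu=0$, or in the incompressible reduction $\rho\equiv1$, $\nabla\cdot v\equiv0$, one recovers the exact conservation of Lemma~\ref{energypreserin}. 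The main obstacle is precisely this triple bookkeeping: the single integral $\mathcal K$ surfaces in three different disguises, each of which must first be rewritten through the continuity equation before the cancellation is visible; in particular the contribution of $\partial_t(\rho^{-1})$ coming from the time-dependence of the operator $\mathscr L^{\mc}_2$ of Subsection~\ref{transformationcompressible} — which has no counterpart in the constant-density computation of Lemma~\ref{energypreserin} — must not be overlooked when differentiating $\int B\cdot B^*$.
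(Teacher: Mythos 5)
Your route is genuinely different from the paper's: the paper disposes of this lemma by citing the Hamiltonian formulation of XMHD (Kimura--Morrison and related references) for the conservative case $ \nu = 0 $ and simply attributing the inequality to the bulk viscosity, whereas you derive the energy balance directly from (\ref{compressible}) and (\ref{lienBB*ini}). Up to the last step your computation is correct, and the delicate bookkeeping is exactly right: the compressibility integral $ \mathcal K $ enters three times (from the term $ \nabla \bigl( \vert \nabla \times B \vert^2 / 2 \rho^2 \bigr) $ tested against $ \rho \, v $, from $ \partial_t (\rho^{-1}) $ in $ \tfrac12 \int \rho^{-1} \vert \nabla \times B \vert^2 $, and with weight $ -2 $ when exchanging $ \int B \cdot \partial_t B^* $ for $ \int B^* \cdot \partial_t B $ through the time-dependent constitutive relation), the Hall and $ \nabla \times v $ triple products vanish, and the Lorentz terms cancel, yielding the identity $ \frac{d}{dt} \cE^{\mc} = \nu \int \rho \, v \cdot \nabla (\nabla \cdot v) \, dx = - \nu \int \nabla \cdot (\rho v) \, (\nabla \cdot v) \, dx $. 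This is more explicit than anything in the paper's own proof, which contains no computation at all.

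The gap is the final sign claim. You assert $ - \nu \int \nabla \cdot (\rho v) \, (\nabla \cdot v) \, dx \leq 0 $, but $ \nabla \cdot (\rho v) \, (\nabla \cdot v) = \rho \, (\nabla \cdot v)^2 + (v \cdot \nabla \rho) \, (\nabla \cdot v) $, and the cross term has no sign: only the first piece is dissipative. Because the viscous term in (\ref{compressible}) is written per unit mass, the associated force density $ \nu \, \rho \, \nabla (\nabla \cdot v) $ is not in divergence form $ \nabla ( \lambda \, \nabla \cdot v ) $, so testing with $ \rho \, v $ does not produce a nonnegative quadratic form (it would if $ \rho $ were constant). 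Indeed, at $ t = 0 $ one may choose admissible data with small $ \nabla \cdot v_0 $ and $ v_0 \cdot \nabla \rho_0 $ negatively correlated with $ \nabla \cdot v_0 $ so that your right-hand side is positive; hence monotone decrease cannot be extracted from the identity alone, which only gives $ \frac{d}{dt} \cE^{\mc} \leq - \nu \int \rho \, (\nabla \cdot v)^2 \, dx + \nu \int \vert v \cdot \nabla \rho \vert \, \vert \nabla \cdot v \vert \, dx $, i.e. a Gr\"onwall-type control rather than $ \cE^{\mc} (t) \leq \cE^{\mc} (0) $. To close the lemma as stated you would need an additional argument absorbing the cross term (or the paper's cited, non-computed reasoning); in fact your derivation pinpoints exactly where the paper's one-line attribution of the inequality to the bulk viscosity requires care.
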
 

\begin{proof} It is well known, see especially \cite{KimuM} but also \cite{AY16,LMM16}, that XMHD (with $ \nu = 0 $) has a Hamiltonian structure 
conserving the energy $ \cE^{\mc} $. The inequality inside (\ref{compressibledecene}) comes from the dissipative effects which are  induced by the 
(fluid) bulk viscosity. Note that there are also (when $ \nu = 0 $) three independent Casimirs, see (52), (53) and (54) in \cite{AKY15}.
\end{proof} 

\noindent Since $ p' > 0 $, instead of working with $ \rho $, we can alternatively deal with  
$$ q := g(\rho) := \int_{\bar \rho}^\rho \frac{\sqrt{p'(s)}}{s} \ ds \, , \qquad g' (\rho) = \frac{\sqrt{p'(\rho)}}{\rho} > 0 \, , \qquad a(q) := 
g^{-1} (q) \ g' \circ g^{-1} (q) \, . $$
Expressed in terms of $ {}^t (q,v,B^*) $, the system (\ref{compressible}) becomes symmetric with respect to the two first lines below, 
that is especially with respect to $ (q,v) $.  We consider
\begin{equation}\label{compressibleb}   
\left \lbrace \begin{array}{l}
\part_t q + (v \cdot \nabla) q + a(q) \ \nabla \cdot v = 0 \, , \vspace{5pt}\\
\displaystyle \part_t v + (v \cdot \nabla ) v + a(q) \ \nabla q + B^* \times \mathscr K^{\mc}_{-1} \, B^* + \frac{1}{2} \ \nabla \vert 
\mathscr K^{\mc}_{-1} \, B^* \vert^2 = \nu \ \nabla (\nabla \cdot v) \, , \vspace{5pt}\\
\displaystyle \part_t B^* + \nabla \times \bigl( B^* \times (v - d \ \mathscr K^{\mc}_{-1} \, B^* ) \bigr) + \nabla \times \bigl( 
(\nabla \times v) \times \mathscr K^{\mc}_{-1} \, B^* \bigr) = 0 \, .
\end{array} \right. 
\end{equation}
In (\ref{compressibleb}), in comparison with (\ref{compressible}), care has been taken to replace everywhere $ \nabla \times B $ as prescribed 
by (\ref{inverseconstitutive}). In (\ref{compressibleb}), the expression $ \mathscr K^{\mc}_{-1} \, B^* $ undergoes no more than one derivative. 
In view of Proposition \ref{ellllli}, this means that the corresponding contributions can be seen as acting on $ B^* $ like zero order operators 
(and even of order $-1$). 

\smallskip

\noindent The equation on $ B^* $ contains an inertial contribution at the end of the last line of (\ref{compressibleb}), which is of order $ 2 $ with 
respect to $ v $. The idea of Section \ref{vorcasincompressible} is to reduce this order to $ 1 $ by introducing certain derivatives of $ v $, namely 
those contained in the vorticity $ w $. Remarkably, the extra derivatives of $ B^* $ thus generated (when looking at the equation on $ w $) are 
exactly balanced inside symmetric structures. In the compressible case, derivatives of $ q $ (or $ \rho $) appear during this procedure. Moreover, 
all derivatives of $ v $ are required, including the divergence part $ \nabla \cdot v $. Accordingly, we have to introduce the new unknown
$$ U^{\mc}_{\mv}  := ( q, \nabla q, \nabla \cdot v, w , B^*) \in \RR \times \RR^3 \times \RR \times \RR^3 \times \RR^3 \, , \qquad w := \nabla 
\times v \, . $$
From (\ref{compressibleb}), we can deduce that
\begin{equation}\label{compressiblebcomplet}   
\left \lbrace \begin{array}{l}
\part_t q + v\cdot \nabla q + a(q) \ \nabla \cdot v = 0 \, , \vspace{5pt}\\
\part_t (\nabla q) + (v \cdot \nabla) \nabla q + a(q) \ \nabla (\nabla \cdot v) = \mathscr S^{\mc  \dot q}_{\mv 0} \, U^{\mc}_{\mv}  \, , \vspace{5pt}\\
\displaystyle \part_t (\nabla \cdot v) + (v \cdot \nabla) (\nabla \cdot v) + a(q) \ \Delta q - \nu \ \Delta (\nabla \cdot v) = \mathscr S^{\mc  \dot v}_{\mv 1} \, 
U^{\mc}_{\mv}  \, , \vspace{5pt}\\
\part_t w + v \cdot \nabla w + \bigl( \mathscr K^{\mc}_{-1} \, B^* \cdot \nabla \bigr) B^* = \mathscr S^{\mc  w}_{\mv 0} \, U^{\mc}_{\mv}  \, , \vspace{5pt}\\
\displaystyle \part_t B^* + \bigl( \bigl( v - d \, \mathscr K^{\mc}_{-1} \, B^* \bigr) \cdot \nabla \bigr) B^* + ( \mathscr K^{\mc}_{-1} \, B^* \cdot \nabla) 
w = \mathscr S^{\mc  B^*}_{\mv 0} \, U^{\mc}_{\mv}  \, .
\end{array} \right. 
\end{equation}
 In (\ref{compressiblebcomplet}), the velocity $ v $ must be deduced from $ (w , \nabla \cdot v) $ through the div-curl system (\ref{elldivcurl}),
 see Subsection \ref{div-curlsystem}. On the other hand, the operator $ \mathscr S^{\mc}_{\mv}  = {}^t (0, \mathscr S^{\mc  \dot q}_{\mv 0}, \mathscr S^{\mc  \dot v}_{\mv 0}, 
 \mathscr S^{\mc  w}_{\mv 0} , \mathscr S^{\mc  B^*}_{\mv 0} ) $ put in source term is outlined below
\begin{equation}\label{contentsource}   
\begin{array}{rl}
\displaystyle \mathscr S^{\mc  \dot q}_{\mv 0} \, U^{\mc}_{\mv}  \! \! \! & \displaystyle := \, - \nabla q \ D v - a'(q) \ (\nabla \cdot v) \ \nabla q \, , \vspace{4pt} \\
\displaystyle \mathscr S^{\mc  \dot v}_{\mv 1} \, U^{\mc}_{\mv}  \! \! \! & \displaystyle := - \sum_{i=1}^3 (\part_i v \cdot \nabla) v_i - a'(q) \ \vert \nabla q \vert^2 
- \nabla \cdot (\mathscr S^{\mc  v}_{\mv 0} \, U^{\mc}_{\mv} ) \, ,  \vspace{4pt} \\
\displaystyle \mathscr S^{\mc  w}_{\mv 0} \, U^{\mc}_{\mv}   \! \! \! & \displaystyle :=  (B^* \cdot \nabla) (\mathscr K^{\mc}_{-1} \, B^*) + (\nabla \cdot B^*_0) \ 
\mathscr K^{\mc}_{-1} \, B^* \vspace{2pt} \\
\ & \displaystyle \quad \ - \, \nabla \cdot (\mathscr K^{\mc}_{-1} \, B^*) \ B^* + (w\cdot \nabla) v - w \ (\nabla \cdot v) \, , \vspace{4pt}\\
\displaystyle \mathscr S^{\mc  B^*}_{\mv 0} U^{\mc}_{\mv}  \! \! \! & \displaystyle  := - \, \bigl( \nabla \cdot v - d \ (\nabla \cdot \mathscr K^{\mc}_{-1} \, B^*) \bigr) \ 
B^* + (\nabla \cdot B^*_0) \ \bigl( v - d \ \mathscr K^{\mc}_{-1} \, B^* \bigr)  \\
\ & \displaystyle \quad \ + \, (B^* \cdot \nabla) \bigl( v - d \ \mathscr K^{\mc}_{-1} \, B^* \bigr) - \nabla \cdot (\mathscr K^{\mc}_{-1} \, B^*) \ w + \, 
(w \cdot \nabla) (\mathscr K^{\mc}_{-1} \, B^*) \, , 
\end{array} 
\end{equation}
where $ \mathscr S^{\mc  \dot v}_{\mv 1} $ is indeed of order $ 1 $ since it is defined through $ \mathscr S^{\mc  v}_{\mv 0} $ which is given by
$$ \mathscr S^{\mc  v}_{\mv 0} \, U^{\mc}_{\mv}  := B^* \times \mathscr K^{\mc}_{-1} \, B^* + \frac{1}{2} \ \nabla \vert \mathscr K^{\mc}_{-1} \, B^* \vert^2 \, . $$
In comparison with $ \mathscr S^\mi_{\mv 0} \, U^\mi_{\mv} $, we have many added new contributions. Some of them come from the fact that $ \mathscr K^{\mc}_{-1} \, B^* $
and $ B^* $ are no more solenoidal vector fields. Observe that we have exploited Lemma \ref{divfreepreser} to replace $ \nabla \cdot B^* $ everywhere by 
$ \nabla \cdot B^*_0 $ (this is essential to avoid artificial losses of derivatives coming from $ \nabla \cdot B^* $). On the other hand, we have incorporated 
the vortex stretching induced by the term $ \nabla \cdot v $ (which is no more zero) as well as other contributions related to $ \nabla \cdot v $. 
  

\subsection{Proof of Theorem \ref{theoprin}} \label{lwellposedcomp} We start by showing Theorem \ref{theoprin} under the more restrictive 
regularity assumption $ s > 7/2 $. We refer to Section \ref{potentialformulations} for the optimal result. From (\ref{reginidata}), we know that
$ \rho_0 \in \cH^s $, and by construction we have $ q_0 = g (\bar \rho + \rho_0) $ with $ g(\bar \rho) = 0 $. Then, by the rule of composition in 
$ \cH^s $, we recover that $ q_0 \in \cH^s $. In particular, at time $ t = 0 $, with $ \tilde s := s-1 > 5/2 $, we can assert that
\begin{equation}\label{inidatasimplbis}  
U^{\mc}_{\mv}  (0,\cdot) = U^{\mc}_{\mv  0} = (q_0, \nabla q_0,\nabla \cdot v_0, w_0 , B^*_0) \in \cH^{\tilde s} \, , \qquad w_0 := \nabla \times v_0  \, .
\end{equation}
Any smooth solution to (\ref{compressible})-(\ref{lienBB*ini})-(\ref{inidata}) leads to a solution to (\ref{compressiblebcomplet})-(\ref{inidatasimplbis}), 
and conversely. We study below the time evolution of the $ L^2 $-norm of $ U^{\mc}_{\mv}  $ (assuming for the moment that $ U^{\mc}_{\mv}  $ is bounded in the large 
$ \cH^{\tilde s} $-norm).

\begin{lem} \label{L2inestimatebis} [$L^2 $-energy estimate for the vorticity formulation in the compressible case] Let $ T > 0 $.  Assume 
that the function $ U^{\mc}_{\mv}  \in C([0,T];\cH^{\tilde s}) $ is a solution to (\ref{compressiblebcomplet}) with initial data as in (\ref{inidatasimplbis}). 
Then, we can find a constant $ C $ depending only on the $ C([0,T];\cH^{\tilde s}) $-norm of $ U^{\mc}_{\mv}  $ such that
\begin{equation}\label{0-energy estimatebis}  
\parallel U^{\mc}_{\mv}  (t,\cdot) \parallel_{L^2} \leq \parallel U^{\mc}_{\mv  0} \parallel_{L^2} \, e^{C \, t + C \, t / \nu} \, , \quad \forall \, t \in [0,T]  \, .
\end{equation}
\end{lem}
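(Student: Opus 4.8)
The plan is to follow the scheme of the proof of Lemma~\ref{L2inestimate}: take the $ L^2 $-scalar product of the five lines of (\ref{compressiblebcomplet}) respectively with $ q $, $ \nabla q $, $ \nabla \cdot v $, $ w $ and $ B^* $, and sum. The left-hand sides produce $ \tfrac{1}{2} \tfrac{d}{dt} \parallel U^{\mc}_{\mv}  (t,\cdot) \parallel_{L^2}^2 $ together with the dissipative contribution $ \nu \parallel \nabla (\nabla \cdot v) \parallel_{L^2}^2 \geq 0 $ coming from the term $ - \nu \, \Delta (\nabla \cdot v) $ of the third line. The transport parts built on $ v \cdot \nabla $ are skew-adjoint up to a factor $ \nabla \cdot v $; reconstructing $ v $ from $ (w, \nabla \cdot v) $ through the div-curl system of Subsection~\ref{div-curlsystem} gives $ \parallel \nabla v \parallel_{L^\infty} \lesssim \parallel U^{\mc}_{\mv}  \parallel_{\cH^{\tilde s}} $ and $ \parallel \nabla v \parallel_{L^2} \lesssim \parallel U^{\mc}_{\mv}  \parallel_{L^2} $, so these contributions — as well as the term $ \int a(q) \, (\nabla \cdot v) \, q \, dx $ from the first line — are $ \grandO{\parallel U^{\mc}_{\mv}  \parallel_{\cH^{\tilde s}} \, \parallel U^{\mc}_{\mv}  \parallel_{L^2}^2} $; here and below the implied constants depend only on $ \parallel U^{\mc}_{\mv}  \parallel_{C([0,T];\cH^{\tilde s})} $ and, through $ a(q) $, $ a'(q) $ and $ \rho^{-1} \nabla \rho $, on the lower bound for $ \rho $.

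First I would dispose of the first-order couplings carrying $ a(q) $. The contribution $ \int a(q) \, \nabla(\nabla \cdot v) \cdot \nabla q \, dx $ issued from the second line and the contribution $ \int a(q) \, \Delta q \, (\nabla \cdot v) \, dx $ issued from the third line cancel after one integration by parts, leaving only the order-zero remainder $ - \int a'(q) \, |\nabla q|^2 \, (\nabla \cdot v) \, dx $. Likewise, in the two last lines the triple products built on the vector field $ \mathscr K^{\mc}_{-1} \, B^* $ combine through $ \int \bigl( \mathscr K^{\mc}_{-1} \, B^* \cdot \nabla \bigr)(w \cdot B^*) \, dx = - \int \nabla \cdot \bigl( \mathscr K^{\mc}_{-1} \, B^* \bigr) \, (w \cdot B^*) \, dx $, and the $ d $-term of the fifth line is rewritten as $ \tfrac{d}{2} \int \nabla \cdot \bigl( \mathscr K^{\mc}_{-1} \, B^* \bigr) \, |B^*|^2 \, dx $. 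The difference with the incompressible case is that $ \mathscr K^{\mc}_{-1} \, B^* $ is no longer solenoidal; but testing the constitutive relation (\ref{lienBB*inire}) against $ \mathscr K^{\mc}_{-1} \, B^* $ and using (\ref{elliordre0}), exactly as in the proof of Lemma~\ref{zeroorder}, gives $ \parallel \mathscr K^{\mc}_{-1} \, B^* \parallel_{L^2} + \parallel \nabla \times \mathscr K^{\mc}_{-1} \, B^* \parallel_{L^2} \lesssim \parallel B^* \parallel_{L^2} $, while $ \nabla \cdot \bigl( \mathscr K^{\mc}_{-1} \, B^* \bigr) = - \rho^{-1} \, \nabla \rho \cdot \mathscr K^{\mc}_{-1} \, B^* $ is controlled in $ L^\infty $ by $ \parallel U^{\mc}_{\mv}  \parallel_{\cH^{\tilde s}} \parallel U^{\mc}_{\mv}  \parallel_{L^2} $; by the div-curl estimates of the Appendix these bound $ \parallel \mathscr K^{\mc}_{-1} \, B^* \parallel_{\cH^1} \lesssim (1 + \parallel U^{\mc}_{\mv}  \parallel_{\cH^{\tilde s}}) \parallel U^{\mc}_{\mv}  \parallel_{L^2} $, so all these terms stay of order zero. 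By Proposition~\ref{ellllli} and the composition rules in Sobolev spaces, the source operators $ \mathscr S^{\mc \dot q}_{\mv 0} $, $ \mathscr S^{\mc w}_{\mv 0} $ and $ \mathscr S^{\mc B^*}_{\mv 0} $ of (\ref{contentsource}) are of order zero, so their pairings are again $ \grandO{\parallel U^{\mc}_{\mv}  \parallel_{\cH^{\tilde s}} \, \parallel U^{\mc}_{\mv}  \parallel_{L^2}^2} $ — the $ \nabla \cdot B^*_0 $ terms being harmless because $ \nabla \cdot B^*_0 \in \cH^s $, which is precisely why Lemma~\ref{preservationB*} was invoked to replace $ \nabla \cdot B^* $ by $ \nabla \cdot B^*_0 $.

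The one genuinely first-order contribution — and the single place where $ \nu $ enters — is the pairing of the third line with $ \nabla \cdot v $, through the order-one source $ \mathscr S^{\mc \dot v}_{\mv 1} \, U^{\mc}_{\mv}  = - \sum_i (\part_i v \cdot \nabla) v_i - a'(q) \, |\nabla q|^2 - \nabla \cdot ( \mathscr S^{\mc v}_{\mv 0} \, U^{\mc}_{\mv} ) $. Its first two pieces lose no derivative since $ \parallel \sum_i (\part_i v \cdot \nabla) v_i \parallel_{L^2} \lesssim \parallel \nabla v \parallel_{L^\infty} \parallel \nabla v \parallel_{L^2} $ and $ \parallel a'(q) \, |\nabla q|^2 \parallel_{L^2} \lesssim \parallel \nabla q \parallel_{L^\infty} \parallel \nabla q \parallel_{L^2} $. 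For the last piece I would integrate by parts, $ - \int (\nabla \cdot v) \, \nabla \cdot ( \mathscr S^{\mc v}_{\mv 0} \, U^{\mc}_{\mv} ) \, dx = \int \nabla (\nabla \cdot v) \cdot \mathscr S^{\mc v}_{\mv 0} \, U^{\mc}_{\mv}  \, dx $; since $ \mathscr S^{\mc v}_{\mv 0} \, U^{\mc}_{\mv}  = B^* \times \mathscr K^{\mc}_{-1} \, B^* + \tfrac{1}{2} \nabla | \mathscr K^{\mc}_{-1} \, B^* |^2 $ is of order zero, with $ \parallel \mathscr S^{\mc v}_{\mv 0} \, U^{\mc}_{\mv}  \parallel_{L^2} \lesssim \parallel U^{\mc}_{\mv}  \parallel_{\cH^{\tilde s}} \parallel U^{\mc}_{\mv}  \parallel_{L^2} $ by the above estimates on $ \mathscr K^{\mc}_{-1} \, B^* $, Young's inequality gives $ \bigl\vert \int \nabla (\nabla \cdot v) \cdot \mathscr S^{\mc v}_{\mv 0} \, U^{\mc}_{\mv}  \, dx \bigr\vert \leq \tfrac{\nu}{2} \parallel \nabla (\nabla \cdot v) \parallel_{L^2}^2 + \tfrac{C}{\nu} \parallel U^{\mc}_{\mv}  \parallel_{\cH^{\tilde s}}^2 \parallel U^{\mc}_{\mv}  \parallel_{L^2}^2 $; the first term is absorbed by the dissipation, the second produces the factor $ 1/\nu $. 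Collecting all contributions yields $ \tfrac{1}{2} \tfrac{d}{dt} \parallel U^{\mc}_{\mv}  \parallel_{L^2}^2 + \tfrac{\nu}{2} \parallel \nabla (\nabla \cdot v) \parallel_{L^2}^2 \leq ( C + C/\nu ) \, \parallel U^{\mc}_{\mv}  \parallel_{L^2}^2 $ with $ C $ depending only on $ \parallel U^{\mc}_{\mv}  \parallel_{C([0,T];\cH^{\tilde s})} $; dropping the nonnegative dissipative term and applying Gr\"onwall's inequality gives (\ref{0-energy estimatebis}). The main obstacle is to recognize that $ \nabla \cdot ( \mathscr S^{\mc v}_{\mv 0} \, U^{\mc}_{\mv} ) $ is the \emph{only} term that truly loses a derivative at the $ L^2 $ level — so that the sole viscous term $ - \nu \, \Delta (\nabla \cdot v) $ is enough to close the estimate — which in turn forces one to verify that the two $ a(q) $-cancellations and the two $ \mathscr K^{\mc}_{-1} \, B^* $-cancellations are exact and that, thanks to Proposition~\ref{ellllli} and Lemma~\ref{preservationB*}, every remaining term is of order zero.
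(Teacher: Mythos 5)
Your proposal is correct and follows essentially the same route as the paper's proof: testing (\ref{compressiblebcomplet}) with $ U^{\mc}_{\mv} $, exploiting the exact cancellations of the $ a(q) $- and $ \mathscr K^{\mc}_{-1} B^* $-couplings, treating the remaining sources as order zero thanks to Proposition \ref{ellllli} and the replacement $ \nabla \cdot B^* \mapsto \nabla \cdot B^*_0 $, and absorbing the single first-order source $ \nabla \cdot ( \mathscr S^{\mc  v}_{\mv 0} U^{\mc}_{\mv} ) $ into the bulk-viscosity dissipation via Young's inequality before applying Gr\"onwall. The only (harmless) variation is that you obtain the $ L^2 $-level bounds on $ \mathscr K^{\mc}_{-1} B^* $ directly from the coercivity (\ref{elliordre0}) combined with the div-curl estimates of the Appendix, whereas the paper invokes (\ref{condrerr}) together with an $ L^\infty $--$ L^1 $ splitting of $ \int \vert \mathscr S^{\mc  v}_{\mv 0} U^{\mc}_{\mv} \vert^2 \, dx $.
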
 

\begin{proof} Multiply (\ref{compressiblebcomplet}) by $ {}^t U^{\mc}_{\mv}  $, and then integrate with respect to $ x $. After integrations by parts, we find
\begin{equation}\label{bilanener}   
\begin{array}{l} 
\displaystyle \frac{1}{2} \ \frac{d}{dt} \bigg( \int_{\RR^3} \vert U^{\mc}_{\mv}  (t,\cdot)  \vert^2 \ dx \bigg) + \, \nu \int_{\RR^3} \vert \nabla (\nabla \cdot v) \vert^2 \ dx \\
\displaystyle \qquad \quad = \int_{\RR^3} \bigl( \nabla q \cdot \mathscr S^{\mc  \dot q}_{\mv 0} \, U^{\mc}_{\mv} + (\nabla \cdot v) \ \mathscr S^{\mc  \dot v}_{\mv 1} \, 
U^{\mc}_{\mv} + w \cdot \mathscr S^{\mc  w}_{\mv 0} \, U^{\mc}_{\mv} + B^* \cdot \mathscr S^{\mc  B^*}_{\mv 0} \, U^{\mc}_{\mv} \bigr) \ dx \\
\displaystyle \qquad \qquad + \int_{\RR^3} ( h_0 + h_1 + h_2 + h_3 + h_4 ) \ dx \, , 
\end{array} 
\end{equation}
where the $ \mathscr S^{\mc  \star}_{\mv  \diamond} $ are the source terms of (\ref{contentsource}), whereas the $ h_\star $ come from the quasilinear 
parts. We find that
$$ \begin{array}{l} 
\displaystyle \int_{\RR^3} h_0 \ dx := - \int q \ a(q) \ (\nabla \cdot v) \ dx \, , \medskip \\
\displaystyle \int_{\RR^3} h_1 \ dx := - \int U^{\mc}_{\mv}  \cdot \bigl( (v\cdot \nabla) U^{\mc}_{\mv}  \bigr) \ dx = \frac{1}{2} \int ( \nabla \cdot v) \ \vert 
U^{\mc}_{\mv}  \vert^2 \ dx \, , \medskip \\
\displaystyle \int_{\RR^3} h_2 \ dx :=  - \int_{\RR^3} a(q) \ \bigl( \nabla q \cdot \nabla (\nabla \cdot v) + (\nabla \cdot v) \ \Delta q 
\bigr) \ dx = \int_{\RR^3} a'(q) \ (\nabla \cdot v) \ \vert \nabla q \vert^2 \ dx \, ,  \medskip \\
\displaystyle \int_{\RR^3} h_3 \ dx := \int_{\RR^3}  \bigl( \nabla \cdot (\mathscr K^{\mc}_{-1} \, B^*) \bigr) \ (w \cdot B^*) 
\ dx \, , \medskip \\
\displaystyle \int_{\RR^3} h_4 \ dx := - \frac{d}{2} \int_{\RR^3}  \bigl( \nabla \cdot (\mathscr K^{\mc}_{-1} \, B^*) \bigr) \ 
\vert B^* \vert^2 \ dx \, .
\end{array} $$
We consider each term separately. Knowing that $ \tilde s > 5/2 $, we use repeatedly the Sobolev embedding theorem $ \cH^{\tilde s} 
\hookrightarrow L^\infty $. We also exploit Proposition \ref{ellllli} with $ r = s-2 $ to deduce from $ \nabla \times B^* \in \cH^{\tilde s -1} 
\equiv \cH^{s-2} $ that $ \mathscr K^{\mc}_{-1} \, B^* = (\mathscr  L^{\mc}_2)^{-1} \, \nabla \times B^* \in \cH^s $. In other words
\begin{equation}\label{condrerr}    
\qquad \parallel \mathscr K^{\mc}_{-1} \, B^* \parallel_{\cH^{\tilde s}} \lesssim \parallel B^* \parallel_{\cH^{\tilde s-1}} \, , \qquad 
\parallel \nabla \mathscr K^{\mc}_{-1} \, B^* \parallel_{\cH^{\tilde s}} \lesssim \parallel B^* \parallel_{\cH^{\tilde s}} . 
\end{equation}
On the other hand, by assumption, we know that 
$$ \parallel U^{\mc}_{\mv}  \parallel_{L^\infty ([0,T] \times \RR^3)} + \parallel \nabla U^{\mc}_{\mv}  \parallel_{L^\infty ([0,T] \times \RR^3)} 
\lesssim M:= \parallel U^{\mc}_{\mv}  \parallel_{C([0,T];\cH^{\tilde s})} < + \infty \, . $$
First, from Lemma \ref{elldpourdivw}, we have 
$$ \bigg\vert \int_{\RR^3} \nabla q \cdot \mathscr S^{\mc  \dot q}_{\mv 0} \, U^{\mc}_{\mv} \ dx \bigg\vert \leq \, \bigl( \parallel \nabla q \parallel_{L^\infty} + 
\parallel \nabla a(q) \parallel_{L^\infty} \bigr) \, \parallel U^{\mc}_{\mv}  \parallel^2_{L^2} \, \lesssim \,  M \, \parallel U^{\mc}_{\mv}  \parallel^2_{L^2} . $$
We now turn to the equation on $ \nabla \cdot v $, where the source term $ \mathscr S^{\mc  \dot v}_{\mv 1} U^{\mc}_{\mv}  $ does include a loss
of one derivative. The idea is to compensate this after integration by parts by the bulk (fluid) viscosity. 
With the help of Lemma \ref{elldpourdivw}, this gives rise to 
$$ \begin{array}{rl}
\displaystyle \bigg\vert \int_{\RR^3} (\nabla \cdot v) \ \mathscr S^{\mc  \dot v}_{\mv 1} U^{\mc}_{\mv} \ dx \bigg \vert \leq \! \! \! & \displaystyle \sum_{i=1}^3 
\int_{\RR^3} \vert \nabla \cdot v \vert \ \vert \part_i v \vert \ \vert \nabla v_i \vert \ dx + \int_{\RR^3} \vert \nabla \cdot v \vert \ \vert a'(q) 
\vert \ \vert \nabla q \vert^2 \ dx \\
\ & \displaystyle + \int_{\RR^3} \vert \nabla (\nabla \cdot v) \vert \ \vert \mathscr S^{\mc  v}_{\mv 0} U^{\mc}_{\mv}  \vert \ dx \smallskip \\
\ \leq \! \! \!  & \displaystyle C \ \bigl( \parallel U^{\mc}_{\mv}  \parallel_{L^\infty} \bigr) \ \parallel U^{\mc}_{\mv}  \parallel^2_{L^2} \\
\ & \displaystyle + \, c \ \nu  \int_{\RR^3} \vert \nabla (\nabla \cdot v) \vert^2 \ dx + \frac{C}{\nu} \int_{\RR^3} \vert \mathscr S^{\mc  v}_{\mv 0} 
U^{\mc}_{\mv}  \vert^2 \ dx \, ,
\end{array} $$
where the constant $ c \in \RR_+^* $ can be chosen as small as wished.  Observe that
$$ \int_{\RR^3} \vert \mathscr S^{\mc  v}_{\mv 0} U^{\mc}_{\mv}  \vert^2 \ dx \leq \parallel \mathscr S^{\mc  v}_{\mv 0} U^{\mc}_{\mv}  \parallel_{L^\infty} \int_{\RR^3}
 \vert \mathscr S^{\mc  v}_{\mv 0} U^{\mc}_{\mv}  \vert \ dx \, , $$
From (\ref{condrerr}), we have
$$ \begin{array}{rl} 
\displaystyle \parallel \mathscr S^{\mc  v}_{\mv 0} U^{\mc}_{\mv}  \parallel_{L^\infty} \! \! \! & \displaystyle \lesssim \parallel \mathscr K^{\mc}_{-1} \, B^* 
\parallel_{L^\infty} \, \bigl( \parallel B^* \parallel_{L^\infty} + \parallel \nabla \mathscr K^{\mc}_{-1} \, B^* \parallel_{L^\infty}  \bigr) \\
\ & \displaystyle \lesssim \parallel \mathscr K^{\mc}_{-1} \, B^* \parallel_{\cH^{\tilde s}} \, \bigl( \parallel B^* \parallel_{\cH^{\tilde s}} + \parallel 
\nabla \mathscr K^{\mc}_{-1} \, B^* \parallel_{\cH^{\tilde s}}  \bigr) \lesssim \parallel B^* \parallel^2_{\cH^{\tilde s}} \lesssim M^2 \, .
\end{array} $$
On the other hand
$$ \int_{\RR^3} \vert \mathscr S^{\mc  v}_{\mv 0} U^{\mc}_{\mv}  \vert \ dx \leq \parallel \mathscr K^{\mc}_{-1} \, B^* \parallel_{L^2} \, \bigl( \parallel B^* 
\parallel_{L^2} + \parallel \nabla \mathscr K^{\mc}_{-1} \, B^* \parallel_{L^2}  \bigr) \lesssim \parallel U^{\mc}_{\mv}  \parallel^2_{L^2} . $$
Thus, we can retain that
$$  \bigg\vert \int_{\RR^3} (\nabla \cdot v) \ \mathscr S^{\mc  \dot v}_{\mv 1} \, U^{\mc}_{\mv} \ dx \bigg \vert \leq C \ M \ \Bigl( 1 + \frac{M}{\nu} \Bigr) \parallel U^{\mc}_{\mv}  
\parallel^2_{L^2} + \, c \ \nu  \int_{\RR^3} \vert \nabla (\nabla \cdot v) \vert^2 \ dx \, . $$
Thanks to Lemma \ref{preservationB*} and (\ref{condrerr}), the two contributions $ \mathscr S^{\mc  w}_{\mv 0} \, U^{\mc}_{\mv} $ and 
$ \mathscr S^{\mc  B^*}_{\mv 0} \, U^{\mc}_{\mv} $ are both of order $ 0 $ in terms of $ U^{\mc}_{\mv} $. Be careful, in the two definitions of 
$ \mathscr S^{\mc  w}_{\mv 0} \, U^{\mc}_{\mv} $ and $ \mathscr S^{\mc  B^*}_{\mv 0} \, U^{\mc}_{\mv} $ some derivatives act on 
$ \mathscr K^{\mc}_{-1} $ and therefore on $ \rho $ (since the operator $ \mathscr K^{\mc}_{-1} $ involves coefficients depending on $ \rho $). But these one 
order derivatives of $ \rho $ are included in $ U^{\mc}_{\mv}  $ (through the derivatives of $ q $), and therefore this corresponds indeed
to zero order contributions.

\smallskip

\noindent As described above, we can obtain
$$ \bigg\vert \int_{\RR^3} w \cdot  \mathscr S^{\mc  w}_{\mv 0} \, U^{\mc}_{\mv} \ dx \bigg\vert \lesssim \,  M \, \parallel U^{\mc}_{\mv}  \parallel^2_{L^2} \, , \qquad 
\bigg\vert \int_{\RR^3} B^* \cdot \mathscr S^{\mc  B^*}_{\mv 0} \, U^{\mc}_{\mv} \ dx \bigg\vert \lesssim \,  M \, \parallel U^{\mc}_{\mv}  \parallel^2_{L^2} . $$
The same sort of arguments applies to handle the $ h_\star $. We find that
$$ \bigg\vert \int_{\RR^3} h_j \ dx \bigg\vert \lesssim \,  M \, \parallel U^{\mc}_{\mv}  \parallel^2_{L^2} \, , \qquad \forall \, j \in \{0, \cdots,4 \} \, .
$$
By selecting $ c $ small enough, we can absorb the term implying the $ L^2 $-norm of $ \nabla (\nabla \cdot v) $. Note that the presence of 
a bulk (fluid) viscosity is crucial here to compensate for losses related to $ \nabla (\nabla \cdot v) $. At the end, there remains 
$$ \frac{d}{dt} \bigg( \int_{\RR^3} \vert U^{\mc}_{\mv}  (t,\cdot)  \vert^2 \ dx \bigg) \leq C \ M \ \Bigl( 1 + \frac{M}{\nu} \Bigr) \ \int_{\RR^3} 
\vert U^{\mc}_{\mv}  (t,\cdot)  \vert^2 \ dx \, . $$
It suffices to implement Gr\"onwall's inequality to recover the inequality (\ref{0-energy estimatebis}) for some convenient constant $ C $.
\end{proof}

\noindent Starting from there, the construction of $ \cH^{\tilde s} $-solutions to (\ref{compressiblebcomplet}) can be achieved through the 
general strategy \cite{Alin,Maj84} already explained at the end of Section \ref{vorcasincompressible}. Note that the lifespan $ T $ does 
depend on the bulk viscosity $ \nu $. It shrinks to $ 0 $ when $ \nu $ goes to $ 0 + $. Then, from the $ \cH^{\tilde s} $-solution to 
(\ref{compressiblebcomplet}), we can recover solutions to (\ref{compressible}) which are such that $ (q,v,B^*) \in \cH^s \times  \cH^s \times 
\cH^{s-1} $ with $ s > 7/2 $. Starting from there and from (\ref{inverseconstitutive}), we get that $ \nabla \times B \in \cH^s $, while 
$ \nabla \cdot B= \nabla \cdot B_0^* \in \cH^s $. It follows that $ B \in \cH^{s+1} $ as indicated. This concludes the proof of Theorem 
\ref{theoprin} at least when $ s > 7/2 $. The case $ s > 5/2 $ is investigated in the next section.


\section{The potential formulations} \label{potentialformulations} 
The aim of this section is to develop an alternative to the vorticity formulations. The basic idea is to integrate $ B^* $ instead of looking 
at derivatives of $ (q,v) $. From $ \mathscr P B^* $, or just from $ B^* $ when $ \nabla \cdot B^* = 0 $, 
we can extract a {\it magnetic potential} $ A^* $ which is defined by
\begin{equation}\label{definedbyA*}  
\nabla \times A^* = \mathscr P B^* \, , \qquad \nabla \cdot A^* = 0 \, . 
\end{equation}
From now on, we imply $ \rho $, $ v $ and $ A^* $ as new unknowns, with the following motivations:
\begin{itemize}
\item [-] {\it A simplified presentation.} Like in (\ref{compressible}), the (compressible) potential formulation involves $ 7 $ unknowns, 
namely the components of $ \rho $, $ v $ and $ A^* $. Contrary to (\ref{compressiblebcomplet}), there is no need of introducing four 
supplementary unknowns.
\item [-] {\it A better regularity result.} Up to now, we have worked with $ s > 7/2 $. As stated in Theorems \ref{wellposednessideal} and 
\ref{theoprin}, we would like to improve the threshold up to $ s > 5/2 $. To this end, we will avoid the use of Proposition \ref{ellllli} which 
is costly in terms of the regularity of $ \rho $.
\item [-] {\it Additional insights.} This change of point of view offers complementary perspectives. For instance, in the compressible case,
it allows to better understand why a bulk (fluid) viscosity is required for stability. 
\end{itemize}

\noindent The potential formulations are therefore more simple in appearance and more efficient in some respects. However, there 
are important subtleties when performing $ L^2 $-energy estimates, explaining why this approach has been postponed until now. 
On the other hand, the vorticity formulations are necessary and instructive to understand how higher order energy estimates can 
be performed at the level of the potential formulations.

\smallskip

\noindent From (\ref{retainthat}) and (\ref{definedbyA*}), we deduce that $ \part_t B^* = \nabla \times \part_t A^* $. It follows that the curl 
operator can be put in factor of the last equation of (\ref{compressible}), where it can be cancelled (modulo a gradient). This idea applies 
quite directly in the incompressible context of Subsection \ref{potcasincompressible}. It must be carefully implemented in the compressible framework 
of Subsection \ref{potcompressibleframework}.


\subsection{The incompressible situation} \label{potcasincompressible} The condition $ \nabla \cdot B^* = 0 $ and the constant density make things 
easier. In Paragraph \ref{paraideal0}, we derive the incompressible potential equations. In Paragraph \ref{paraideal1}, we perform $ L^2 $-energy 
estimates on linearized equations. In Paragraph \ref{paraideal2}, we conclude the proof of Theorem \ref{wellposednessideal}.


\subsubsection{The incompressible potential equations} \label{paraideal0} The unknown is $ U_{\mathfrak p}^\mi := (v,A^*) $. We consider the system
\begin{equation}\label{syssimpliidealpot}  
\left \lbrace \begin{array}{l}
  \displaystyle \part_t v + (v\cdot \nabla) v - (A^*-A) \times (\nabla \times A^*) + \nabla p = 0 \, , 
  \medskip \\
\displaystyle \part_t A^* - \bigl(v - d \, (A^*-A) \bigr) \times ( \nabla \times A^*) - (A^*-A) \times (\nabla \times v) + \nabla e = 0 \, ,
\end{array} \right. 
\end{equation}
where both $ v $ and $ A^* $ are solenoidal vector fields
\begin{equation}\label{solenoidalvectorfields}  
\nabla \cdot v = 0 \, , \qquad \nabla \cdot A^* = 0 \, .
\end{equation}
while $ A $ can be obtained from $ A^* $ through 
\begin{equation}\label{lienBB*2idealpot}  
A = (\id-\Delta)^{-1} A^* \, , \qquad \nabla \cdot A = 0 \, ,
\end{equation}
The introduction of the Lagrange multipliers (scalar functions) $ p $ and $ e $ is needed above to ensure the propagation of the constraints $ \nabla \cdot v = 0 $
and $ \nabla \cdot A^* = 0 $.

\begin{lem} \label{potpotideal} [Link between the incompressible potential and vorticity formulations] Let $ U_{\mathfrak p}^\mi = (v,A^* ) $ be some 
$ \cH^s $-solution on $ [0,T] $ to (\ref{syssimpliidealpot})-(\ref{lienBB*2idealpot})-(\ref{solenoidalvectorfields}) with $ s > 5/2 $. Define
\begin{equation}\label{defineBB*}  
B^* := \nabla \times A^* \, , \qquad B :=  \nabla \times A \, .
\end{equation}
Then, $ (v,B^*,B) $ is a solution on $ [0,T] $ to (\ref{divfreeini})-(\ref{syssimpli})-(\ref{lienBB*2}), which is as in (\ref{reginidatainpropa}).
  \end{lem}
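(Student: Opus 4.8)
The plan is to verify, one after another, the items that make up (\ref{divfreeini})-(\ref{syssimpli})-(\ref{lienBB*2})-(\ref{reginidatainpropa}), using only the structure of (\ref{syssimpliidealpot})-(\ref{lienBB*2idealpot})-(\ref{solenoidalvectorfields}). The divergence constraints are immediate: $ \nabla \cdot v = 0 $ is part of (\ref{solenoidalvectorfields}), while $ \nabla \cdot B^* = \nabla \cdot (\nabla \times A^*) = 0 $ and $ \nabla \cdot B = \nabla \cdot (\nabla \times A) = 0 $ hold identically, which gives (\ref{divfreeini}). Applying $ \nabla \times $ to (\ref{lienBB*2idealpot}) and using that the Fourier multiplier $ (\id - \Delta)^{-1} $ commutes with $ \nabla \times $ produces $ B = \nabla \times A = (\id - \Delta)^{-1} \nabla \times A^* = (\id - \Delta)^{-1} B^* $, i.e. (\ref{lienBB*2}).

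The key observation is the identity $ A^* - A = \nabla \times B $. Indeed, (\ref{lienBB*2idealpot}) gives $ (\id - \Delta) A = A^* $, hence $ -\Delta A = A^* - A $, while $ \nabla \cdot A = 0 $ gives $ \nabla \times B = \nabla \times \nabla \times A = -\Delta A $. Thus every occurrence of $ A^* - A $ in (\ref{syssimpliidealpot}) can be read as $ \nabla \times B $, and each $ \nabla \times A^* $ as $ B^* $. Carrying out this substitution in the first line of (\ref{syssimpliidealpot}) and flipping the cross product via $ a \times b = -b \times a $ turns it, with the \emph{same} Lagrange multiplier $ p $, into $ \part_t v + (v \cdot \nabla) v + B^* \times (\nabla \times B) + \nabla p = 0 $, the first equation of (\ref{syssimpli}). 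For the magnetic equation I would substitute likewise in the second line of (\ref{syssimpliidealpot}), then apply the curl: this annihilates $ \nabla e $, yields $ \part_t B^* = \nabla \times \part_t A^* $ on the left-hand side (legitimate since the equation forces $ \part_t A^* \in C([0,T];\cH^{s-1}) $ for $ s>5/2 $), and after the sign flips inside each $ \nabla \times $ delivers exactly $ \part_t B^* + \nabla \times ( B^* \times (v - d\, \nabla \times B)) + \nabla \times ((\nabla \times v) \times (\nabla \times B)) = 0 $, the second equation of (\ref{syssimpli}).

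Finally I would record the regularity. From $ (v,A^*) \in C([0,T]; \cH^s \times \cH^s) $ with $ \nabla \cdot v = \nabla \cdot A^* = 0 $ one gets at once $ v \in C([0,T]; \cD^s) $ and $ B^* = \nabla \times A^* \in C([0,T]; \cD^{s-1}) $; moreover $ A = (\id - \Delta)^{-1} A^* \in C([0,T]; \cH^{s+2}) $, so $ B = \nabla \times A \in C([0,T]; \cD^{s+1}) $, the solenoidality of $ B^* $ and $ B $ having already been noted. This is (\ref{reginidatainpropa}). I do not expect a genuine obstacle here: the only points deserving a word of care are the two commutations ($ \nabla \times $ with $ \part_t $ and with $ (\id - \Delta)^{-1} $, both trivial for tempered distributions at these regularities) and the observation that $ e $ simply disappears under the curl while $ p $ is transported verbatim, so that no compatibility relation linking $ p $ and $ e $ is required.
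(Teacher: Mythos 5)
Your proposal is correct and follows essentially the same route as the paper's proof: the divergence constraints and (\ref{lienBB*2}) are immediate, the whole argument rests on the identity $ A^* - A = -\Delta A = \nabla \times (\nabla \times A) = \nabla \times B $ (valid because $ \nabla \cdot A = 0 $), and one then substitutes and applies the curl to the second line of (\ref{syssimpliidealpot}), which eliminates $ \nabla e $ and yields (\ref{syssimpli}) with the same multiplier $ p $. Your added remarks on the commutations and on the regularity bookkeeping leading to (\ref{reginidatainpropa}) are accurate and consistent with the paper.
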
 

\begin{proof} By construction, we have (\ref{divfreeini}). On the other hand, by applying the curl operator to (\ref{lienBB*2idealpot}), 
we get (\ref{lienBB*2}). Since $ \nabla \cdot A = 0 $, the relation (\ref{lienBB*2idealpot}) is the same as 
$$ A^* - A = - \Delta A = \nabla \times (\nabla \times A) = \nabla \times B \, . $$
It follows that
$$ - (A^*-A) \times (\nabla \times A^*) =  B^* \times (\nabla \times B) \, . $$
Exploiting the two above relations and applying the curl operator $ \nabla \times $ to the second line of (\ref{syssimpliidealpot}),
the term $ \nabla e $ disappears and we have directly access to (\ref{syssimpli}).
\end{proof} 

\noindent At the initial time $ t = 0 $, we impose
\begin{equation}\label{inidatasimplpotnodot}  
U^\mi_{\mathfrak p} (0,\cdot) = U^\mi_{\mathfrak p 0} = (v_0, A^*_0) \in \cD^{s} (\RR^3;\RR^3)^2 \, , \qquad s > 5/2  \, .
\end{equation}


\subsubsection{$ L^2 $-energy estimates} \label{paraideal1} The conserved quantity $ \cE^\mi $ of Lemma \ref{energypreserin},
see the definition (\ref{conservedprop}), can be reformulated according to 
\begin{equation}\label{finiteenerg} 
\cE^\mi := \frac 1 2\int_{\RR^3} \big(\vert v \vert^2 + \vert \nabla \times (\id-\Delta)^{-1} A^* \vert^2 + \vert \Delta \, (\id-\Delta)^{-1} 
A^* \vert^2 \big) \ dx < + \infty \, . 
\end{equation}
This already furnishes some (high frequency) $ L^2 $-bound concerning $ U_{\mathfrak p}^\mi $. But this is 
not enough. To construct solutions by a fixed point argument, we also need to consider the stability issue. To this end, we have to 
look at the linearized equations coming from (\ref{syssimpliidealpot}), dealing with $ \dot U^\mi_{\mathfrak p} = ( \dot v , \dot A^*) $. 
When doing this, the term which for instance is at top right of (\ref{syssimpliidealpot}) leads to
$$ - (A^*-A) \times (\nabla \times \dot A^*) + (\nabla \times A^*) \times \bigl( \dot A^* - (\id - \Delta)^{-1} \dot A^*\bigr) \, . $$
Given a Lipschitz field $ A^* $, the right hand side is such that
$$ \parallel (\nabla \times A^*) \times \bigl( \dot A^* - (\id - \Delta)^{-1} \dot A^*\bigr) \parallel_{L^2} \lesssim  \parallel \dot U^\mi_{\mathfrak p}  
\parallel_{L^2} \, . $$
Such contributions clearly cannot undermine the local $ L^2 $-stability. Thus, to simplify the presentation, they can be ignored. We 
can focus on
\begin{equation}\label{syssimpliidealpotdot}  
\left \lbrace \begin{array}{l}
  \displaystyle \part_t \dot v + (v\cdot \nabla) \dot v + \nabla \dot p - (A^*-A) \times (\nabla \times \dot A^*) = 0 \, , 
  \medskip \\
\displaystyle \part_t \dot A^* - \bigl(v - d \, (A^*-A) \bigr) \times ( \nabla \times \dot A^*) - (A^*-A) \times (\nabla \times \dot v) + \nabla \dot e= 0 \, ,
\end{array} \right. 
\end{equation}
together with
\begin{equation}\label{solenoidalvectorfieldsdot}  
\nabla \cdot \dot v = 0 \, , \qquad \nabla \cdot \dot A^* = 0 \, .
\end{equation}
At the initial time $ t = 0 $, we impose
\begin{equation}\label{inidatasimplpotdot}  
\dot U^\mi_{\mathfrak p} (0,\cdot) = \dot U^\mi_{\mathfrak p 0} = (\dot v_0, \dot A^*_0) \in \cD^0 (\RR^3;\RR^3)^2 \, .
\end{equation}

\begin{lem} \label{L2inestimatepotre} [$L^2 $-energy estimates for the linearized incompressible  potential equations] Let $ T > 0 $.  Assume 
that $ U^\mi_{\mathfrak p} = (v,A^*) $ is such that $ U^\mi_{\mathfrak p} \in C([0,T];\cD^s) $ for some $ s> 5/2 $. Then, the Cauchy problem 
(\ref{syssimpliidealpotdot})-(\ref{solenoidalvectorfieldsdot}) with initial data (\ref{inidatasimplpotdot}) has a solution on $ [0,T] $. 
Moreover, we can find a constant $ C $ depending only on the 
$ C([0,T];\cH^s) $-norm of $ U^\mi_{\mathfrak p} $ such that
\begin{equation}\label{0-energy estimatepot}  
\parallel \dot U^\mi_{\mathfrak p} (t,\cdot) \parallel_{L^2} \leq \parallel \dot U^\mi_{\mathfrak p 0} \parallel_{L^2} \, e^{C \, t} \, , \qquad \forall \, t \in [0,T]  \, .
\end{equation}
\end{lem}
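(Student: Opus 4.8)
The plan is to treat \eqref{syssimpliidealpotdot}--\eqref{solenoidalvectorfieldsdot} as a linear evolution problem with the \emph{given} smooth coefficients $v$, $A^*$ and $A=(\id-\Delta)^{-1}A^*$, to get existence by reduction to a symmetric hyperbolic system, and to extract \eqref{0-energy estimatepot} from a direct $L^2$ computation in which the first‑order coupling between $\dot v$ and $\dot A^*$ collapses to an order‑zero remainder. For the existence part I would apply the Leray projector $\mathscr P$ to \eqref{syssimpliidealpotdot}: this removes the Lagrange multipliers $\nabla\dot p$ and $\nabla\dot e$ (which are recovered a posteriori by solving the elliptic equations obtained from taking the divergence of the two equations, so that the constraints \eqref{solenoidalvectorfieldsdot} propagate), leaving a system $\partial_t\dot U^\mi_{\mathfrak p}+\mathscr A(t)\,\dot U^\mi_{\mathfrak p}=0$ on divergence‑free $L^2$ fields. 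Writing $C:=A^*-A$, a short symbol computation (using $\mathscr P(\xi)\xi=0$ and that $\dot v,\dot A^*$ are divergence free) shows that the principal symbol of $\mathscr A(t)$ is $i$ times a real symmetric matrix, with entries built from $v\cdot\xi$, $(v-dC)\cdot\xi$ and $C\cdot\xi$; hence $\mathscr A(t)$ is skew‑adjoint modulo an operator of order zero with $\cH^{s}$ coefficients, and the existence of a unique solution $\dot U^\mi_{\mathfrak p}\in C([0,T];\cD^0)$ follows from the standard theory of linear symmetric hyperbolic systems \cite{Alin,Maj84} (alternatively by Friedrichs mollification together with the a priori estimate below).

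For \eqref{0-energy estimatepot} I would take the $L^2$ scalar product of the first equation of \eqref{syssimpliidealpotdot} with $\dot v$, of the second with $\dot A^*$, and add. The multiplier terms drop because $\nabla\cdot\dot v=\nabla\cdot\dot A^*=0$; since $\nabla\cdot v=0$ and $\nabla\cdot C=\nabla\cdot A^*-\nabla\cdot A=0$, the $\dot v$--transport term $\int\dot v\cdot(v\cdot\nabla)\dot v$ vanishes, and the term $-(v-dC)\times(\nabla\times\dot A^*)$ of the $\dot A^*$--equation, rewritten via the curl self‑adjointness and $\nabla\times(\dot A^*\times(v-dC))$ and the divergence‑free conditions, reduces to the order‑zero contribution $-\int\dot A^*\cdot(\dot A^*\cdot\nabla)(v-dC)\,dx$. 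The heart of the matter is the pair
\[
 I_1:=\int_{\RR^3}\dot v\cdot\bigl(C\times(\nabla\times\dot A^*)\bigr)\,dx,\qquad
 I_2:=\int_{\RR^3}\dot A^*\cdot\bigl(C\times(\nabla\times\dot v)\bigr)\,dx,
\]
each of which, taken alone, costs one derivative of $\dot U^\mi_{\mathfrak p}$. Using the scalar triple product, the self‑adjointness of the curl and $\nabla\cdot\dot v=\nabla\cdot C=0$, one gets $I_1=\int\dot A^*\cdot\bigl[(C\cdot\nabla)\dot v-(\dot v\cdot\nabla)C\bigr]\,dx$; and the identity $C\times(\nabla\times\dot v)=\nabla(C\cdot\dot v)-(C\cdot\nabla)\dot v-(\dot v\cdot\nabla)C-\dot v\times(\nabla\times C)$ together with $\int\dot A^*\cdot\nabla(C\cdot\dot v)\,dx=-\int(\nabla\cdot\dot A^*)(C\cdot\dot v)\,dx=0$ gives $I_2=-\int\dot A^*\cdot\bigl[(C\cdot\nabla)\dot v+(\dot v\cdot\nabla)C+\dot v\times(\nabla\times C)\bigr]\,dx$. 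Adding, the first‑order piece $(C\cdot\nabla)\dot v$ cancels and $I_1+I_2=-\int\dot A^*\cdot\bigl[2(\dot v\cdot\nabla)C+\dot v\times(\nabla\times C)\bigr]\,dx$, which only involves $\nabla C$. Since $A^*\in\cD^s$ with $s>5/2$, one has $C=A^*-A=\bigl(\id-(\id-\Delta)^{-1}\bigr)A^*\in\cD^s$ and, by the embedding $\cH^{s-1}\hookrightarrow L^\infty$, $\|\nabla C\|_{L^\infty}+\|\nabla\times C\|_{L^\infty}+\|\nabla(v-dC)\|_{L^\infty}\lesssim\|U^\mi_{\mathfrak p}\|_{C([0,T];\cH^s)}$. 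Collecting the estimates yields $\tfrac{d}{dt}\|\dot U^\mi_{\mathfrak p}(t,\cdot)\|_{L^2}^2\le C\,\|\dot U^\mi_{\mathfrak p}(t,\cdot)\|_{L^2}^2$ with $C=C\bigl(\|U^\mi_{\mathfrak p}\|_{C([0,T];\cH^s)}\bigr)$, and Gr\"onwall's lemma gives \eqref{0-energy estimatepot}.

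The main obstacle is precisely the cancellation in $I_1+I_2$: taken individually the two coupling terms are genuinely first order in $\dot U^\mi_{\mathfrak p}$, so a naive bound would lose a derivative and the scheme would not close; only the combined algebraic structure, exploited through the vector identities above together with the divergence‑free constraints \eqref{solenoidalvectorfieldsdot}, reduces them to an order‑zero remainder controlled by $\nabla C$. This is the subtlety alluded to before Subsection~\ref{potcasincompressible} — the same mechanism that makes the potential formulation symmetric hyperbolic after Leray projection — and it is what pins the threshold to $s>5/2$ (so that $\nabla C\in L^\infty$) rather than to something weaker. A secondary point to check along the way is that the linearization terms of the type $\bigl(\dot A^*-(\id-\Delta)^{-1}\dot A^*\bigr)\times(\nabla\times A^*)$, dropped when passing to \eqref{syssimpliidealpotdot}, are indeed of order zero in $\dot U^\mi_{\mathfrak p}$ and hence harmless for the $L^2$ balance, as already observed in the text preceding \eqref{syssimpliidealpotdot}.
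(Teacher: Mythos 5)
Your proposal is correct and follows essentially the same route as the paper: a direct $L^2$ pairing with $(\dot v,\dot A^*)$ in which the Lagrange multipliers and transport terms drop by the solenoidal constraints, and the first-order couplings are reduced to zero-order remainders involving only $\nabla(A^*-A)$ and $\nabla v$ — your triple-product/curl-adjointness manipulation of $I_1+I_2$ and of the $(v-d(A^*-A))$ term is exactly the paper's passage from $\mathscr T^*_C$ to $\mathscr T_C$ via Remark \ref{yeti} and identity (\ref{followingidentity}) — followed by Gr\"onwall. Your added sketch of existence via Leray projection and linear symmetric-hyperbolic theory is consistent with what the paper leaves implicit.
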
 

\noindent Any $ \cH^s $-solution to the initial value problem 
(\ref{syssimpliidealpot})-(\ref{lienBB*2idealpot})-(\ref{solenoidalvectorfields})-(\ref{inidatasimplpotnodot})
leads to a solution to (\ref{syssimpliidealpotdot})-(\ref{solenoidalvectorfieldsdot}) with initial data $ \dot U^\mi_{\mathfrak p 0} = U^\mi_{\mathfrak p 0} $. As a 
consequence, the proof of Lemma \ref{L2inestimatepotre} gives another access to some $ L^2 $-bound, namely
$$ \parallel U^\mi_{\mathfrak p} (t,\cdot) \parallel_{L^2} \leq \parallel U^\mi_{\mathfrak p 0} \parallel_{L^2} \, e^{C \, t} \, , \qquad \forall \, t \in [0,T]  \, . $$

\begin{proof} To gain a better grasp of the arguments, we have made a clear distinction between two kinds of quantities:
\begin{itemize}
\item [-] On the one hand, there are those which play the role of coefficients and which are managed through the assumption
\begin{equation}\label{ass-coef}   
\parallel U^\mi_{\mathfrak p} \parallel_{L^\infty ([0,T] \times \RR^3)} + \parallel \nabla U^\mi_{\mathfrak p} \parallel_{L^\infty ([0,T] \times \RR^3)} 
\lesssim M:= \parallel U^\mi_{\mathfrak p} \parallel_{C([0,T];\cH^s)} < + \infty \, .
\end{equation}
\item [-] On the other hand, there are those which are handled as unknowns and which are market by a dot, like $ \dot U^\mi_{\mathfrak p} = (\dot v,\dot A^*) $. 
\end{itemize}

\noindent We now exploit the formalism of Remark \ref{yeti}.  We denote by $ \mathscr T_C $ with $ C = A^*-A $ or $ C = v $ the operator 
defined at the level of (\ref{decompoooo}). We find that $ \mathscr T_C^* \dot A^* = - C \times (\nabla \times \dot A^*) $. Thus, the system 
(\ref{syssimpliidealpotdot}) can be rewritten according to
\begin{equation}\label{syssimpliidealrewritten}  
\left \lbrace \begin{array}{l}
  \displaystyle \part_t \dot v + (v\cdot \nabla) \dot v + \nabla \dot p + \mathscr T_{A^*-A}^* \dot A^* = 0 \, , 
  \medskip \\
\displaystyle \part_t \dot A^* + \mathscr T_v^* \dot A^*- d \ \mathscr T_{A^*-A}^* \dot A^* + \mathscr T_{A^*-A}^* \dot v + \nabla \dot e = 0 \, .
\end{array} \right. 
\end{equation}
As already noted, the operator $ \mathscr T_C $ is not skew-adjoint and, of course, neither is $ \mathscr T_C^* $. But (Remark \ref{yeti}), 
knowing that the contribution $ \nabla \cdot B^* $ is given (or can be forgotten), the action of $ \mathscr T_C $ (viewed as $ \tilde {\mathscr T}_C $) 
on $ B^* $ becomes skew-adjoint. 
This argument was crucial in Sections \ref{vorcasincompressible} and \ref{vorcompressibleframework}. None of that applies to the action of 
$ \mathscr T_C^* $ on $ \dot A^* $ (because the analogue of $ \nabla \cdot B^* = 0 $ in the context of $ \mathscr T_C^* \dot A^* $ is not 
$ \nabla \cdot \dot A^* = 0 $). In other words, (\ref{syssimpliidealrewritten}) is not well-posed, while its dual version is, in the sense that it 
becomes symmetric under the condition (\ref{solenoidalvectorfieldsdot}). To put this principle into practice, we multiply the first and second 
equation of (\ref{syssimpliidealrewritten}) respectively by $ \dot v $ and $ \dot A^* $; we integrate with respect to the variable $ x $; and then 
we force the emergence of the operator $ \mathscr T_C $ (instead of $ \mathscr T_C^* $) by passing to the adjoint. Since $ v \in \cD^{s} $, 
the contribution related to $ v \cdot \nabla $ disappears. Since $ \nabla \cdot \dot A^* = 0 $, the term involving $  \nabla \dot e $ is eliminated. 
Denoting by $ \langle \cdot, \cdot \rangle $ the scalar product in $ L^2 $, this furnishes
 $$   \frac{1}{2} \ \frac{d}{dt}  \bigg( \int_{\RR^3} \vert \dot U^\mi_{\mathfrak p} (t,\cdot)  \vert^2 \ dx  \bigg) = -  \, \langle  \mathscr T_{A^*-A} \dot v , \dot A^*\rangle 
 - \langle  \mathscr T_v \dot A^*, \dot A^*\rangle +\langle  \mathscr T_{A^*-A} \dot A^* , d \, \dot A^* - \dot v \rangle \, . $$
 Observe the changeover from $ \mathscr T_C^* $ to $ \mathscr T_C $. From there, the decomposition (\ref{definitionufbis}) becomes pertinent. 
 In the actual incompressible situation, using (\ref{followingidentity}), this yields
  $$ \begin{array}{rl} 
\displaystyle \frac{1}{2} \ \frac{d}{dt}  \bigg( \int_{\RR^3} \vert \dot U^\mi_{\mathfrak p} (t,\cdot)  \vert^2 \ dx  \bigg) = \! \! \! & \displaystyle \frac{1}{2} 
\int_{\RR^3} \Bigl( \bigl( (A^* - A) \cdot \nabla \bigr) \bigl( \dot A^* \cdot ( 2 \, \dot v - d \, \dot A^* ) \bigr) + (v\cdot \nabla ) \vert \dot A^* \vert^2 
\Bigr) \ dx \smallskip \\
\ & \displaystyle - \langle (\dot v \cdot \nabla) (A^*-A) , \dot A^* \rangle  - \langle (\dot A^* \cdot \nabla) v , \dot A^* \rangle  \smallskip \\
\ & \displaystyle + \langle (\dot A^* \cdot \nabla) (A^*-A) , d \, \dot A^* - \dot v \rangle \, . 
\end{array} $$
In the right hand side, since both $ A^* -A $ and $ v $ are solenoidal vector fields, after integration by parts, the first line just disappears. Exploiting 
(\ref{ass-coef}) to control the two last lines, we find that
 $$ \frac{d}{dt}  \bigg( \int_{\RR^3} \vert \dot U^\mi_{\mathfrak p} (t,\cdot)  \vert^2 \ dx  \bigg) \lesssim  \int_{\RR^3} \vert \dot U^\mi_{\mathfrak p} (t,\cdot)  \vert^2 \ dx \, . $$
 By Gr\"onwall's inequality, we recover (\ref{0-energy estimatepot}).
\end{proof}


\subsubsection{End of the proof of Theorem \ref{wellposednessideal}} \label{paraideal2} The construction of $ \cH^s $-solutions to (\ref{syssimpliidealpot}) 
follows the (general standard) lines mentioned before. It will not be detailed. But, we would like to add just a few words about how higher order estimates
can be obtained. In the context of (\ref{syssimpliidealpot}), there are two ways to proceed:
\begin{itemize}
\item [-] The incompressible vorticity formulation is in fact similar to a derived version of the incompressible potential formulation. It follows that the preceding $ L^2 $-estimates 
for (\ref{syssimplinet})-(\ref{divfreekeep}) correspond to one order estimates for (\ref{syssimpliidealpot})-(\ref{lienBB*2idealpot}). In other words, the work of 
 Subsection \ref{lwellposedin} can be seen as the first stage to check that higher order estimates (namely $ \cH^1 $-estimates) are available for the incompressible 
 potential formulation. 
\item [-] Looking at the linearized equations (\ref{syssimpliidealrewritten}) with adequate source terms amounts to the same thing as studying the equations 
satisfied by the derivatives of $ U^\mi_{\mathfrak p} $. Thus, the proof of Lemma \ref{L2inestimate} already provides with convincing points of reference towards 
$ \cH^s $-estimates.
\end{itemize}


\subsection{The compressible framework} \label{potcompressibleframework} The general lines are as in Subsection \ref{potcasincompressible} but the 
variations of $ \rho $ oblige to adapt a number of aspects. The first step (in Paragraph \ref{paracomp1}) is to correctly interpret (\ref{lienBB*ini}) in terms of the 
potentials $ A^* $ and $ A $. The second stage (in Paragraph \ref{paracomp2}) is to propose potential equations that are compatible with (\ref{compressible}).
Then (in Paragraph \ref{paracomp3}), we explain how to obtain $ L^2 $-energy estimates on linearized equations.  


\subsubsection{The potential constitutive relation} \label{paracomp1} Keeping (\ref{definedbyA*}) and assuming that $ \mathscr P B 
= \nabla \times A $, the constitutive relation (\ref{lienBB*ini}) is the 
same as
$$ \mathscr P B^* - \mathscr P B - \nabla \times \bigg( \frac{\nabla \times B}{\rho(x)} \bigg) = 
\nabla \times \bigg( A^* - A - \frac{\nabla \times B}{\rho(x)} \bigg) = 0 \, . $$
This suggests to impose 
\begin{equation}\label{lienAA*inireprem} 
A^* - A - \frac{\nabla \times B}{\rho(x)} = 0 \, . 
\end{equation}
Exploiting (\ref{lienBB*inire}) with again (\ref{defineBB*}), this is equivalent to
\begin{equation}\label{lienAA*inire}  
 A = A^* - ( \mathscr L^{\mc}_2)^{-1} \, \nabla \times (\nabla \times A^*) = ( \mathscr L^{\mc}_2)^{-1} ( \rho \, A^*) \, .
\end{equation}
By this way, $ A $ is deduced from $ A^* $ through a pseudo-differential operator which is of order zero (or less). In what follows, 
we do not need Proposition \ref{ellllli}. Instead, we are satisfied with Lemma \ref{zeroorder} leading to
\begin{equation}\label{continuouspassaa*}  
\parallel A \parallel_{L^2} \lesssim \parallel A^* \parallel_{L^2} \, . 
\end{equation}
One of the difficulties is to show that the choice (\ref{lienAA*inire}) is appropriate.


\subsubsection{The compressible potential equations} \label{paracomp2} Let $ p : \RR \rightarrow \RR $ be any smooth 
function (of $ \rho $). Fix some initial data $ (\rho_0,v_0,B_0^*) $ as in (\ref{reginidata}). From $ B_0^* $, extract the vector 
field $ A_0^* $ which is such that
$$ \nabla \times A_0^* = \mathscr P \, B_0^* \, , \qquad \nabla \cdot A_0^* = 0 \, , \qquad A_0^* \in \cH^s \, . $$
The unknown is $ U_{\mathfrak p}^{\mc} := (\rho,v,A^*) $. Consider the system
\begin{equation}\label{syssimplicompot}  
\left \lbrace \begin{array}{l}
\part_t \rho + (v\cdot \nabla) \rho + \rho \, \nabla \cdot v = 0 \, , \medskip \\
  \displaystyle \part_t v + (v\cdot \nabla ) v + \frac{\nabla p}{\rho} - (A^*-A) \times (\nabla \times A^*) \\
  \qquad \qquad \qquad \qquad \quad + \, \nabla \bigl( \vert A^*-  A \vert^2 /2\bigr) = \nu \ \nabla (\nabla \cdot v) 
  + (A^*-A) \times \mathscr Q B_0^* \, ,  \medskip \\
\displaystyle \part_t A^* - \bigl(v - d \, (A^*-A) \bigr) \times ( \nabla \times A^*) - (A^*-A) \times (\nabla \times v) + \nabla e  \\
\qquad \qquad \qquad \qquad \qquad \qquad \qquad \qquad = \bigl(v - d \, (A^*-A) \bigr) \times \mathscr Q B_0^* \, ,
\end{array} \right. 
\end{equation}
together with 
\begin{equation}\label{ehhhhoui}  
 \nabla \cdot A^* = 0 \, , 
 \end{equation}
where $ A $ is deduced from $ A^* $ through (\ref{lienAA*inire}). At the initial time $ t = 0 $, we impose
\begin{equation}\label{inidatasimplpotnodotcomp}  
U^{\mc}_{\mathfrak p} (0,\cdot) = U^{\mc}_{\mathfrak p 0} = (\rho_0,v_0, A^*_0) \in \cH^s(\RR^3;\RR) \times \cH^s (\RR^3;\RR^3)^2 \, , \qquad s > 5/2  \, .
\end{equation}

\begin{lem} \label{Link between the compressible potential} [Link between the compressible potential and vorticity formulations] 
Let $ U_{\mathfrak p}^{\mc} $ be some $ \cH^s $-solution on $ [0,T] $ to (\ref{lienAA*inire})-(\ref{syssimplicompot}) with initial data as in 
(\ref{inidatasimplpotnodotcomp}). Define
\begin{equation}\label{defineBB*autre}  
B^* := \mathscr Q B_0^* + \nabla \times A^* \, , \qquad B :=  \mathscr Q B_0^* + \nabla \times A \, .
\end{equation}
Then, $ (\rho,v,B^*,B) $ is a solution on $ [0,T] $ to (\ref{compressible})-(\ref{lienBB*ini}), which is associated with the initial 
data $ (\rho_0,v_0,B_0^*) $, and which is as in (\ref{sensesmooth}).
  \end{lem}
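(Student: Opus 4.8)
The plan is to run the same argument as for Lemma~\ref{potpotideal}, the two genuinely new features being the variable coefficient $\rho$ sitting inside $\mathscr L^{\mc}_2$ and the $\mathscr Q B_0^*$--source terms built into (\ref{syssimplicompot}). Throughout I would use that along an $\cH^s$--solution the density $\rho(t,\cdot)$ stays bounded and bounded away from $0$ (propagation of (\ref{awayvacuum}) by the continuity equation), so that Lemma~\ref{zeroorder} applies at each time and $A=(\mathscr L^{\mc}_2)^{-1}(\rho\,A^*)$ is well defined.

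First I would extract the constitutive relation. From (\ref{lienAA*inire}), i.e.\ $\mathscr L^{\mc}_2 A=\rho\,A^*$, one gets $\rho\,A+\nabla\times\nabla\times A=\rho\,A^*$, hence $\nabla\times\nabla\times A=\rho\,(A^*-A)$; since $\mathscr Q B_0^*$ is a gradient, $\nabla\times\mathscr Q B_0^*=0$, so (\ref{defineBB*autre}) gives $\nabla\times B=\nabla\times\nabla\times A$ and therefore $A^*-A=(\nabla\times B)/\rho$. Applying $\nabla\times$ and using $B^*-B=\nabla\times(A^*-A)$ yields $B^*=B+\nabla\times\bigl((\nabla\times B)/\rho\bigr)$, which is (\ref{lienBB*ini}). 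Taking divergences gives $\nabla\cdot B^*=\nabla\cdot B=\nabla\cdot B_0^*$, in agreement with Lemma~\ref{preservationB*}.

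Next I would match the three evolution lines with (\ref{compressible}). The continuity equation is literally the same. For the momentum line I would use $\nabla\times A^*+\mathscr Q B_0^*=B^*$ to fuse the two cross products with the $\mathscr Q B_0^*$--source into $-(A^*-A)\times B^*=B^*\times\frac{\nabla\times B}{\rho}$, and rewrite $\nabla(|A^*-A|^2/2)=\nabla\bigl(|\nabla\times B|^2/(2\rho^2)\bigr)$ via $A^*-A=(\nabla\times B)/\rho$; this reproduces the second line of (\ref{compressible}). For the magnetic line I would apply $\nabla\times$ to the third line of (\ref{syssimplicompot}); then $\nabla\times\nabla e=0$, $\partial_t(\nabla\times A^*)=\partial_t B^*$, and once more $\nabla\times A^*+\mathscr Q B_0^*=B^*$ merges the $\bigl(v-d(A^*-A)\bigr)\times(\nabla\times A^*)$ term with its $\mathscr Q B_0^*$--source, leaving
\[
\partial_t B^*+\nabla\times\bigl(B^*\times(v-d\,(\nabla\times B)/\rho)\bigr)+\nabla\times\bigl((\nabla\times v)\times(\nabla\times B)/\rho\bigr)=0,
\]
the third line of (\ref{compressible}). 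All of this is bookkeeping with the identity (\ref{followingidentity}) and the antisymmetry of the cross product, to be carried out carefully as regards signs; I do not expect a real difficulty here.

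Finally I would settle the initial data and regularity. At $t=0$, (\ref{defineBB*autre}) gives $B^*(0,\cdot)=\mathscr Q B_0^*+\nabla\times A_0^*=\mathscr Q B_0^*+\mathscr P B_0^*=B_0^*$, so $(\rho,v,B^*)(0,\cdot)=(\bar\rho+\rho_0,v_0,B_0^*)$, the data of (\ref{inidata}). From $U^{\mc}_{\mathfrak p}\in C([0,T];\cH^s)$ one reads $\rho,v\in C([0,T];\cH^s)$ and $B^*=\mathscr Q B_0^*+\nabla\times A^*\in C([0,T];\cH^{s-1})$ (using $B_0^*\in\cH^{s-1}$ from (\ref{reginidata})). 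For $B$ I would observe that $w:=A^*-A$ solves $\mathscr L^{\mc}_2 w=\nabla\times B^*$, which is solenoidal, hence $\nabla\cdot(\rho w)=0$ and
\[
-\Delta w=\nabla\times B^*-\rho\,w+\nabla\bigl((\nabla\rho\cdot w)/\rho\bigr);
\]
starting from $w\in L^2$ (Lemma~\ref{zeroorder}), a finite elliptic bootstrap on this equation, using the Sobolev product estimates available for $s>5/2$, upgrades $w$ to $\cH^s$, whence $\nabla\times B=\rho\,w\in\cH^s$; combined with $\nabla\cdot B=\nabla\cdot B_0^*\in\cH^s$, the div--curl system of Subsection~\ref{div-curlsystem} gives $B\in C([0,T];\cH^{s+1})$, i.e.\ (\ref{sensesmooth}). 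I expect the main obstacle to be exactly this last point: verifying that the plain $L^2$--boundedness of $(\mathscr L^{\mc}_2)^{-1}$ (Lemma~\ref{zeroorder}), rather than the costlier Proposition~\ref{ellllli} which would force $s>7/2$, is enough to propagate the full $\cH^s$ regularity of $w$, and hence of $\nabla\times B$, down to $s>5/2$.
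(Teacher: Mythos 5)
Your proposal is correct and, in its algebraic part, coincides with the paper's own argument: the check of the initial data via $B^*(0,\cdot)=\mathscr Q B_0^*+\mathscr P B_0^*=B_0^*$, the derivation of (\ref{lienAA*inireprem}) from (\ref{lienAA*inire}) and then of (\ref{lienBB*ini}) by taking a curl, and the identification of the three lines of (\ref{syssimplicompot}) with those of (\ref{compressible}) after merging the $\mathscr Q B_0^*$--sources through $\nabla\times A^*+\mathscr Q B_0^*=B^*$ and applying $\nabla\times$ to the $A^*$--equation (which kills $\nabla e$). Where you genuinely diverge is the regularity of $B$: the paper settles it in one line by asserting that $A=(\mathscr L^{\mc}_2)^{-1}(\rho\,A^*)\in\cH^{s+2}$, a claim not covered by Lemma \ref{zeroorder} and not deducible from Proposition \ref{ellllli} in this setting (that proposition requires $s>7/2$ and solenoidal data; moreover only $\mathscr P A$ can gain two derivatives, since $\mathscr Q A$ is governed by the zero-order relation $\mathscr Q(\rho A)=\mathscr Q(\rho A^*)$). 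Your bootstrap on $w=A^*-A$, based on $\mathscr L^{\mc}_2\,w=\nabla\times B^*$, the resulting constraint $\nabla\cdot(\rho w)=0$, and the rewriting $-\Delta w=\nabla\times B^*-\rho w+\nabla\bigl((\nabla\rho\cdot w)/\rho\bigr)$, works at the stated threshold $s>5/2$ (the product estimates only use $\rho-\bar\rho\in\cH^{s}$ with $s-1>3/2$, together with the positive lower bound on $\rho$ that you rightly propagate from (\ref{awayvacuum})), and it yields exactly what is needed, namely $\nabla\times B=\rho w\in\cH^{s}$; combined with $\nabla\cdot B=\nabla\cdot B_0^*\in\cH^{s}$ and the div--curl estimates of Subsection \ref{div-curlsystem}, this gives $B\in C([0,T];\cH^{s+1})$ as in (\ref{sensesmooth}). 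In short, your route buys a complete justification of the regularity step at $s>5/2$, where the paper's shortcut, read literally, overstates the smoothness of the full potential $A$; everything else is the same bookkeeping.
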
 

\noindent Note that the solution to (\ref{syssimplicompot}) is no more subjected to $ \nabla \cdot v = 0 $, but we have still 
$ \nabla \cdot A^* = 0 $. The part $ \mathscr Q A $ is not involved at the level of (\ref{defineBB*autre}), though it is specified 
when solving (\ref{lienAA*inire}). In view of (\ref{lienAA*inireprem}), in general, we do not have $ \nabla \cdot A = 0 $. 

\begin{proof} By construction, we have $ B^* (0,\cdot) = B^*_0 $, and therefore $ (\rho,v,B^*)(0,\cdot) = (\rho_0,v_0,B^*_0) $
as required. On the other hand, it is clear that $ (\rho,v,B^*) $ is as indicated in (\ref{sensesmooth}). Let us consider $ B $. Since
$ \rho \, A^* \in \cH^s $, from (\ref{lienAA*inire}), we get that $ A \in \cH^{s+2} $. Then, from (\ref{defineBB*autre}), we can deduce that 
$ \nabla \times B \in \cH^s $, while by assumption $ \nabla \cdot B = \nabla \cdot B_0^* \in \cH^s $. Thus, we find that $ B \in \cH^{s+1} $
as claimed at the level of (\ref{sensesmooth}).

\smallskip

\noindent From (\ref{defineBB*autre}), we find that $ B^* -B = \nabla \times A^* - \nabla \times A $. Then, by applying the curl operator 
to (\ref{lienAA*inireprem}), we obtain (\ref{lienBB*ini}).

\smallskip

\noindent The equation on $ \rho $ is unchanged. In view of (\ref{lienAA*inireprem}) and (\ref{defineBB*autre}), the equation on $ v $ 
inside (\ref{syssimplicompot}) is just a rephrasing of the equation on $ v $ inherited from (\ref{compressible}). This also applies to the 
last equation of (\ref{syssimplicompot}) after applying the curl operator to it.
\end{proof} 

\noindent The equations inside (\ref{syssimplicompot}) bear some similarity to symmetric hyperbolic-parabolic systems which can be put 
in a normal form in the sense of Kawashima-Shizuta \cite{KS}. To see why, we have to check that the conditions enumerated in Section 3 
of \cite{KS} do apply (at least formally). To match with the notations of \cite{KS}, define $ v_I := {}^t (\rho, \mathscr P v , \mathscr P A^* ) $ 
and $ v_{II} := \mathscr Q v $. This repartition gives rise to
\begin{equation}\label{Kawashima} 
\left \{ \begin{array}{l}
\part_t v_I + A_I (v_I, v_{II} , D_x) v_I = \bar g_I (v_I,v_{II}, D_x v_{II}) \, , \\ 
\part_t v_{II} = \nu \ \nabla (\nabla \cdot  v_{II} ) + \bar g_{II} (v_I,v_{II}, D_x v_I, D_x v_{II} ) \, ,
\end{array} \right. 
\end{equation}
where $ \bar g_I = \bar g_I^1 (v_I,v_{II}, D_x v_{II}) + \bar g_I^2 (v_I,v_{II}) $ and  
$$ A_I := \text {\footnotesize 
$ \displaystyle 
\left( \begin{array}{ccc}
v \cdot \nabla & 0 & 0 \\
0 & \mathscr P \, (v \cdot \nabla)\, \mathscr P & \mathscr P \, \mathscr T^*_{A^*-A} \, \mathscr P \\
0 & \mathscr P \, \mathscr T^*_{A^*-A} \, \mathscr P & \mathscr P \, \mathscr T^*_{v-d \, (A^*-A)} \, \mathscr P
\end{array} \right) $} \, , \qquad \bar g_I^1 := \text {\footnotesize 
$ \displaystyle 
\left( \begin{array}{c}
- \rho \, \nabla \cdot v_{II} \\
- \, (v\cdot \nabla) \cdot v_{II}  \\
0
\end{array} \right) .$} $$
Recall that, given a smooth vector field $ C $, the operator $ \mathscr T_C $ is defined as in (\ref{decompoooo}) with adjoint
$ \mathscr T_C^* = - C \times ( \nabla \times \ ) $. Consider the action of $ \mathscr P \, \mathscr T^*_C \, \mathscr P $ where
the presence of $ \mathscr P $ eliminates the non symmetric one order terms (Remark \ref{yeti}). As a consequence, $ \mathscr P \, 
\mathscr T^*_C \, \mathscr P $ is (modulo zero order terms) a skew-adjoint operator with principal symbol $ i \, (C \cdot \xi) \ P(\xi) $. 
As required, $ A_I $ is skew-adjoint, the parabolic part on $ v_{II} $ is non-negative
definite, while $ \bar g_I $ depends only on $ D_x v_{II} $. This is where the role of the bulk fluid viscosity can be understood.
It is to compensate the losses of derivatives in the first equation. This idea can serve as a guide for obtaining the well-posedness. 
There are however some specific issues among which the presence of the pseudo-differential action (\ref{lienAA*inire}) 
to recover the coefficient $ A $ from $ A^* $. For the sake of completeness, we give a direct proof in the next paragraph.


\subsubsection{$ L^2 $-energy estimates} \label{paracomp3} The energy $ \cE^{\mc} $ of (\ref{compressibledecene}) is a decreasing 
quantity. Assuming that $ \rho $ remains positive (recall that the internal energy $ U $ is positive as soon as $ \rho > 0 $), 
this provides with a priori estimates on $ v $, $ \rho $ and $ \nabla \times B $. Now, from (\ref{lienAA*inireprem}) and 
(\ref{lienAA*inire}), we can deduce that
\begin{equation}\label{lienAA*inirepremtoB} 
\rho^{-1} \ \nabla \times B = ( \mathscr L^{\mc}_2)^{-1} \, \nabla \times (\nabla \times A^*) \, . 
\end{equation}
Then, by combining Proposition \ref{ellllli} and Lemma \ref{Invariant quantities}, we obtain as in the incompressible case some 
(high-frequency) $ L^2 $-bounds on $ v $ and $ A^* $. But again, this is not sufficient. We would like to have extra controls on 
$ \rho $ (other than those furnished by the integral of $ \rho \, U$) and especially stability estimates.  For these reasons, we look 
at the linearized equations which are associated with (\ref{syssimplicompot}). We think in terms of the unknowns $ (q,v, A^*) $, 
and therefore in terms of $ \dot U^{\mc}_{\mathfrak p} := ( \dot q,\dot v , \dot A^*) $. When doing this, this time, the term which is at top right of 
the second line of (\ref{syssimplicompot}) leads to
\begin{multline*}
\begin{array}{rl}  
- \, (A^*-A) \times (\nabla \times \dot A^*) + \, (\nabla \times A^* ) \times \dot A^* \! \! & - \,  (\nabla \times A^*) \times ( \dot{\mathscr L}^{\mc}_2)^{-1} \, 
(\rho \, A^*) \smallskip \\
\ & - \, (\nabla \times A^*) \times ( {\mathscr L}^{\mc}_2)^{-1} \, (\dot \rho \, A^* + \rho \, \dot A^*) \, , 
  \end{array}
\end{multline*}
where the dot on $ \mathscr L $ is needed to keep track of the dependence of $ ( {\mathscr L}^{\mc}_2)^{-1} $ on $ \rho $. Given a 
Lipschitz field $ U^{\mc}_{\mathfrak p} $, the three terms appearing in the right hand side are clearly bounded by the $ L^2 $-norm of $ \dot U^{\mc}_{\mathfrak p} $, 
and they are therefore compatible with the local $ L^2 $-stability. To simplify the presentation, they are not mentioned. Modulo source 
terms (which are ignored), we can focus on
\begin{equation}\label{syssimplicompotlinearized}  
\left \lbrace \begin{array}{l}
\part_t \dot q + v \cdot \nabla \dot q + a(q) \ \nabla \cdot \dot v = 0 \, , \medskip \\
  \displaystyle \part_t \dot v + (v\cdot \nabla ) \dot v + a(q) \ \nabla \dot q - (A^*-A) \times (\nabla \times \dot A^*) \smallskip\\
  \qquad \qquad \qquad  \ + \, \nabla \bigl( ( A^*-  A ) \cdot ( \dot A^*-  \dot A )\bigr)  = \nu \ \nabla (\nabla \cdot \dot v)  \, ,  \medskip \\
\displaystyle \part_t \dot A^* - \bigl(v - d \, (A^*-A) \bigr) \times ( \nabla \times \dot A^*) - (A^*-A) \times (\nabla \times \dot v) + 
\nabla \dot e = 0 \, .
\end{array} \right. 
\end{equation}
At the initial time $ t = 0 $, we impose
\begin{equation}\label{inidatasimplpotdotlinearized}  
\dot U^{\mc}_{\mathfrak p} (0,\cdot) = \dot U^{\mc}_{\mathfrak p 0} = (\dot q_0,\dot v_0, \dot A^*_0) \in L^2 (\RR^3;\RR) \times L^2 (\RR^3;\RR^3)^2 \, .
\end{equation}

\begin{lem} \label{L2inestimatecomp} [$L^2 $-energy estimates for the linearized incompressible  potential equations] Let $ T > 0 $.  Assume 
that  $ U^{\mc}_{\mathfrak p} = (\rho,v,A^*) $ is such that $ U^{\mc}_{\mathfrak p} \in C([0,T];H^s) $ for some $ s> 5/2 $. Then, the Cauchy problem built with 
(\ref{lienAA*inire})-(\ref{syssimplicompotlinearized}) and with initial data (\ref{inidatasimplpotdotlinearized}) has a solution on 
$ [0,T] $. Moreover, we can find a constant $ C $ depending only on the $ C([0,T];\cH^s) $-norm of $ U^{\mc}_{\mathfrak p} $ such that
\begin{equation}\label{0-energy estimatepotcomp}  
\parallel \dot U^{\mc}_{\mathfrak p} (t,\cdot) \parallel_{L^2} \leq \parallel \dot U^{\mc}_{\mathfrak p 0} \parallel_{L^2} \, e^{C \, t} \, , \qquad \forall \, t \in [0,T]  \, .
\end{equation}
\end{lem}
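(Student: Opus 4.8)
The plan is to mimic the structure of the incompressible proof (Lemma \ref{L2inestimatepotre}), but now keeping track of the density variable $\dot q$ and of the pseudo-differential coefficient $A = (\mathscr L^{\mc}_2)^{-1}(\rho\,A^*)$. Existence of a solution on $[0,T]$ to the linear Cauchy problem (\ref{lienAA*inire})-(\ref{syssimplicompotlinearized})-(\ref{inidatasimplpotdotlinearized}) follows from the standard theory of linear symmetric hyperbolic-parabolic systems: once the $L^2$ (and higher order) a priori bounds below are in hand, a Galerkin or viscous-regularization argument, combined with the positivity of the viscous term $\nu\,\nabla(\nabla\cdot\dot v)$ (recall $\nu>0$), produces the solution; I would only sketch this and concentrate on the energy identity. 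Throughout I use the hypothesis (\ref{ass-coef}), namely $\parallel U^{\mc}_{\mathfrak p}\parallel_{L^\infty}+\parallel\nabla U^{\mc}_{\mathfrak p}\parallel_{L^\infty}\lesssim M:=\parallel U^{\mc}_{\mathfrak p}\parallel_{C([0,T];\cH^s)}$, valid since $s>5/2$, and the fact that $a(q)$, $a'(q)$ are likewise bounded together with one derivative in $L^\infty$ by a function of $M$ (using $\rho$ bounded away from zero, which holds at least on a short interval by the transport equation for $\rho$).

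First I would multiply the three equations of (\ref{syssimplicompotlinearized}) by $\dot q$, $\dot v$, $\dot A^*$ respectively and integrate over $\RR^3$. The first-order transport parts $v\cdot\nabla$ contribute, after integration by parts, only the harmless term $\tfrac12(\nabla\cdot v)\,(\cdot)$, bounded by $M\parallel\dot U^{\mc}_{\mathfrak p}\parallel_{L^2}^2$ since $\nabla\cdot v\in L^\infty$. The symmetric coupling between $\dot q$ and $\nabla\cdot\dot v$ through the coefficient $a(q)$ is the standard compressible-Euler pairing: the terms $\int a(q)(\nabla\cdot\dot v)\dot q$ and $\int a(q)(\nabla\dot q)\cdot\dot v$ combine after one integration by parts into $-\int(\nabla a(q)\cdot\dot v)\dot q$, again $O(M\parallel\dot U^{\mc}_{\mathfrak p}\parallel_{L^2}^2)$. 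The viscous term gives $-\nu\int|\nabla(\nabla\cdot\dot v)|^2\le 0$, hence can simply be dropped to the good side. The gradient multipliers $\nabla\dot e$ and $\nabla\big((A^*-A)\cdot(\dot A^*-\dot A)\big)$ are killed against $\dot A^*$ and $\dot v$ respectively, because $\nabla\cdot\dot A^*=0$ (equation (\ref{ehhhhoui}) linearized) and $\nabla\cdot\dot v$ pairs with a gradient to produce a perfect derivative — more precisely $\int\dot v\cdot\nabla(\cdots)=-\int(\nabla\cdot\dot v)(\cdots)$, and $\parallel\dot A-\dot A^*\,\text{part}\parallel_{L^2}\lesssim\parallel\dot A^*\parallel_{L^2}$ plus $O(M)\parallel\dot q\parallel_{L^2}$ by Lemma \ref{zeroorder} applied to $(\mathscr L^{\mc}_2)^{-1}$ and $(\dot{\mathscr L}^{\mc}_2)^{-1}$, so this is absorbed (if $\nabla\cdot\dot v$ is not $L^2$ a priori one first Young-splits against the viscous term, exactly as in the proof of Lemma \ref{L2inestimatebis}).

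The heart of the estimate is the magnetic coupling $-(A^*-A)\times(\nabla\times\dot A^*)$ appearing in the $\dot v$-equation, together with the two terms $-\big(v-d(A^*-A)\big)\times(\nabla\times\dot A^*)$ and $-(A^*-A)\times(\nabla\times\dot v)$ in the $\dot A^*$-equation. Following Remark \ref{yeti}, I write each such term as $\mathscr T_C^*\dot A^*=-C\times(\nabla\times\dot A^*)$ with $C\in\{A^*-A,\,v-d(A^*-A)\}$, pair against $\dot v$ or $\dot A^*$, and then \emph{pass to the adjoint} to replace $\mathscr T_C^*$ by $\mathscr T_C$; the cross-coupling terms involving $A^*-A$ cancel in pairs exactly as in the incompressible computation, leaving only $\langle\mathscr T_v\dot A^*,\dot A^*\rangle$ and the zero-order pieces of the decomposition (\ref{definitionufbis}) of each $\mathscr T_C$. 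The first-order part $\tilde{\mathscr T}_1\dot A^*=-(C\cdot\nabla)\dot A^*$ is skew-adjoint, so $\langle\tilde{\mathscr T}_1\dot A^*,\dot A^*\rangle$ equals $\tfrac12\int(\nabla\cdot C)|\dot A^*|^2$, which is $O(M\parallel\dot A^*\parallel_{L^2}^2)$ — here I need $\nabla\cdot C$ bounded, which holds since $\nabla\cdot v\in L^\infty$ and $\nabla\cdot(A^*-A)=-\nabla\cdot A$ with $A=(\mathscr L^{\mc}_2)^{-1}(\rho A^*)\in\cH^{s+2}$ by Lemma \ref{zeroorder} sharpened by Proposition \ref{ellllli} (or just Lemma \ref{zeroorder} plus the $L^\infty$-bound on $A^*$, which already gives $\nabla\cdot A\in L^\infty$). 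The zero-order pieces $\tilde{\mathscr T}_0\dot A^*$ and the remaining $C$-independent contributions are, by (\ref{ass-coef}) and the $L^2$-boundedness of $(\mathscr L^{\mc}_2)^{-1}$, all controlled by a function of $M$ times $\parallel\dot U^{\mc}_{\mathfrak p}\parallel_{L^2}^2$.

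Collecting everything, the energy identity takes the form
\begin{equation*}
\frac{d}{dt}\bigg(\int_{\RR^3}|\dot U^{\mc}_{\mathfrak p}(t,\cdot)|^2\,dx\bigg)+2\nu\int_{\RR^3}|\nabla(\nabla\cdot\dot v)|^2\,dx\le C(M)\int_{\RR^3}|\dot U^{\mc}_{\mathfrak p}(t,\cdot)|^2\,dx,
\end{equation*}
and Gr\"onwall's inequality yields (\ref{0-energy estimatepotcomp}). The main obstacle I anticipate is exactly the $\dot q$-contribution entering $A$: because $\dot A$ depends on $\dot q$ through $(\mathscr L^{\mc}_2)^{-1}(\dot\rho\,A^*)$, the magnetic-coupling terms are no longer purely quadratic in $(\dot v,\dot A^*)$ and one must check that every place where a $\nabla\times$ hits $\dot A$ (potentially a derivative loss on $\dot q$) is in fact harmless — but $(\mathscr L^{\mc}_2)^{-1}\circ(\nabla\times)\circ(\nabla\times)$ is order $0$ by Lemma \ref{zeroorder}/Proposition \ref{ellllli}, so $\nabla\times\dot A$ costs no derivative on $\dot q$; the only genuine first-order object is $\nabla\times\dot A^*$, which is tamed by the skew-adjoint mechanism above. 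The secondary subtlety, as in Lemma \ref{L2inestimatebis}, is that $\nabla\cdot\dot v$ need not be in $L^2$ without the viscous term, so the Young inequality against $\nu\int|\nabla(\nabla\cdot\dot v)|^2$ has to be invoked whenever $\nabla\cdot\dot v$ is paired with an order-$\le 1$ quantity; this is routine given $\nu>0$.
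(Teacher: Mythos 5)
Your proposal follows essentially the same route as the paper: multiply the three equations by $\dot q$, $\dot v$, $\dot A^*$, treat the transport and $a(q)$-symmetric couplings as harmless, pass to the adjoint so that $\mathscr T_C^*$ becomes $\mathscr T_C$ (Remark \ref{yeti}), observe that the only genuinely new contribution compared with the incompressible case comes from $\nabla \cdot \dot v \neq 0$, absorb it with the bulk viscosity via Young, and conclude by Gr\"onwall. One correction: at this $L^2$ level the viscous term tested against $\dot v$ yields the dissipation $\nu \parallel \nabla \cdot \dot v \parallel_{L^2}^2$, not $\nu \parallel \nabla (\nabla \cdot \dot v) \parallel_{L^2}^2$ as written in your final energy identity (that stronger quantity only appears in the vorticity formulation of Lemma \ref{L2inestimatebis}, where $\nabla \cdot v$ is itself an unknown); this weaker dissipation is exactly what the paper uses and it suffices, because the problematic leftover term $\langle (\nabla \cdot \dot v) \, (A^*-A), \dot A^* \rangle$ pairs $\nabla \cdot \dot v$ only with zero-order quantities bounded in $L^2$ by $M \parallel \dot U^{\mc}_{\mathfrak p} \parallel_{L^2}$.
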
 

\noindent Any $ \cH^s $-solution to the initial value problem (\ref{lienAA*inire})-(\ref{syssimplicompot})-(\ref{inidatasimplpotnodotcomp}) 
leads to a solution to (\ref{syssimplicompotlinearized})-(\ref{inidatasimplpotdotlinearized}) with initial data $ \dot U^{\mc}_{\mathfrak p 0} = U^{\mc}_{\mathfrak p 0} $. 
As a consequence, the proof of Lemma \ref{L2inestimatecomp} gives another access to some $ L^2 $-bound, namely
$$ \parallel U^{\mc}_{\mathfrak p} (t,\cdot) \parallel_{L^2} \leq \parallel U^{\mc}_{\mathfrak p 0} \parallel_{L^2} \, e^{C \, t} \, , \qquad \forall \, t \in [0,T]  \, . $$

\begin{proof} We multiply the first, second and third equation of (\ref{syssimplicompotlinearized}) respectively by $ \dot q $, $ \dot v $ 
and $ \dot A^* $; we integrate with respect to the variable $ x $; and then we force everywhere the emergence of $ \mathscr T_C $
by passing to the adjoint. This furnishes
 $$ \begin{array}{l} 
\displaystyle \frac{1}{2} \ \frac{d}{dt}  \bigg( \int_{\RR^3} \vert \dot U^{\mc}_{\mathfrak p} (t,\cdot)  \vert^2 \ dx  \bigg) + \nu \int_{\RR^3} \vert ( \nabla 
\cdot \dot v) (t,\cdot)  \vert^2 \ dx \leq C \int_{\RR^3} \vert \dot U^{\mc}_{\mathfrak p} (t,\cdot)  \vert^2 \ dx \smallskip \\
\qquad \qquad \qquad \qquad \qquad \quad -  \, \langle  \mathscr T_{A^*-A} \dot v , \dot A^*\rangle 
 - \langle  \mathscr T_v \dot A^*, \dot A^*\rangle +\langle  \mathscr T_{A^*-A} \dot A^* , d \, \dot A^* - \dot v \rangle \, . 
 \end{array} $$
 Knowing (\ref{ehhhhoui}), the situation is exactly as in the incompressible case, except that the divergence of $ \dot v $ is no more zero. 
 The only new term which could be problematic is issued from the first contribution in the second line. It is unavoidable in our procedure. However, it is such that 
 $$ \vert \langle (\nabla \cdot \dot v) \, (A^*-A) , \dot A^* \rangle \vert \leq \frac{\nu}{2} \, \parallel \nabla \cdot \dot v \parallel_{L^2}^2 + 
 \frac{C}{\nu} \, \parallel \dot U^{\mc}_{\mathfrak p} \parallel_{L^2}^2\, , $$
This is where the bulk (fluid) viscosity is indispensable. It serves to absorb the above loss of derivatives related to $ \nabla \cdot \dot v $. 
By Gr\"onwall's inequality, we recover (\ref{0-energy estimatepotcomp}).
\end{proof}

\noindent The comments in Paragraph \ref{paraideal2} are still appropriate. Indeed, the compressible vorticity formulation is a derived 
version of the compressible potential formulation. As such, the work of Subsection \ref{lwellposedin} can serve to confirm that 
$ \cH^1 $-estimates for the compressible potential formulation are available. This remark concludes the proof of Theorem \ref{theoprin}.


\section{Inertial wave phenomena}\label{Extended dispersion relations} 
To better grasp the role of both $ d_i $ and $ d_e $, in this section, we work with the spacetime variables of origin, those of (\ref{compressibledeb}). 
For $ 0 \leq d_e \lesssim d_i \ll 1 $ and frequencies $ \vert \xi \vert \ll d_i^{-1} $, XMHD like MHD involves principally Alv\'en and magnetosonic 
waves. The focus here is on what happens at higher frequencies, when $ \vert \xi \vert \sim d_i^{-1} $ or $ \vert \xi \vert \sim d_e^{-1} $,
while usual MHD waves may be relegated to the back burner. The emphasis is on the emergence and propagation of inertial waves. To 
simplify, we address this issue in the incompressible context, with
\begin{equation}\label{syssimplide}  
\left \lbrace \begin{array}{l}
  \displaystyle \part_t {\text v} + ({\text v} \cdot \nabla ) {\text v} + \nabla {\text p} + {\text B}^* \times (\nabla \times {\text B}) = 0 \, , \medskip \\
\displaystyle \part_t {\text B}^* + \nabla \times \bigl( {\text B}^* \times ({\text v} - d_i \,\nabla \times {\text B}) \bigr) + d_e^2 \ \nabla \times 
\bigl( (\nabla \times {\text v}) \times (\nabla \times {\text B} ) \bigr) = 0 \, ,
\end{array} \right. 
\end{equation}
together with (\ref{divfreeini}) and 
\begin{equation}\label{lienBB*2de}  
{\text B} = (\id- d_e^2 \, \Delta)^{-1} {\text B}^* \, .
\end{equation}
Our discussion is guided by the selection of different  wave configurations, aimed at revealing various facets of the analysis. 
Each time, we follow the same guidelines. First, we exhibit particular solutions to (\ref{divfreeini})-(\ref{syssimplide})-(\ref{lienBB*2de}). 
Secondly,  we derive the corresponding linearized equations (this is an opportunity to come back and complete some aspects of the 
preceding analysis). Then, we study the inertial dispersion 
relations thus generated. 

 \smallskip
 
\noindent This strategy is implemented in different situations which become somewhat more and more sophisticated. We consider 
successively: constant solutions (Subsection \ref{pannel1}), Beltrami fields (Subsection \ref{pannel2}), configurations with null points
(Subsection \ref{pannel3}), a two dimensional framework (Subsection \ref{pannel4}) and special moving solutions 
(Subsection \ref{pannel5}). 


\subsection{Constant solutions}\label{pannel1} 
Of course, constant vector fields like $ (\bar {\text v}, \bar {\text B}^*) \in \RR^3 \times \RR^3 $ give rise to solutions. The associated linearized equations
are readily identifiable
\begin{equation}\label{syssimplilinconstant}  
\left \lbrace \begin{array}{ll}
  \displaystyle \part_t \dot v + (\bar {\text v} \cdot \nabla ) \dot v + \nabla \dot p + \bar {\text B}^* \times (\nabla \times \dot B) = 0 \, ,  & \quad \nabla \cdot 
  \dot v = 0 \, , \medskip \\
\displaystyle \part_t \dot B^* + (\bar {\text v} \cdot \nabla ) \dot B^* - (\bar {\text B}^* \cdot \nabla ) ( \dot v - d_i \, \nabla \times \dot B ) = 0 \, , & \quad 
\nabla \cdot \dot B^* = 0 \, , 
\end{array} \right. 
\end{equation}
together with 
\begin{equation}\label{lienBB*2linconstant}  
\dot B = (\id- d_e^2 \, \Delta)^{-1} \dot B^* .
\end{equation}
The linear system (\ref{syssimplilinconstant}) is not symmetric (and not directly symmetrizable), confirming that the unknowns $ \dot v $ and 
$ \dot B^* $ are not suitable. Following Subsection \ref{transbis}, we can introduce the weighted vorticity $ \dot w := d_e \, \nabla \times \dot v $ 
to get
\begin{equation}\label{syssimplilinconstantvort}  
\quad \left \lbrace \begin{array}{ll}
  \displaystyle \part_t \dot w + (\bar {\text v} \cdot \nabla ) \dot w =  d_e^{-1} \, (\bar {\text B}^* \cdot d_e \, \nabla) \bigl( d_e \, \nabla \times (\id-d_e^2 \, 
  \Delta)^{-1} \dot B^* \bigr) \, , \medskip \\
\displaystyle \part_t \dot B^* + (\bar {\text v} \cdot \nabla ) \dot B^* = d_e^{-1} \, (\bar {\text B}^* \cdot d_e \, \nabla ) \bigl( \dot v - d \, d_e \, \nabla \times 
(\id- d_e^2 \, \Delta)^{-1} \dot B^* \bigr) \, .
\end{array} \right. 
\end{equation}
The derivatives of $ \dot v $ (weighted by $ d_e $) can be deduced from $ \dot w $ as indicated in Lemma \ref{elldpourw}. Observe that the 
operators which are in factor of $ d_e^{-1} $ in the right hand side are uniformly (when $ d_e \rightarrow 0 $) bounded in $ L^2 $. Thus:
\begin{itemize}
\item [-] For $ \bar {\text B}^* = O(d_e) $ or if the regime is weakly nonlinear as in (\ref{rescaledversion}), the source terms are uniformly bounded
on any finite time interval. This is the framework of the present paper.
\item [-] For $ \bar {\text B}^* = O(1) $, the $ L^2 $-norm of $ (\dot w, \dot B^*) $ may increase at a rate of $ d_e^{-1} $. This depends on the structure 
(antisymmetric or not) of the source term. As already mentioned, the corresponding effects are connected to singularity formation \cite{ChaeW,JO} or 
magnetic reconnection \cite{FP,GTAM}. These difficulties are not addressed here.
\end{itemize}

\noindent In other words, at very high frequencies $ \vert \xi \vert \gtrsim d_e^{-1} $, due to the ellipticity induced by the constitutive relation, 
all standard hyperbolic contributions (managing usually Alfv\'en and magnetosonic waves) act in the right hand side as zero order terms. 
If $ \bar {\text B}^* = O(d_e) $, they remain under control. But, for $ \bar {\text B}^* = O(1) $, they could result (when $ d_e \rightarrow 0 $) 
in a very rapid amplification of the $ L^2 $-norm.

\smallskip

\noindent The linear system (\ref{syssimplilinconstantvort}) is well-posed in $ L^2 $. However, its hyperbolic structure (the left hand side) is completely reduced,
without any influence of $ d_e $ or $ d_i $. We just find two decoupled transport equations at the velocity $ \bar v $. The constant case is a 
point of entry that does not allow to catch rich phenomena. Still, it is illustrative of the role of source terms in the inertial regime.


\subsection{Beltrami fields}\label{pannel2} Select some angular wave vector $ {\bf k} \in \RR^3 $ whose angular wavenumber 
$ k := \vert {\bf k} \vert $ is an integer ($ k \in \mathbb N $), as well as some vector $ Z_{\bf k} \in \RR^3 $ which is such that 
$ {\bf k} \cdot Z_{\bf k} = 0 $. With the help of $ {\bf k} $ and  $ Z_{\bf k} $, we can construct the oscillatory wave
$$ \cZ_{\bf k}  (x) := Z_{\bf k} \ \cos \, ({\bf k} \cdot x) + k^{-1} \ ( Z_{\bf k} \times {\bf k} ) \ \sin \, ({\bf k} \cdot x) \, . $$
This furnishes an eigenfunction of the curl operator with eigenvalue $ k $, which is called a Beltrami field. From
$$ \nabla \times \cZ_{\bf k} = k \, \cZ_{\bf k} \, , \qquad \nabla \cdot \cZ_{\bf k} = 0 \, , \qquad 2 \ (\cZ_{\bf k} \cdot \nabla) \cZ_{\bf k}  
= \nabla \vert \cZ_{\bf k}  \vert^2 \, , \qquad \Delta \cZ_{\bf k} = - k^2 \, \cZ_{\bf k} \, , $$
we can deduce that $ ({\text v},{\text B}^*) = (\cZ_{\bf k} ,\cZ_{\bf k} ) $ is a stationary solution to (\ref{divfreeini})-(\ref{syssimplide})-(\ref{lienBB*2de})
with pressure $ p = - \vert \cZ_{\bf k} \vert^2 / 2 $ and $ B = (1+ d_e^2 \, k^2)^{-1} \, \cZ_{\bf k} $. After some calculations, always with 
the weighted vorticity $ \dot w := d_e \, \nabla \times \dot v $, we find that
\begin{equation}\label{syssimplilinconstantvortb}  
\quad \left \lbrace \begin{array}{ll}
  \displaystyle \part_t \dot w + (\cZ_{\bf k} \cdot \nabla ) \dot w + d_e \, k \ (\cZ_{\bf k} \cdot \nabla ) \dot B^* = \dot {\mathscr S}^{\mi w}_{\mv 0} 
  \, (\dot w,\dot B^*) \, , \medskip \\
\displaystyle \part_t \dot B^* + (1-d_i \, k) \ (\cZ_{\bf k} \cdot \nabla ) \dot B^* + d_e \, k \ (\cZ_{\bf k} \cdot \nabla ) \dot w = \dot 
{\mathscr S}^{\mi B^*}_{\mv 0} \, (\dot w,\dot B^*) \, .
\end{array} \right. 
\end{equation}
The operators $ \dot {\mathscr S}^{\mi \star}_{\mv 0} $ are (as suggested by the notation) of order zero. They depend on $ d_e $
and $ d_i $, and they show properties similar to those identified in Subsection \ref{pannel1}. For $ d_i = 0 $, the two quantities 
$ \dot w \pm \dot B^* $ satisfy two transport equations (coupled by source terms). These inertial waves travel along the same 
characteristics, those generated by $ \cZ_{\bf k} $, but with different speeds of propagation (due to the factor $ 1\pm d_e \, k $ 
in front of $ \cZ_{\bf k} \cdot \nabla $).


\subsection{Null point configurations}\label{pannel3} The locations where the magnetic field vanishes are called
null points. Prototypes can (locally) take the form
\begin{equation}\label{prototo}   
{\text B}_\alpha^{f*} := {}^t (y,\alpha \, x,0) \, , \qquad {\text B}_\alpha^{s*} := {}^t \bigl(x,\alpha \, y,- (\alpha+1) \, z \bigr) \, , \qquad \alpha \in \RR \, . 
\end{equation}
The expression $ ({\text v},{\text B}^*) $ with $ ({\text v},{\text B}^*) = (0,{\text B}_\alpha^{f*}) $ or $ ({\text v},{\text B}^*) = 
(0,{\text B}_\alpha^{s*}) $ is a stationary solution satisfying $ {\text B}= {\text B}^* $. 

\smallskip

\noindent $ \bullet $ \underline{The} \underline{case} \underline{of} $ {\text B}_\alpha^{f*} $. First compute
$$ \nabla \times {\text B}_\alpha^{f*} = \text {\footnotesize 
$ \displaystyle 
\left( \begin{array}{c}
0 \\
0 \\
\alpha -1
\end{array} \right) $} \, , \qquad  {\text B}_\alpha^{f*} \times ( \nabla \times {\text B}_\alpha^{f*} ) = (\alpha -1) \, \text {\footnotesize 
$ \displaystyle 
\left( \begin{array}{c}
\alpha \, x \\
-y \\
0
\end{array} \right) $} = \frac{\alpha -1}{2} \ \nabla (\alpha \, x^2-y^2) \, . $$
It follows that
\begin{equation}\label{syssimplilinconstantvortbc}  
\quad \left \lbrace \begin{array}{ll}
  \displaystyle \part_t \dot w + d_e \, (\alpha-1) \ \partial_z \dot B^* = \dot {\mathscr S}^{\mi w}_{\mv 0} \, (\dot w,\dot B^*) \, , \medskip \\
\displaystyle \part_t \dot B^* - d_i \, (\alpha-1) \ \partial_z \dot B^* + d_e \, (\alpha-1) \ \partial_z \dot w = \dot {\mathscr S}^{\mi B^*}_{\mv 0} 
\, (\dot w,\dot B^*) \, .
\end{array} \right. 
\end{equation}
The inertial waves move (modulo possibly large source terms) in the vertical direction (the one of the 
stationary current density) at the speeds $ d_i \pm (d_i^2+ 4 \, d_e^2)^{1/2} \, (\alpha -1) / 2 $. In other words, two dimensional null points lend themselves to a 
transport of energy in the  direction orthogonal to the (horizontal) magnetic surfaces. This effect disappears when the perturbation remains 
in the horizontal plane or in the particular case $ \alpha = 1 $ (when the separatrix angle is $ \pi / 2 $).

\smallskip

\noindent $ \bullet $ \underline{The} \underline{case}  $ {\text B}_\alpha^{s*} $. This situation is even simpler since $ \part_t (\dot w , \dot B^* ) = 
\dot {\mathscr S}^{\mi}_{\mv 0} \, (\dot w , \dot B^* ) $.

\medskip

\noindent Large amplitude magnetic fields like in (\ref{prototo}) furnish usually templates in the perspective of reconnection models \cite{Pontin}. 
The problem is to describe what happens near the origin after perturbation. This would require (this is not done here) to measure the impact of 
the source term $ \dot {\mathscr S}^{\mi}_{\mv}  $ which is presumably of size  $ d_e^{-1} $.


\subsection{The two dimensional case}\label{pannel4} We can also seek solutions which do not depend on $ z $ and which involve 
the following form (where $ {\text B} $ and $ {\text B}^* $ are both orthogonal to $ {\text v} $)
 \begin{equation}\label{vopposite}  
 {\text v} = \text {\footnotesize 
$ \displaystyle 
\left( \begin{array}{c}
{\text v}_1(t,x,y) \\
{\text v}_2(t,x,y) \\
0
\end{array} \right) $} , \qquad  {\text B} = \text {\footnotesize 
$ \displaystyle \left( \begin{array}{c}
0 \\
0 \\
{\text b} (t,x,y) 
\end{array} \right) $} , \qquad  {\text B}^* = \text {\footnotesize 
$ \displaystyle \left( \begin{array}{c}
0 \\
0 \\
{\text b}^* (t,x,y) 
\end{array} \right) . $}
\end{equation}
Note that there exist two dimensional solutions of (\ref{syssimplide}) which are more general than (\ref{vopposite}), by including the 
flux and stream functions (see \cite{GTAM}). With (\ref{vopposite}), the equations composing (\ref{syssimplinet}) reduce to the 
following $ 2 \times 2 $ nonlinear system 
 \begin{equation}\label{syssimplinet2D}  
\left \lbrace \begin{array}{l}
\displaystyle \part_t \text{w} + v_1 \ \part_x \text{w} + v_2 \ \part_y \text{w} + d_e \ ( \part_y \text{b} \, \part_x \text{b}^* - \part_x \text{b} \, 
\part_y \text{b}^*) = 0 \, , \medskip \\
\displaystyle \part_t \text{b}^* + (v_1- d_i \, \part_y \text{b}) \ \part_x \text{b}^* + (v_2+ d_i \, \part_x \text{b}) \ \part_y \text{b}^*  + d_e \ (  \part_y 
\text{b} \, \part_x \text{w} - \part_x \text{b} \, \part_y \text{w}) = 0 \, ,
\end{array} \right. 
\end{equation}
together with
\begin{equation}\label{divfree2D}  
\part_x {\text v}_1 + \part_y {\text v}_2 = 0 \, , \qquad  \text{b} = (1- d_e^2 \, \Delta_{x,y} )^{-1} \, \text{b}^* \, .
\end{equation}
For $ d_e = 0 $, we find that $ b=b^* $, and the system (\ref{syssimplinet2D}) reduces to two transport equations
 \begin{equation}\label{syssimplinet2Dreduce}  
\left \lbrace \begin{array}{l}
\displaystyle \part_t \text{w} + v_1 \ \part_x \text{w} + v_2 \ \part_y \text{w} = 0 \, , \medskip \\
\displaystyle \part_t \text{b}^* + v_1 \ \part_x \text{b}^* + v_2 \ \part_y \text{b}^* = 0 \, .
\end{array} \right. 
\end{equation}
The Hall effects (coming from $ d_i $) just disappear (this is quite specific to this configuration). 

\smallskip

\noindent From now on, consider that $ d_e > 0 $. Then, the link between $ \text{b} $ and $ \text{b}^* $ is simplified making apparent the 
gain of two derivatives, and the role of $ \part_x \text{b} $ and 
$ \part_y \text{b} $ as coefficients. Moreover, we get simplifications since the source terms are eliminated. Fix five constants $ (\bar {\text v}_1,
\bar {\text v}_2,\alpha,\beta,\gamma) \in \RR^5 $ such that $-\bar {\text v}_1 \gamma +\bar {\text v}_2 \beta =0$. The expressions 
 \begin{equation}\label{fixdata}   
\bar {\text v} = (\bar {\text v}_1,\bar {\text v}_2) \, , \qquad (\overline{\text{w}}, \overline {\text{b}}^*) := (0, \alpha - \gamma \, x + \beta \, y) \, , \qquad \overline 
{\text{b}} = \overline {\text{b}}^* \, , 
 \end{equation}
give rise to solutions to (\ref{syssimplinet2D}) such that $ \nabla \times \bar {\text B}^* = {}^t (\beta,\gamma,0) $ is constant. The linearized 
equations of (\ref{syssimplinet2D}) along these solutions are given by
 \begin{equation}\label{syssimplinet2Dlinearizedprem}  
 \left \lbrace \begin{array}{l}
 \displaystyle \part_t \dot w + \bar {\text v}_1 \ \part_x \dot w + \bar {\text v}_2 \ \part_y \dot w  + d_e \ ( \beta \, \part_x \dot b^* + \gamma \, 
 \part_y \dot b^* ) - d_e \ ( \beta \, \part_x \dot b + \gamma \, \part_y \dot b )= 0 \, , \medskip\\
\displaystyle \part_t \dot b^* + (\bar {\text v}_1 - d_i \, \beta) \ \part_x \dot b^*  + (\bar {\text v}_2 - d_i \, \gamma) \ \part_y \dot b^* + d_e \ 
( \beta \, \part_x \dot w + \gamma \, \part_y \dot w) \\
\qquad + \, d_i \ ( \beta \, \part_x \dot b + \gamma \, \part_y \dot b ) = 0 \, ,
\end{array} \right. 
\end{equation}
where $ \dot b = (1- d_e^2 \, \Delta_{x,y} )^{-1} \, \dot b^* $. In the quasilinear symmetric presentation (\ref{syssimplinet2Dlinearizedprem}), the 
two contributions $ \beta \, \part_x \dot b^* + \gamma \, \part_y \dot b^* $ and $ \beta \, \part_x \dot w + \gamma \, \part_y \dot w $ establish a 
balance, while $ \part_x \dot b $ and $ \part_y \dot b $ are viewed as source terms (of order zero). Given some angular wave vector 
$ {\bf k} = (k_1,k_2) \in \RR^2 $ with angular wave number $ k := \vert \bf{k} \vert \in \RR_+ $ and given $ \tau \in \CC $, we can seek plane 
wave solutions of the form
 \begin{equation}\label{planewave2D}  
\dot w = \dot w_{\bf k} \ e^{i \, k_1 \, x + i \, k_2 \, y + i \, \tau \, t } , \qquad \dot b^* = \dot b^*_{\bf k} \ e^{i \, k_1 \, x + i \, k_2 \, y  + i \, \tau \, t } \, .
\end{equation}

\begin{rem} \label{compversusap} [Approximate vs complete dispersion relation] Neglecting the influence inside (\ref{syssimplinet2Dlinearizedprem})
of the zero order terms, we find the following two approximate dispersion relations
 \begin{equation}\label{approximate dispersion}  
 \tilde \tau_\pm ({\bf k}) + \bar {\text {\rm v}} \cdot {\bf k} + \frac{1}{2} \ \kappa_\pm \ ( \beta \, k_1 + \gamma \, k_2) = 0 \, , \qquad \kappa_\pm := \frac{1}{2} \ 
\Bigl(d_i \pm \sqrt{d_i^2 + 4 \, d_e^2}  \Bigr) \, ,  
\end{equation}
which are inherited from the symmetric form. As can be expected,  the functions $ \tilde \tau_\pm $ are homogeneous of degree $ 1 $ with respect to $ {\bf k} $.
\end{rem} 

\noindent Now, observe that 
$$ \beta \, \part_x \dot b^* + \gamma \, \part_y \dot b^*  - \beta \, \part_x \dot b - \gamma \, \part_y \dot b = - \, d_e^2 \ \Delta_{x,y} \, (1-d_e^2 \, 
\Delta_{x,y})^{-1} \ ( \beta \, \part_x \dot b^* + \gamma \, \part_y \dot b^* ) \, . $$
Thus, after substitution of (\ref{planewave2D}) inside (\ref{syssimplinet2Dlinearizedprem}), we get the condition $ det \, \bigl( \tau \, \id_{2 \times 2}
+ A({\bf k}) \bigr) = 0 $ where the matrix $ A({\bf k}) $ is defined by 
$$ A({\bf k}) := \bar {\text v} \cdot {\bf k} \ \id_{2 \times 2} + ( \beta \, k_1 + \gamma \, k_2) \ \left( \begin{array}{cc} 
0 & + d_e \, g({\bf k}) \\
d_e & - d_i \, g({\bf k})
\end{array} \right) \, , \qquad g({\bf k}) := \frac{d_e^2 \ k^2}{1+ d_e^2 \, k^2} \, . $$
We find two distinct \underline{real} eigenvalues giving rise to the two complete dispersion relations
 \begin{equation}\label{complete dispersion}   
 \tau_\pm ({\bf k}) +  \bar {\text v} \cdot {\bf k} + \frac{1}{2} \ ( \beta \, k_1 + \gamma \, k_2) \ \Bigl \lbrack d_i \ g ({\bf k}) \pm \sqrt{d_i^2 \, g ({\bf k})^2 
+ 4 \, d_e^2 \, g ({\bf k})} \Bigr \rbrack = 0 \, . 
\end{equation}
This means that the addition of the zero order terms does not destroy the hyperbolic properties. The $ 2 \times 2 $ system (\ref{syssimplinet2Dlinearizedprem})
is hyperbolic, with Fourier multipliers as coefficients:
\begin{itemize}
\item [-] For $ {\bf k} $ orthogonal to $ \nabla \times \bar {\text B}^* $, we just find $ \tilde \tau_\pm ({\bf k}) = \tau_\pm ({\bf k}) = - \bar {\text v} \cdot {\bf k} $.
\item [-] For $ d_e > 0 $, we must incorporate supplementary corrections on both $ \tilde \tau_\pm $ and $ \tau_\pm $ that characterize the 
propagation of inertial waves. Moreover, at the level of $ \tau_\pm $, we observe dispersive effects encoded in the (non constant)  behavior 
of $ g $. On the other hand, for $ k \gg d_e^{-1} $, we get $ g ({\bf k}) \sim 1 $. As a consequence, the asymptotic description of $ \tau_\pm ({\bf k}) $ 
gives way to $ \tilde \tau_\pm ({\bf k}) $.
\end{itemize}

\begin{rem} \label{dididi} [On the determination of the complete dispersion relations] Keep in mind that extra dispersive effects 
may be induced by the the source terms which have been skipped in this subsection.  This is here illustrated by the difference between 
$ \tilde \tau_\pm $ and $ \tau_\pm $. More generally, the eigenvalues $ \lambda_\pm $ of (\ref{vpkappa}) only provide information 
on the (maximal possible) homogeneous behavior (of order $ 1 $) inherited by the speeds of propagation (for conveniently polarized waves). 
\end{rem}


\subsection{Moving solutions}\label{pannel5} Consider that $ d_i = 1 $ and $ d_e \ll 1 $. Let $ B_0 $ be a fixed constant 
magnetic field. In \cite{AY16,ALM16}, given $ {\bf k} \in \RR^3 $,
the authors seek plane wave solutions having the following  form (where, on condition that $ {\text B}_0 = 0 $, $ {\text B} $ and $ {\text v} $ are parallel)
 \begin{equation}\label{planewave}  
\quad {\text B}= {\text B}_0 + \mu_{\bf k}^\pm \ v^\pm_{\bf k} \ e^{i \, {\bf k} \cdot x + i \, \mu_{\bf k}^\pm \, ({\text B}_0 \cdot {\bf k}) \, t } , \qquad {\text v} = 
{\text v}^\pm_{\bf k} \ e^{i \, {\bf k} \cdot x + i \, \mu_{\bf k}^\pm \, ({\text B}_0 \cdot {\bf k}) \, t } , \qquad {\text v}^\pm_{\bf k} \in \RR^3 .
\end{equation}
By adjusting the value of $ \mu_{\bf k}^\pm $ adequately, they show that such solutions do exist. Observe that $ {\text B} = {\text B}_0 + \mu_{\bf k}^\pm 
\, {\text v} $, so that $ (\nabla \times {\text B}) \times (\nabla \times {\text v} ) = 0 $. This means that the choice (\ref{planewave}) has the effect of killing 
some nonlinearities and in fact, remarkably, all nonlinearities. 

\smallskip

\noindent The choice (\ref{planewave}) is to some extent the opposite of (\ref{vopposite}). This polarization eliminates the terms which are 
emphasized at the level of (\ref{syssimplinet}) or (\ref{syssimplinet2D}), those with $ d_e $ in factor. The dynamics induced by (\ref{planewave}) 
have nothing to do with inertial waves. Rather, they are tied to some extension of Alfv\'en waves.

\smallskip

\noindent For $ {\bf k} = k \, e_z $ with $ e_z = {}^t (0,0,1) $ and where $ k = \vert {\bf k} \vert \in \RR $ stands again for the angular wavenumber, 
we get a special type of waves with associated dispersion relation
\begin{equation}\label{disprelspe}  
\omega^\pm_{\bf k} = -  \mu_{\bf k}^\pm \, ({\text B}_0 \cdot {\bf k}) = \frac{-k}{1+d_e^2 \, k^2} \ \left \lbrack - \frac{k}{2} \pm \sqrt{\frac{k^2}{4} 
+ ( 1+d_e^2 \, k^2) } \ \right \rbrack \ ({\text B}_0 \cdot e_z) \, , 
\end{equation}
which clearly exhibits dispersive properties. In Section 3.2 of \cite{ALM16}, some comments are given about (\ref{disprelspe}), which 
corresponds to a generalization of the dispersion relation for shear Alfv\'en waves in ideal MHD. Since the $ \omega^\pm_{\bf k} $ remain
bounded, the role of electron inertia in this case is to impose a lower and upper bound on the time frequencies attainable. In contrast,
in the Hall framework, we get $ \omega^-_{\bf k} = k^2 \, (B_0 \cdot e_z) $ which rapidly diverges as the spatial wavenumber $ k $ tends 
to infinity. This means that the electron inertia has the effect on $ \mu_{\bf k}^- $ to cure singular behaviors at high wave numbers in Hall MHD. 
  

\section{Appendix} \label{Appendix}
 In Subsection \ref{listidentities}, we list some useful identities implying $ \nabla \times $.
In Subsection \ref{div-curlsystem}, we recall elliptic $ L^2 $-estimates concerning the div-curl system. These estimates have been exploited
to control the derivatives of $ v $ in terms of $ \nabla \times v $ and $ \nabla \cdot v $. 


\subsection{Identities involving the curl operator} \label{listidentities} Retain that
\begin{equation}\label{curlcurl}  
\nabla \times ( \nabla \times v) = \nabla ( \nabla \cdot v) - \Delta v \, , 
\end{equation}
We need to know that
\begin{equation}\label{followingidentity}   
\nabla \times (F \times G) = \bigl( (\nabla \cdot G) + G \cdot \nabla \bigl) F - \bigl( (\nabla \cdot F) + F \cdot \nabla \bigl) G \, . 
\end{equation}
Recall also that
\begin{equation}
  \label{VecIdent-1}
\begin{array}{rl}
 \nabla \times (F\cdot \nabla G) = \! \! \! & (F\cdot\nabla)\nabla \times G-((\nabla \times G)\cdot \nabla)F \\
 \ & \displaystyle + (\nabla \times G) (\nabla \cdot F) + \sum_{i=1}^3\nabla F_i \times \nabla G_i \,.
\end{array}
\end{equation}


\subsection{Elliptic estimates for the div-curl system} \label{div-curlsystem} Consider in $ \RR^3 $ the system
\begin{equation}\label{elldivcurl}  
\left \lbrace \begin{array}{l}
  \displaystyle \nabla \times v = w \, , \\
\displaystyle \nabla \cdot v = g \, ,
\end{array} \right. 
\end{equation}
where $ w $ and $ g $ are given data in $ L^2 $, whereas $ v $ is the unknown. From (\ref{curlcurl} ), it is easy to infer that
$$ \sum_{i,j}^3 \int_{\RR^3} \vert \part_i v_j \vert^2 \ dx =  \int_{\RR^3} \bigl( \vert \nabla \cdot v \vert^2 +  \vert \nabla \times v \vert^2 
\bigr) \ dx = \int_{\RR^3} \bigl( \vert w \vert^2 +  g^2 \bigr) \ dx \, . $$
The derivatives of $ v $ in $ L^2 $ are therefore controlled by the $ L^2 $-norms of $ w $ and $ g $. Using the Poincar\'e--Sobolev inequality 
(i.e. $\| v\|_{L^2(\RR^3)} \leq C \| \nabla v\|_{L^2(\RR^3)}$) we also  obtain the control of the $L^2$-norm of $v$; hence its $\cH^1$-norm.
Below, we formalize this well-known fact \cite{Wahl}. For the sake of completeness, we also give a more explicit proof of it.


\subsubsection{Link between the vorticity and the derivatives of a divergence free velocity} \label{linkwv} We start by manipulating 
solenoidal vector fields, belonging to $ \cD^s $. The link between $ w $ and $ v $ is then achieved through the Biot-Savart law. 
\begin{equation}\label{Biot-Savart}  
v = \nabla \times (-\Delta)^{-1} \, w \, . 
\end{equation}

\begin{lem} \label{elldpourw} [Continuity properties when passing from $ \nabla \times v $ to $ \part_i v $] Given $ w \in \cD^s $, there exists 
a unique solenoidal vector field $ v $ such that $ w = \nabla \times v $ in the distributional sense. Moreover, for all $ i \in \{1,2,3 \} $, the linear 
operator $ \cM_i^\mi : \cD^s \rightarrow \cD^s $ which sends $ w $ to $ \part_i v $ (with $ v $ as above) may be defined as a bounded matrix Fourier 
multiplier. It is therefore continuous for all $ s \in \RR $.
\end{lem}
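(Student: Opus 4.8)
The statement is the standard Biot--Savart estimate phrased on the Fourier side, so the plan is to write everything explicitly in frequency variables and read off the conclusion. First I would recall that on $\cD^s$ the Leray projector $\mathscr P$ acts as the identity, so a solenoidal $w$ with $\nabla\cdot w=0$ in particular satisfies $\xi\cdot\hat w(\xi)=0$ for a.e.\ $\xi$. Applying $\cF$ to the equation $\nabla\times v=w$ gives $i\,\xi\times\hat v(\xi)=\hat w(\xi)$; together with $\nabla\cdot v=0$, i.e.\ $\xi\cdot\hat v(\xi)=0$, this linear system for $\hat v(\xi)$ is uniquely solvable for $\xi\neq 0$. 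Explicitly, using the identity $\xi\times(\xi\times a)=(\xi\cdot a)\xi-|\xi|^2 a$, one solves
\begin{equation*}
\hat v(\xi)=\frac{-\,i\,\xi\times\hat w(\xi)}{|\xi|^2}\,,
\end{equation*}
which is exactly $\cF$ applied to $\nabla\times(-\Delta)^{-1}w$, i.e.\ the Biot--Savart law \eqref{Biot-Savart}. Existence and uniqueness (in the tempered-distribution / $\cH^s$ sense) follow because $|\xi|^{-1}\,\xi\times\hat w(\xi)$ is a well-defined $L^2_{\mathrm{loc}}$ function with the right growth: $|\hat v(\xi)|\le |\hat w(\xi)|/|\xi|$ near infinity and, using $\xi\cdot\hat w=0$ near the origin, no singularity is created beyond that already carried by $\langle\xi\rangle^s\hat w\in L^2$, so $v\in\cD^s$.

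**The multiplier $\cM_i^\mi$.** Next, differentiating, $\widehat{\partial_i v}(\xi)=i\,\xi_i\,\hat v(\xi)=\dfrac{\xi_i}{|\xi|^2}\,\bigl(\xi\times\hat w(\xi)\bigr)$. Hence $\cM_i^\mi$ is the Fourier multiplier with matrix-valued symbol
\begin{equation*}
M_i^\mi(\xi)\,\hat w:=\frac{\xi_i}{|\xi|^2}\,\bigl(\xi\times\hat w\bigr)\,,
\end{equation*}
which is homogeneous of degree $0$ in $\xi$ and smooth away from the origin; each entry is a quotient of a quadratic monomial in $\xi$ by $|\xi|^2$, hence bounded by $1$ in operator norm. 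Therefore $\cM_i^\mi$ is an $L^\infty$ Fourier multiplier, so by Plancherel it is bounded on $L^2=\cH^0$ and, since it commutes with $\langle D_x\rangle^s$, bounded on every $\cH^s$; restricted to $\cD^s$ it maps into $\cD^s$ because $\xi\cdot\widehat{\partial_i v}=i\xi_i(\xi\cdot\hat v)=0$. This proves continuity for all $s\in\RR$ and the asserted Fourier-multiplier description. For the sake of keeping the Appendix self-contained, I would also note the pointwise identity $\sum_i|M_i^\mi(\xi)|^2$ reduces to the curl identity $\sum_{i,j}\int|\partial_i v_j|^2=\int(|w|^2+|g|^2)$ already recorded just above the lemma, which gives the same $L^2$ bound without symbol calculus.

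**Main obstacle.** There is no deep difficulty here; the only point requiring a little care is the behavior at $\xi=0$, i.e.\ justifying that the formula $\hat v(\xi)=-i\,\xi\times\hat w(\xi)/|\xi|^2$ genuinely defines an element of $\cD^s$ rather than merely of some homogeneous space — this is where the hypothesis $w\in\cD^s$ (solenoidal, in the inhomogeneous $\cH^s$ scale, so $\hat w$ is locally square integrable and the singularity of $1/|\xi|^2$ is tamed by the extra factor $\xi\times\hat w$ vanishing to first order along $\xi$ when $\nabla\cdot w=0$ is used together with $\nabla\times$ giving a factor $\xi$) and uniqueness within solenoidal fields are used. I would state this briefly and then conclude. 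The boundedness of the symbol and commutation with $\langle D_x\rangle^s$ then close the argument in one line each.
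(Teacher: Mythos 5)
Your argument is essentially the paper's own proof: you solve $\nabla\times v=w$, $\nabla\cdot v=0$ on the Fourier side via the Biot--Savart formula and observe that $\widehat{\partial_i v}$ is obtained from $\hat w$ through a matrix-valued symbol homogeneous of degree $0$, hence a bounded Fourier multiplier on every $\cH^s$ which moreover preserves the divergence-free constraint, exactly as in the paper. The only caveat is your side remark that $v$ itself lies in $\cD^s$: the factor $\vert\xi\vert^{-1}$ is not removed by $\xi\cdot\hat w(\xi)=0$, and low-frequency behavior of $\hat w$ can prevent $v$ from being in $L^2$; but this claim is neither asserted in the lemma nor needed (only $\partial_i v\in\cD^s$ matters), so it does not affect the validity of your proof of the stated result.
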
 

\begin{proof} Fix any $ w \in \cD^s $. By Poincar\'e lemma, we can find some $ v $ such that $ w = \nabla \times v $ in the distributional sense. 
If we impose moreover $ \nabla \cdot v = 0 $, on the Fourier side, we have to deal with the explicit relation $ \hat v = \cF v = i \, \vert \xi \vert^{-2} \, 
\cF w \times \xi $, which furnishes 
$$ \widehat{\part_i v} = M_i^\mi (\xi) \, \cF w \, , \qquad M_i^\mi (\xi):= - \frac{\xi_i}{\vert \xi \vert^2} \, \left( \begin{array}{ccc}
0 & \xi_3 & - \xi_2 \\
- \xi_3 & 0 & \xi_1 \\
\xi_2 & - \xi_1 & 0 
\end{array} \right) . $$
It is clear that the matrix-valued function $ M_i^\mi $ is bounded on $ \RR^3 \setminus \{0\} $.
\end{proof}


\subsubsection{Link between $ (w,g) $ and the derivatives of $ v $} \label{linkwvc} The compressible version of Lemma \ref{elldpourw} is the 
following.

\begin{lem} \label{elldpourdivw} [Continuity properties when passing from the couple $ (\nabla \times v, \nabla \cdot v) $ to $ \part_i v $] Let 
$ g \in \cH^s (\RR^3;\RR) $ and $ w \in \cD^s $. There exists a unique $ v $ such that $ (\nabla \times v,\nabla \cdot v) = (g,w) $ in the distributional 
sense. Moreover, for all $ i \in \{1,2,3 \} $, the linear operator $ \cM_i^{\mc} : \cH^s \times \cH^s \rightarrow \cH^s $ which sends $ (g, w) $ to $ \part_i v $ (with 
$ v $ as above) may be defined as a bounded matrix Fourier multiplier. It is therefore continuous.
\end{lem}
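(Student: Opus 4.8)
The plan is to mirror the proof of Lemma~\ref{elldpourw}, passing to the Fourier side and solving the algebraic div--curl system \eqref{elldivcurl} pointwise in $\xi$. Writing $v$ for a solution, so that $i\,\xi\cdot\hat v = \hat g$ and $i\,\xi\times\hat v = \hat w$, I would first decompose $\hat v(\xi)$ for $\xi\neq 0$ into its part parallel to $\xi$ and its part orthogonal to $\xi$. The divergence equation fixes the parallel part, $\frac{(\xi\cdot\hat v)}{|\xi|^2}\,\xi = \frac{-i\,\hat g}{|\xi|^2}\,\xi$, while crossing the curl equation with $\xi$ and using $\xi\times(\xi\times\hat v) = \xi(\xi\cdot\hat v) - |\xi|^2\hat v$ recovers the orthogonal part, $\hat v_\perp = \frac{i}{|\xi|^2}\,\xi\times\hat w$. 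The consistency of the curl equation forces $\xi\cdot\hat w = 0$, i.e.\ $\nabla\cdot w = 0$, which is exactly why the hypothesis $w\in\cD^s$ is imposed. Altogether
\[
\hat v(\xi) = -\,\frac{i\,\hat g(\xi)}{|\xi|^2}\,\xi \;+\; \frac{i}{|\xi|^2}\,\xi\times\hat w(\xi)\,,\qquad \xi\neq 0\,.
\]

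Differentiating gives $\widehat{\partial_i v} = i\,\xi_i\,\hat v = \frac{\xi_i\,\xi}{|\xi|^2}\,\hat g - \frac{\xi_i}{|\xi|^2}\,\xi\times\hat w$, which exhibits $\partial_i v$ as the image of the pair $(g,w)$ under a matrix-valued Fourier multiplier $\cM_i^{\mc}(\xi)$. Its action on the $g$--slot is the Riesz-type symbol $\xi\mapsto\xi_i\xi/|\xi|^2$, whose entries $\xi_i\xi_j/|\xi|^2$ are bounded by $1$; its action on the $w$--slot is, up to sign, the matrix $M_i^{\mi}(\xi)$ of Lemma~\ref{elldpourw}, likewise bounded on $\RR^3\setminus\{0\}$. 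Hence $\cM_i^{\mc}$ is a bounded matrix-valued function on $\RR^3\setminus\{0\}$, so $\cM_i^{\mc}=\cM_i^{\mc}(D_x)$ maps $\cH^s\times\cH^s$ continuously into $\cH^s$ for every $s\in\RR$, which is the continuity statement. For existence, one defines $v$ by the displayed formula (legitimate because $w\in\cD^s$ makes the curl equation solvable), checks $\nabla v\in\cH^s$ by the multiplier bound, and controls $v$ itself through the elliptic identity $\sum_{i,j}\|\partial_i v_j\|_{L^2}^2 = \|\nabla\cdot v\|_{L^2}^2 + \|\nabla\times v\|_{L^2}^2$ (a consequence of \eqref{curlcurl}) together with the Poincar\'e--Sobolev inequality, both already recorded in Subsection~\ref{div-curlsystem}. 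For uniqueness, the difference of two solutions has vanishing curl and divergence, so its Fourier transform is supported at the origin and it is a polynomial; the requirement $\nabla v\in\cH^s$ together with decay at infinity then forces the difference to be $0$.

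The main obstacle is not the algebra, which is essentially the same as for Lemma~\ref{elldpourw}, but the low-frequency behaviour: the symbol recovering $v$ itself carries a $|\xi|^{-1}$ factor, so one must be careful about the class in which $v$ (as opposed to $\nabla v$) is sought and is unique. I would handle this precisely as in Subsection~\ref{div-curlsystem}, controlling $\nabla v$ directly in $L^2$ (hence in $\cH^s$ by the multiplier estimate) and invoking the Poincar\'e--Sobolev bound to close the estimate on $v$, so that the existence and uniqueness assertions are to be read in that class. Everything else is routine bookkeeping with bounded Fourier multipliers.
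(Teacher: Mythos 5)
Your proposal is correct and follows essentially the same route as the paper: solve the div--curl system algebraically on the Fourier side and observe that $\widehat{\partial_i v}$ is given by applying to $(\hat g,\hat w)$ a matrix symbol with entries of the form $\xi_i\xi_j/|\xi|^2$ and $\xi_i(\xi\times\cdot)_j/|\xi|^2$, bounded on $\RR^3\setminus\{0\}$, hence a continuous multiplier on $\cH^s\times\cH^s$. The extra care you take with low frequencies, existence and uniqueness (zero curl and divergence forcing a decaying polynomial to vanish) only fleshes out what the paper leaves implicit, and the sign difference with the paper's displayed symbol is a harmless matter of Fourier convention.
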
 

\begin{proof} By construction, we have
$$ \cM_i^{\mc} \, (g,w) = \cF^{-1} \bigg( -\frac{\xi_i}{\vert \xi \vert^2} \ ( \hat g \ \xi  - \xi \times \hat w ) \bigg) \, , $$
which is sufficient to conclude.
\end{proof}

\vspace{-0.13cm}

\end{document}